\definecolor{Chocolat}{rgb}{0.36, 0.2, 0.09}
\definecolor{BleuTresFonce}{rgb}{0.215, 0.215, 0.36}
\tikzset{>=latex}
\providecommand\@dotsep{5}
\renewcommand{\listoftodos}[1][\@todonotes@todolistname]{%
  \@starttoc{tdo}{#1}}
\newtheorem*{theorem*}{Theorem}
\newtheorem{theorem}{Theorem}[section]
\newtheorem{lemma}[theorem]{Lemma}
\newtheorem{proposition}[theorem]{Proposition}
\theoremstyle{definition}  
\newtheorem{definition}[theorem]{Definition}
\newtheorem{example}[theorem]{Example}
\newtheorem{conjecture}[theorem]{Conjecture}  
\newtheorem{remark}[theorem]{Remark}
\newcommand{\<}{\langle} 
\renewcommand{\>}{\rangle}
\newcommand{\on}{\operatorname}
\newcommand{\kk}{\mathbf{k}}
\newcommand{\Z}{{\mathbb Z}}
\newcommand{\Q}{{\mathbb Q}}
\newcommand{\C}{{\mathbb C}}
\newcommand{\RR}{{\mathbb{R}}}
\newcommand{\KK}{\mathbf{k}}
\newcommand{\Pa}{\mathbf{Pa}}
\newcommand{\PaB}{\mathbf{PaB}}
\newcommand{\PaCD}{\mathbf{PaCD}}
\renewcommand{\CD}{\mathbf{CD}}
\renewcommand{\t}{{\mathfrak{t}}}
\newcommand{\h}{{\mathfrak{h}}}
\newcommand{\f}{{\mathfrak{f}}}
\renewcommand{\i}{\on{i}}
\newcommand{\PB}{\on{PB}}
\newcommand{\zz}{\mathbf{z}}
\newcommand{\Assoc}{\mathbf{Ass}}
\newcommand{\Ass}{\mathbf{Ass}}
\newcommand{\Ell}{\mathbf{Ell}}
\newcommand{\GT}{\mathbf{GT}}
\newcommand{\GRT}{\mathbf{GRT}}
\newcommand{\cO}{{\mathcal O}}
\newcommand{\cA}{{\mathcal A}}
\newcommand{\cF}{{\mathcal F}}
\newcommand{\CoB}{\tmmathbf{\tmop{CoB}}}
\newcommand{\B}{\on{B}}
\newcommand{\Conf}{\on{Conf}}
\newcommand{\Conff}{\on{Conf}^f}
\tikzstyle cross=[preaction={draw=white, -, line width=4pt}, thick]
\tikzstyle normal=[thick]
\tikzstyle chord=[densely dotted, thick]
\tikzstyle zero=[ultra thick, gray]
\tikzstyle zell=[ultra thick, white]
\tikzstyle zerocross=[preaction={draw=white, -, line width=4pt}, ultra thick, gray]
\tikzstyle point=[draw,circle,inner sep=1,fill=black]
\tikzstyle petitpoint=[draw,circle,inner sep=0.3,fill=black]
\newcommand{\straight}[3][-]{\draw[normal,#1] (#2,-#3) -- (#2,-#3-1);}
\newcommand{\hori}[4][-]{\draw[normal,#1] (#2,-#3)--(#2,-#3-1);\draw[normal,#1] (#2+#4,-#3)--(#2+#4,-#3-1);\draw[chord] (#2,-#3-0.5)--(#2+#4,-#3-0.5);}
\newcommand{\tell}[4][-]{\draw[zell] (0,-#2)--(0,-#2-1);\draw[normal,#1] (#3,-#2)--(#3,-#2-1);\draw[chord] (0,-#2-0.5)--(#3,-#2-0.5); \node[point,label=left:$#4$] at (0,-#2-0.5) {};}
\newcommand{\tik}[1]{\begin{tikzpicture}[baseline=(current bounding box.center)] #1 \end{tikzpicture} }
\selectfont\symbol{60}\fontencoding{\encodingdefault}}
\selectfont\symbol{62}\fontencoding{\encodingdefault}}
\newcommand{\assign}{:=}
\newcommand{\tmmathbf}[1]{\ensuremath{\boldsymbol{#1}}}
\newcommand{\mathbbm}{\mathbb}
\newcommand{\tmop}[1]{\ensuremath{\operatorname{#1}}}
\newcommand{\tmtextit}[1]{{\itshape{#1}}}
\title{Surface Drinfeld torsors I : Higher genus associators}
\author{Martin Gonzalez}
\address{Martin GONZALEZ \newline \indent I2M, Universit\'e 
d'Aix-Marseille, 39, rue F. Joliot Curie, 13453, Marseille, France.}
\email{martin.gonzalez@univ-amu.fr}
\begin{document}

\begin{abstract} 
We develop a higher genus version of Drinfeld associators by means of operad theory. 
We start by introducing a framed version of rational associators and Grothendieck--Teichm\"uller 
groups and show that their definition is independent of the framing data. Next, we define 
a framed version of the universal KZ connection and we use it to show that over 
the complex numbers, the rational framed Drinfeld torsor is not empty. Next, we 
concentrate on the higher genus version of this story. We define an operad module of 
framed parenthesized higher genus braidings in prounipotent groupoids and we define its chord diagram
counterpart. We then use these operadic modules to operadicly define higher genus associators and 
Grothendieck--Teichm\"uller groups, which again do not depend on the framing data. 
Finally, we compare our results in the genus $1$ case with those appearing in the litterature.
\end{abstract}

\maketitle

\setcounter{tocdepth}{2}

\tableofcontents

\section*{Introduction}

This article is the first of a series devoted to the study of surface analogs 
of the so-called rational  \textit{Drinfeld torsor}
 which consist, for each $\Q$-ring $\kk$, on the 
 (bi-)torsor $(\on{Ass}(\kk),\widehat{\on{GT}}(\kk), \on{GRT}(\kk))$ where 
$\on{Ass}(\kk)$ is the set of Drinfeld $\kk$-associators acted upon by the $\kk$-prounipotent 
Grothendieck--Teichm\"uller 
group $\widehat{\on{GT}}(\kk)$ and its graded version $\on{GRT}(\kk)$. 
Different authors \cite{DrGal, En2, CEE, En} already constructed analogs of
Drinfeld torsors in the cyclotomic and elliptic cases 
and operadic descriptions of these torsors have become available 
recently in the mentioned cases \cite{Fresse, CaGo2, CaGo3} and 
the twisted elliptic case \cite{CaGo2}.
In this article we will concentrate on the first part of this program. Namely, we 
develop the operadic construction of this torsor in the framed higher 
genus context. A second paper will be devoted to the study of
Drinfeld torsors associated to orbit configuration 
spaces which are finite (possibly ramified) covers over framed configuration spaces of points on 
oriented surfaces.

One motivation for studying them is that Drinfeld torsors consist on a somehow 
``useful fiction'' for finding interesting families of relations for analogs of multiple 
zeta values associated to a wide range of complex curves, 
through the study of algebraic relations 
satisfied by the monodromy of the flat universal KZB connection associated to the 
curve as such monodromy has proven to produce a $\C$-point in the set of Drinfeld associators 
associated to the curve in the already know cases.

Initially, Grothendieck-Teichm\"uller groups and associators were, in the genus 0, 
cyclotomic and genus 1 cases, constructed by using braided monoidal categories, 
braided modules categories and elliptic structures over braided monoidal categories 
respectively. Already in V. Drinfeld's work, 
associators had an implicit operadic nature (made explicit in {\cite{BN}})
which permits to define associators as formality isomorphisms between operads
closely related to the little disks operad $\on{D}_2$. More specifically,  for $\kk$ a $\Q$-ring, 
there are operads in $\kk$-prounipotent groupoids $\widehat{\PaB}(\kk)$, 
encapsulating the combinatorics of parenthesized braidings, and $G\PaCD(\kk)$, 
encapsulating the combinatorics of parenthesized chord diagrams. The former is 
obtained (roughly) by considering a parenthesized groupoid version of 
the pure braid group. The latter is obtained from the so-called 
Kohno-Drinfeld Lie $(\kk)$-algebras $\t_n(\kk)$. In this scope, 
the $\kk$-prounipotent Grothendieck-Teichm\"uller 
group consists on the group of automorphisms of $\widehat{\PaB}(\kk)$ which 
are the identity on objects, the graded Grothendieck-Teichm\"uller group is the 
group of automorphisms of $G\PaCD(\kk)$ which are the identity on objects, and 
the set of $\kk$-associators consists on the set of isomorphisms 
$\widehat{\PaB}(\kk)\longrightarrow G\PaCD(\kk)$ of operads in $\kk$-prounipotent 
groupoids which are the identity on objects. It can be shown that these operadic point 
of view is compatible with the classic one, namely that there is a one-to-one 
correspondence between the operadic definition of these objects and the objects 
defined in the literature in terms of elements satisfying certain equations.
Let us also mention that in \cite{Fresse}, B. Fresse developped very powerful 
tools to study rational 
homotopy theory of operads in order to understand, from a homotopical viewpoint, 
a deep relationship between operads and Grothendieck-Teichm\"uller groups which 
was first foreseen by M. Kontsevich in his work on deformation quantization process 
in mathematical physics. 

\medskip

Let us explain the general approach for constructing Drinfeld torsors 
in the framed higher genus context.

Let $n \geqslant 1$ and  let $M$ be a closed smooth manifold of dimension $2$.
Consider the configuration space of $M$
\[ \tmop{Conf} (M, n) = \{  \tmmathbf{x}=(x_1, \ldots, x_n) \in M^n ; x_i \neq x_j \tmop{if}
   i \neq j \} . \]
The spaces $\tmop{Conf} (M, n)$ are weakly equivalent to their
Axelrod--Singer--Fulton--MacPherson (ASFM) compactification $
\overline{\Conf}(M, n)$. These spaces are acted on by the symmetric group 
$\mathfrak{S}_n$ by relabelling the marked points and the collection 
$\overline{\Conf}(M, -):=\{\overline{\Conf}(M, n)\}_{n\geq0}$ is actually 
a $\mathfrak{S}$-module.
When $M$ is parallelizable, $\overline{\Conf}(M, -)$ forms a right
$\overline{\on{C}}(\RR^2,-)$-module $\overline{\Conf}(M, -)$. Otherwise, in order to 
endow $\overline{\Conf}(M, -)$ with a well defined operadic structure, we need to introduce 
framed versions of the above considered configuration spaces. This consists on setting a choice 
of trivialization of the tangent bundle of $M$ in order to specify in which direction 
we will insert the disks on $M$ constructed by the ASFM compactification.

Let $M$ be a Riemannian closed oriented\footnote{In the case of non-oriented 
manifolds one can only consider the bundle projection $\tmop{O} (M) \rightarrow M$.} 
compact $2$-manifold and consider the
bundle projection $\pi_M : \tmop{SO} (M) \rightarrow M$, where $\tmop{SO} (M)$
is the principal $\tmop{GL}_2$-bundle of special orthogonal linear frames on
$M$.
The framed configuration space $\Conf^f (M, n)$ of $n$ distinct points in $M$ is
  \[ \Conf^f (M, n) := \{ (\tmmathbf{x},
     f_1, \ldots, f_n) \in \tmop{Conf} (M, n) \times \tmop{SO} (M)^{\times n}
     | f_i \in \pi_M^{- 1} (x_i) \} . \]
This is the same to define $\Conf^f (M, n)$ as
the pullback of the diagram
$$ \xymatrix{ 
   & \on{SO}(M)^{\times n} \ar[d] \\
   \on{Conf}(M,n)\ar[r] & M^{\times n}   
   } 
$$
so $\Conf^f (M, n) \longrightarrow \tmop{Conf}
(M, n)$ is a principal $\tmop{SO} (2)^{\times n}$-bundle. If $M$ is
parallelizable, then $\Conf^f (M, n)$ is isomorphic to
$\tmop{Conf} (M, n) \times \tmop{SO} (2)^{\times n}$. For instance, this is the case when $M
=\mathbbm{R}^2$ (by considering its reduced version) or when $M=\mathbb{T}$.  

Let $\overline{\on{C}}^f(\RR^2,n)$
be the ASFM compactification of $
\on{C}^f(\RR^2,n)= \tmop{C} (\RR^2, n) \times \tmop{SO} (2)^{\times n}$. Now, if $M$ is 
an oriented $2$-manifold, then the collection of its framed
ASFM compactifications forms a right $
\overline{\on{C}}^f(\RR^2,-)$-module denoted $ \overline{\Conf}^f(M, -)$ where each space
$\overline{\Conf}^f(M, n)$ is a principal $\tmop{SO}
(2)^{\times n}$-bundle over $\overline{\Conf}(M, n)$.

In general, if $M=\Sigma_g$ has genus $g$, the $\mathfrak{S}$-module
$\on{D}^f_{2, g}$ of framed little 2-disks on $\Sigma_g$ can be endowed with a 
well-defined operadic module structure over the framed
little 2-disks operad $\on{D}^f_{2}$. 
In particular, if $g = 1$, as $\mathbbm{T}$ is pararellizable so each space
$\on{D}^f_{2,1} (n)$ is isomorphic to $\on{D}_{2,1} (n) \times \tmop{SO}
(2)^{\times n}$.

Then we also have
\begin{equation*}
\xymatrix{
\on{D}^f_M(n) \ar[r]^-{\simeq} \ar[d] & \Conf^f(M,n) & \ar[l]_-{\simeq} \overline{\Conf}^f(M, n) \ar[d] \\
\on{D}_M(n) \ar[r]^-{\simeq} & \on{Conf}(M,n)  & \ar[l]_-{\simeq} \overline{\Conf}(M, n)
}
\end{equation*}
where again the horizontal maps are $\mathfrak{S}_n$-equivariant homotopy
equivalences.

 If $M$ is parallelizable, then the semi-direct product in the
below spaces becomes an usual product and we get a square of $\mathfrak{S}$-modules
\begin{equation*}
\xymatrix{
\on{D}^f_M  \ar[d] & \ar[l]_-{\simeq} \overline{\Conf}^f(M, -) \ar[d] \\
\on{D}_M & \ar[l]_-{\simeq} \overline{\Conf}(M, -)
}
\end{equation*}
If $M$ is not
parallelizable the second line of this square doesn't enhance into an operadic 
module morphism but we still have a weak equivalence
$\overline{\Conf}^f(M, -) \overset{\simeq}{\longrightarrow}  \on{D}^f_M$
of modules over $\overline{\on{C}}^f(\RR^2, -)\overset{\simeq}{\longrightarrow}\on{D}^f_2 
$.

\medskip

\noindent\textbf{Plan of the paper.}
After briefly recalling the categorical and operadical language we use to define 
rigourously rational Drinfeld torsors in Section \ref{Preliminaries}, we introduce in 
Section \ref{framed associators} a full suboperad $\PaB^f \subset \pi_1 (\on{D}^f_{2})$ 
of framed parenthesized braidings. 
We do so by restricting the object
sets of the groupoid so that $B (\PaB^f)
\overset{\sim}{\longrightarrow} B (\pi_1(\on{D}^f_{2}))$. We then use a framed version 
of the Kohno-Drinfeld Lie $\kk$-algebra and construct an
operad $G\PaCD^f(\kk)$ of parenthesized framed (group-like) horizontal chord 
diagrams. These two operads will allow us to operadicly define the bitorsor consisting of framed 
associators and framed Grothendieck--Teichm\"uller groups. After showing that 
such torsor is not empty over $\C$ by means of the monodromy of a framed 
version of the universal KZ connection, we show that this torsor is isomorphic 
to the unframed rational Drinfeld torsor. As an application of this, we relate 
associators and Grothendieck--Teichm\"uller groups to the rational homotopy 
theory of the framed little disks operad.

We then turn in Section \ref{g framed associators} to the genus $g$ 
situation and we introduce a full submodule
$\PaB^f_g \subset \pi_1 (\on{D}^f_{2,g})$ of genus $g$ framed 
parenthesized braidings by restricting the object
sets of the groupoid so that $B (\PaB^f_g)
\overset{\sim}{\longrightarrow} B (\pi_1(\on{D}^f_{2,g}))$ and we give a presentation 
of this operadic module by using the presentation of surface frame braids from \cite{BeG}.
We then construct a framed version $\t_{g,n}^f(\kk)$ of the genus $g$ Kohno-Drinfeld 
Lie $\kk$-algebra contained in \cite{En4} and use it to define a $G\PaCD^f(\kk)$-module 
$G\PaCD^f_g(\kk)$ of genus $g$ parenthesized (group-like) framed chord diagrams.

The main result of this article is then the following

\begin{theorem*}[Theorem \ref{Assg}]
There is an isomorphism between the following two sets:
\begin{itemize}
\item  the set $\mathbf{A}\mathbf{s}\mathbf{s}^f_g (\kk)$
   of couples $(F,G)$, where
  $F$ is an operad isomorphism $\widehat{\PaB}^f(\kk) \to G\PaCD^f(\kk)$ 
  and $G$ is an isomorphism between
  the $\widehat{\PaB}^f(\kk)$-module
  $\widehat{\PaB}^f_g(\kk)$ and the
  $G\mathbf{P}\mathbf{a}\mathbf{C}\mathbf{D}^f(\kk)$-module
  $G\mathbf{P}\mathbf{a}\mathbf{C}\mathbf{D}^f_g(\kk)$ which is the
  identity on objects and which is compatible with $F$; 
\item the set $\on{Ass}_g(\kk)$ consisting on tuples $(\mu,
 \varphi,A^{1,2}_{1,\pm}, \ldots, A^{1,2}_{g,\pm})$ where $(\mu,
 \varphi) \in \on{Ass}(\kk)$ and $ A^{1,2}_{a,\pm} \in \exp
  (\hat{\mathfrak{t}}^f_{g, 2}(\kk))$, for $a=1,...,g$, satisfying equations 
  \eqref{def:g:ass:0}, \eqref{def:g:ass:1}, \eqref{def:g:ass:2}, \eqref{def:g:ass:3} and \eqref{def:g:ass:4}.
\end{itemize}
\end{theorem*}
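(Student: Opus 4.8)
The plan is to reduce both the operadic datum and the module datum to their values on a finite generating set, using the presentations of the operad $\widehat{\PaB}^f(\kk)$ and of the operadic module $\widehat{\PaB}^f_g(\kk)$, and then to match the defining relations against the five listed equations. First I would treat the operad part. An operad isomorphism $F\colon\widehat{\PaB}^f(\kk)\to G\PaCD^f(\kk)$ that is the identity on objects is determined by its values on the elementary morphisms generating $\widehat{\PaB}^f(\kk)$ (the associativity constraint, the braiding, and the framing twist), and the relations among these generators are exactly the pentagon, the two hexagons, and the framing relations. Hence the already-established equivalence in the framed genus $0$ situation identifies the datum of $F$ with a pair $(\mu,\varphi)\in\on{Ass}(\kk)$, the framing coordinate being absorbed by the result that the framed associator set is independent of the framing data. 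This fixes the first two coordinates of the target tuple.

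Next I would analyse the module isomorphism $G$. Using the presentation of surface framed braids from \cite{BeG}, the $\widehat{\PaB}^f(\kk)$-module $\widehat{\PaB}^f_g(\kk)$ is generated, over the operad, by the $2g$ loop morphisms $A_{a,\pm}$ ($a=1,\dots,g$) encoding the handles of $\Sigma_g$, subject to relations describing how each loop is conjugated by a braiding, how loops attached to distinct handles commute, and the single global surface relation. Because $G$ is required to be the identity on objects and each $A_{a,\pm}$ is an automorphism of a fixed object, its image $A^{1,2}_{a,\pm}:=G(A_{a,\pm})$ must lie in the corresponding automorphism group of $G\PaCD^f_g(\kk)$, namely the prounipotent group $\exp(\hat{\t}^f_{g,2}(\kk))$. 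Thus $G$, being compatible with $F$ and determined on generators, is encoded precisely by the tuple $(A^{1,2}_{1,\pm},\dots,A^{1,2}_{g,\pm})$.

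The core of the argument is then to show that this assignment on generators extends to a well-defined module isomorphism compatible with $F$ if and only if the elements $A^{1,2}_{a,\pm}$ satisfy equations \eqref{def:g:ass:0}--\eqref{def:g:ass:4}. In one direction, applying $G$ to each module relation of the \cite{BeG} presentation and rewriting the outcome through $F=(\mu,\varphi)$ produces exactly one of the listed equations; conversely, if all five equations hold, the universal property of the presentation guarantees that the assignment descends to the quotient and yields an isomorphism $G$ compatible with $F$. Assembling the two analyses gives mutually inverse maps $(F,G)\mapsto(\mu,\varphi,A^{1,2}_{a,\pm})$ and back, hence the desired bijection.

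The main obstacle I anticipate is the faithful translation of the topological relations of the surface braid module into Lie-algebraic identities in $\hat{\t}^f_{g,2}(\kk)$: in particular, transporting the braiding-conjugation relations and the global surface relation $\prod_{a}[A_{a,+},A_{a,-}]=(\cdots)$ through the associator $\varphi$ without gaining or losing constraints, and verifying that the framing generator interacts correctly so that exactly the five equations — and no more — are recovered. Keeping the mixed operad/module compatibility bookkeeping straight, and confirming that the images genuinely land in $\exp(\hat{\t}^f_{g,2}(\kk))$ rather than in a larger hom-set, is where the delicate work lies.
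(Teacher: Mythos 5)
Your proposal follows essentially the same route as the paper: reduce $F$ to $(\mu,\varphi)$ via the framed genus~$0$ equivalence, use the presentation of $\widehat{\PaB}^f_g(\kk)$ (Theorem \ref{Thm:PaBfg}) to encode $G$ by the images of the generators $A_a^{1,2},B_a^{1,2}$ in $\exp(\hat{\mathfrak t}^f_{g,2}(\kk))$, and match the five presentation relations \eqref{eqn:gR}, \eqref{eqn:gD}, \eqref{eqn:gN}, \eqref{eqn:gE1}, \eqref{eqn:gE2} with equations \eqref{def:g:ass:0}--\eqref{def:g:ass:4}. The only point worth flagging is that the claim that the resulting morphism is an \emph{isomorphism} (not just a morphism) rests on the formality statement $\widehat{\on{PB}}^f_{g,n}(\kk)\simeq\exp(\hat{\mathfrak t}^f_{g,n}(\kk))$, which the paper records separately just before the theorem.
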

Next, we operadicly define genus $g$ (graded) Grothendieck--Teichm\"uller groups, 
extract from them descriptions \textit{à la Drinfeld}.
We finish this section by making a conjecture on the existence of a 
genus $g$ $\C$-associator by means of a yet-to-be-defined framed extension of the genus $g$ 
universal KZB connection which was constructed in \cite{En4}.

Finally, in Section \ref{genus 1 associators} we compare different genus 1 
version of the Drinfeld torsor associated to configuration spaces of the 2-torus, 
depending on the framed/unframed and reduced/non-reduced versions of it.

\medskip

It should be interesting to relate the Lie algebra of our genus $g$ graded Grothendieck--Teichm\"uller 
group to the higher genus Kashiwara--Vergne Lie algebra $\mathfrak{krv}^{(g,n+1)}$ 
which is being studied in the recent work \cite{KVg}.
Finally, we should point out that the recent paper \cite{CIW} makes a complementary construction 
of higher genus associators which intersects ours in the genus 0 case. It should be interesting to 
link their construction and ours via the study of higher genus version of the Arnold-Kohno 
isomorphism $\kappa_n : \on{C}^*_{\on{CE}}(\t_n)  \longrightarrow  \on{H}^*(\on{C}(\C,n))$.

\medskip

\noindent\textbf{Acknowledgements.} The author is grateful to Damien Calaque, Benjamin 
Enriquez, Benoit Fresse and Geoffroy Horel for numerous conversations and 
suggestions about this project.
This paper is grew as a part of the author's doctoral thesis at Sorbonne Universit\'e, 
and part of this work has been done while the author was visiting the 
\textit{Institut Montpelli\'erain Alexander Grothendieck}, thanks to the financial 
support of the \textit{Institut Universitaire de France}. The author warmly thanks the 
Max-Planck Institute of Mathematics in Bonn, and Université d'Aix-Marseille for its hospitality 
and excellent working conditions that made it possible to finish this work.

\section{Preliminaries on rational Drinfeld torsors}
\label{Preliminaries}
In this section we fix the categorical and operadical notation that will be used 
later on and which is used consistently in \cite{CaGo2}. 
We also recall the definitions 
and results concerning the operadicly defined rational Drinfeld 
torsor which is the triple consisting of the
Grothendieck-Teichm\"uller groups, its graded version and the
 set  rational Drinfeld associators. Let $\kk$ be a $\Q$-ring.

\subsection{Pointed modules over an operad}
Consider a symmetric monoidal category $(\mathcal C,\otimes,\mathbf{1})$ 
having small colimits and such that $\otimes$ commutes with these. 
We make use of the following notation (we refer the reader to \cite{CaGo2} for further details):
\begin{itemize}
\item Let $\mathfrak S$-mod be the category of $\mathfrak S$-modules 
in $\mathcal C$, endowed with
\begin{itemize}
\item the 
symmetric monoidal product $\otimes$ defined by 
$$
(S\otimes T)(n):=\coprod_{p+q=n}\left(S(p)\otimes T(q)\right)_{\mathfrak S_p
\times\mathfrak S_q}^{\mathfrak S_n}\,,
$$
where, for each group inclusion $H\subset G$, $(-)_H^G$ is left adjoint 
to the restriction functor 
from the category of objects carrying a $G$-action to the category of 
objects carrying an $H$-action;
\item the monoidal unit defined by 
$$
\mathbf{1}_{\otimes}(n):=
\begin{cases} 
\mathbf{1} & \mathrm{if}~n=0. \\
\emptyset & \mathrm{else}
\end{cases}
$$
\end{itemize}
\item An \textit{operad in $\mathcal C$} is a unital monoid in 
$(\mathfrak S\textrm{-mod},\circ,\mathbf{1}_{\circ})$, where 
\begin{itemize}
\item $\circ$ is the (non-symmetric) monoidal product $\circ$ on $\mathfrak S$-mod defined by
$$
(S\circ T)(n):=\coprod_{k\geq0}T(k)\underset{\mathfrak S_k}{\otimes}\left(S^{\otimes k}(n)\right)\,.
$$
\item the monoidal unit $\mathbf{1}_{\circ}$ for $\circ$ is given by 
$$
\mathbf{1}_{\circ}(n):=
\begin{cases} 
\mathbf{1} & \mathrm{if}~n=1 \\
\emptyset & \mathrm{else}
\end{cases}.
$$ 
\end{itemize}
The category of operads in $\mathcal C$ will be denoted $\tmop{Op}\mathcal C$.
\item A \textit{module over an operad $\mathcal O$} (in $\mathcal C$) is a 
left $\mathcal O$-module in $(\mathfrak S\textrm{-mod},\circ,\mathbf{1}_{\circ})$. 
\item All of our operads (resp. operad modules) will be pointed in the sense of 
\cite[Subsection 1.8]{CG}. In particular, there are operations that decrease the 
arity and, in the case of modules, we have a distinguished morphism 
$\mathcal O\to\mathcal P$ of $\mathfrak{S}$-modules. 
\item Let $\mathcal{P} \to \mathcal{Q}$ be a morphism between operads in 
$\mathcal{C}$, let $\mathcal{M}$ 
be a module over $\mathcal{P}$, and let $\mathcal{N}$ be a module over 
$\mathcal{Q}$. Operadic module mophisms $\mathcal{M} \to \mathcal{N}$ 
are considered to lie in the category of $\mathcal{P}$-modules 
(\textit{via} the restriction functor), and will simply be refered to as module 
morphisms. 
\item We write $\tmop{OpR}\mathcal C$ for the category of 
pairs $(\mathcal P,\mathcal M)$, where $\mathcal P$ 
is an operad and $\mathcal M$ is a right $\mathcal O$-module, in $\mathcal C$. 
A morphism $(\mathcal P,\mathcal M)\to (\mathcal Q,\mathcal N)$ in $\tmop{OpR}\mathcal C$ is a pair $(f,g)$, where 
$f:\mathcal{P} \to \mathcal{Q}$ is a morphism between operads and $g:\mathcal{M} \to \mathcal{N}$ is a morphism 
of $\mathcal{P}$-modules. 
\item Let $\mathcal P, \mathcal Q$ be two operads (resp.~modules) in groupoids. 
If we are given a morphism $f:\on{Ob}(\mathcal P)\to\on{Ob}(\mathcal Q)$ between 
the operads (resp. operad modules) of objects of $\mathcal P$ and $\mathcal Q$, 
then (following \cite{Fresse}), the \textit{fake pull-back} operad 
(resp. operad module) $ f^\star \mathcal Q$ is defined by 
\begin{itemize}
\item $\on{Ob}(f^\star \mathcal Q) := \on{Ob}(\mathcal P)$,
\item $\on{Hom}_{(f^\star \mathcal Q)(n)}(p,q):=\on{Hom}_{\mathcal Q(n)}(f(p),f(q))$.
\end{itemize}
\item We denote by $\bf{CoAlg_{\KK}}$ the 
symmetric monoidal category of complete filtered topological coassociative 
cocommutative counital $\KK$-coalgebras, with monoidal product given by 
the completed tensor product $\hat{\otimes}_{\KK}$ over $\KK$. 
\item Let $\bf{Cat(CoAlg_{\kk})}$ be the symmetric monoidal category of 
small $\bf{CoAlg_{\KK}}$-enriched categories, with symmetric monoidal 
product $\otimes$ given by 
\begin{itemize}
\item $\on{Ob}(C \otimes C'):=\on{Ob}(C) \times \on{Ob}(C')$,
\item $\on{Hom}_{C \otimes C'}\big((c,c'),(d,d')\big):=
\on{Hom}_{C}(c,d) \hat\otimes_{\KK} \on{Hom}_{C'}(c',d')$. 
\end{itemize}
\item Let $grLie_\KK$ be the category of positively graded finite dimensional 
Lie $\kk$-algebras, with symmetric monoidal strucure is given by the direct sum $\oplus$. 
There is a lax symmetric monoidal functor 
$$
\hat{\mathcal{U}}:grLie_\kk\longrightarrow \mathbf{Cat(CoAlg_\KK)}
$$
sending a positively graded Lie algebra to the degree completion of its universal 
envelopping algebra, which is a complete filtered cocommutative Hopf algebra 
when viewed as a $\mathbf{CoAlg_\KK}$-enriched category with only one object. 
\item There is a functor that goes  from the category of surjective morphisms
 $G\to S$ with finitely generated kernel and with $S$ a finite group to the 
 category of groupoids. It sends  $\varphi:G\to S$ to the groupoid 
 $\mathcal{G}(\varphi)$ defined by $\on{Ob}(\mathcal{G}(\varphi))=S$ 
 and, for $s,s'\in S$, 
\[
\mathrm{Hom}_{\mathcal{G}(\varphi)}(s,s'):=\{g\in G|\varphi{g}=s^{-1}s'\}
\]
with multiplication of arrows in $\mathcal{G}(\varphi)$ identical to the one in $G$. 
\item $\on{Top}$ will denote the category of topological spaces endowed 
with the cartesian product as symmetric monoidal product.
\item Consider the symmetric monoidal category $\bf{Grpd}$ of groupoids, 
with symmetric monoidal structure given by the cartesian product. There is a 
\textit{$\KK$-prounipotent completion} functor 
$$\mathcal G\mapsto \hat{\mathcal G}(\KK):=G\big(\mathcal G(\KK)\big)$$ for 
operads (resp.~modules) in groupoids. Consider the symmetric monoidal 
category $\mathbf{Grpd}_\kk$ of $\kk$-prounipotent groupoids (being the 
image of the completion functor $\mathcal G\mapsto \hat{\mathcal G}(\kk)$). 
\item For $\mathcal C$ being $\mathbf{Grpd}$, $\mathbf{Grpd}_\kk$, or 
$\bf{Cat(CoAlg_{\kk})}$, the notation 
$$
\on{Aut}_{\tmop{Op}\mathcal C}^+\quad(\mathrm{resp.}~\on{Iso}_{\tmop{Op}\mathcal C}^+)
$$ 
refers to those automorphisms (resp.~isomorphisms) which are the identity on 
objects. Likewise, in the case of operadic modules, the superscript ``$+$'' still 
indicates that we consider couples of morphisms 
that are both the identity on objects. 
\end{itemize}

\subsection{Rational Drinfeld torsors}
\label{Rators}

Consider the inclusions of topological operads
$$
\mathbf{Pa}(-)\,\subset\,\overline{\textrm{C}}(\mathbb{R},-)\,
\subset\,\overline{\textrm{C}}(\mathbb{C},-)\,
$$
where, for any finite set $I$,
\begin{itemize}
\item $\mathbf{Pa}(I)$ is the set of ordered maximal parenthesizations of 
$\underbrace{\bullet\cdots\bullet}_{|I|~{\rm times}}$,
\item $\overline{\textrm{C}}(\mathbb{R},I)$ (resp. $\overline{\textrm{C}}(\mathbb{C},I)$) 
is the Axelrod--Singer--Fulton--MacPherson 
(ASFM) compactification of the reduced configuration space of points indexed 
by $I$ in $\RR$ (resp. in $\C$).
\end{itemize}
As a pointed operad in groupoids having $\mathbf{Pa}$ as operad of objects, 
$$\mathbf{PaB}:=\pi_1\left(\overline{\textrm{C}}(\mathbb{C},-),\mathbf{Pa}\right)$$
is freely generated by
\begin{center}
$R^{1,2}:=$
\begin{tikzpicture}[baseline=(current bounding box.center)]
\tikzstyle point=[circle, fill=black, inner sep=0.05cm]
 \node[point, label=above:$1$] at (1,1) {};
 \node[point, label=below:$2$] at (1,-0.25) {};
 \node[point, label=above:$2$] at (2,1) {};
 \node[point, label=below:$1$] at (2,-0.25) {};
  \draw[->,thick] (1,1) .. controls (1,0.25) and (2,0.5).. (2,-0.20);
 \node[point, ,white] at (1.5,0.4) {};
 \draw[->,thick] (2,1) .. controls (2,0.25) and (1,0.5).. (1,-0.20); 
\end{tikzpicture}
$\in \on{Hom}_{\PaB(2)}(12,21)$ ;
$\Phi^{1,2,3}:=$
\begin{tikzpicture}[baseline=(current bounding box.center)]
\tikzstyle point=[circle, fill=black, inner sep=0.05cm]
 \node[point, label=above:$(1$] at (1,1) {};
 \node[point, label=below:$1$] at (1,-0.25) {};
 \node[point, label=above:$2)$] at (1.5,1) {};
 \node[point, label=below:$(2$] at (3.5-1,-0.25) {};
 \node[point, label=above:$3$] at (4-1,1) {};
 \node[point, label=below:$3)$] at (4-1,-0.25) {};
 \draw[->,thick] (1,1) .. controls (1,0) and (1,0).. (1,-0.20); 
 \draw[->,thick] (1.5,1) .. controls (1.5,0.25) and (3.5-1,0.5).. (3.5-1,-0.20);
 \draw[->,thick] (4-1,1) .. controls (4-1,0) and (4-1,0).. (4-1,-0.20);
\end{tikzpicture}
$\in\on{Hom}_{\PaB(3)}((12)3,1(23))$
\end{center}
together with the following relations: 
\begin{flalign}
& \Phi^{\emptyset,2,3}=\Phi^{1,\emptyset,3}=\Phi^{1,2,\emptyset}=\on{Id}_{1,2}									\quad 
\Big(\mathrm{in}~\mathrm{Hom}_{\mathbf{PaB}(2)}\big(12,12\big)\Big)\,, 				\tag{R} \\
& R^{1,2}\Phi^{2,1,3}R^{1,3}=\Phi^{1,2,3}R^{1,23}\Phi^{2,3,1} 											\quad 
\Big(\mathrm{in}~\mathrm{Hom}_{\mathbf{PaB}(3)}\big((12)3,2(31)\big)\Big)\,, 				\tag{H1} \\
& (R^{2,1})^{-1}\Phi^{2,1,3}(R^{3,1})^{-1}=\Phi^{1,2,3}(R^{23,1})^{-1}\Phi^{2,3,1}	\quad 
\Big(\mathrm{in}~\mathrm{Hom}_{\mathbf{PaB}(3)}\big((12)3,2(31)\big)\Big)\,, 				\tag{H2} \\
& \Phi^{12,3,4}\Phi^{1,2,34}=\Phi^{1,2,3}\Phi^{1,23,4}\Phi^{2,3,4} 									\quad
\Big(\mathrm{in}~\mathrm{Hom}_{\mathbf{PaB}(4)}\big(((12)3)4,1(2(34))\big)\Big)\,.&	\tag{P}
\end{flalign}
We will denote $\tilde R^{1,2}:=(R^{2,1})^{-1}$ and we make use of the same notation as in \cite{CaGo2}.

For each $n\geq 1$ and each object $\mathbf{p}\in \PaB(n)$, the group $\on{Aut}_{\PaB(n)}(\mathbf{p})$ is 
isomorphic to the fundamental group of $\on{Conf}(\C,n)$, also known as the pure 
braid group with $n$ strands.
One can easily check that $\PB_n$ is generated by elementary pure braids 
$x_{ij}$, $1\leq i<j\leq n$, 
which satisfy a prescribed family of relations.
We will depict the generator $x_{ij}$ in the following two equivalent ways: 
\begin{center}
\begin{tikzpicture}[baseline=(current bounding box.center)]
\tikzstyle point=[circle, fill=black, inner sep=0.05cm]
	\node[point, label=above:$1$] at (0,1) {};
	\node[point, label=below:$1$] at (0,-1.5) {};
	\draw[->,thick, postaction={decorate}] (0,1) -- (0,-1.45);
	\node[point, label=above:$i$] at (1,1) {};
	\node[point, label=below:$i$] at (1,-1.5) {};
	\node[point, label=above:$...$] at (2,1) {};
	\node[point, label=below:$...$] at (2,-1.5) {};
	\draw[->,thick, postaction={decorate}] (2,1) -- (2,-1.45);
	\node[point, ,white] at (2,0.4) {};
	\node[point, ,white] at (2,-0.9) {};
	\node[point, label=above:$j$] at (3,1) {};
	\node[point, label=below:$j$] at (3,-1.5) {};
	\draw[thick] (1,1) .. controls (1,0.25) and (3.5,0.5).. (3.5,-0.25);
	\node[point, ,white] at (3,0.2) {};
	\draw[->,thick, postaction={decorate}] (3,1) -- (3,-1.45);
	\draw[->,thick, postaction={decorate}] (3,0) -- (3,-1.45);
	\node[point, ,white] at (3,-0.7) {};
	\draw[->,thick, postaction={decorate}] (3.5,-0.25) .. controls (3.5,-1) and (1,-0.75).. (1,-1.45);
	\node[point, label=above:$n$] at (4,1) {};
	\node[point, label=below:$n$] at (4,-1.5) {};
	\draw[->,thick, postaction={decorate}] (4,1) -- (4,-1.45);
\end{tikzpicture}
$\qquad\longleftrightarrow\quad\sphericalangle\quad$
\begin{tikzpicture}[baseline=(current bounding box.center)] 
\tikzstyle point=[circle, fill=black, inner sep=0.05cm]
	\draw[loosely dotted] (-1.5,1.5) -- (1.5,-1.5); 
	\node[point, label=below:1] at (-1.5,1.5) {}; 
	\node[point, label=above:$i$] at (-0.5,0.5) {};
	\node[point, label=above:$j$] at (0.5,-0.5) {};
	\node[point, label=above:$n$] at (1.5,-1.5) {};
	\draw[thick, postaction={decorate}] (-0.5,0.5) .. controls (0,-0.5) and (-0.25,-0.2).. (0.6,-0.4); 
	\draw[->,thick] (0.6,-0.4) .. controls (0.7,-0.5)  .. (0.6,-0.6) ;
	\draw[thick, postaction={decorate}] (0.6,-0.6) .. controls (0,-0.75) .. (-0.5,0.5);
\end{tikzpicture}
\end{center}

The holonomy Lie algebra of 
the configuration space $\on{Conf}(\C,n)$ is isomorphic to the so-called 
\textit{Kohno-Drinfeld} graded Lie $\mathbb{C}$-algebra $\t_n$ 
generated by $t_{ij}$, $1\leq i\neq j\leq n$, with relations
\begin{flalign}
& t_{ij}=t_{ji}\,,                			                 			\tag{S} \label{eqn:S} \\
& [t_{ij},t_{kl}]=0\quad\textrm{if }\#\{i,j,k,l\}=4 \,,				\tag{L} \label{eqn:L} \\
& [t_{ij},t_{ik}+t_{jk}]=0\quad\textrm{if }\#\{i,j,k\}=3\,. &	\tag{4T} \label{eqn:4T} 
\end{flalign}
The collection of Kohno-Drinfeld Lie $\kk$-algebras $\t_n(\KK)$, defined likewise, is 
provided with the structure of an operad in the 
category $grLie_\KK$.
The center of $\t_3(\kk)$ is $c_3=t_{12}+t_{13}+t_{23}$, the quotient of $\t_3(\kk)$ by $c_3$ 
is the free Lie algebra $\f_2(\kk)$. Along this paper we consider the inclusion $\hat\f_2(\kk)\subset \hat\t_3(\kk)$ 
sending $x$ to $t_{12}$ and $y$ to $t_{23}$.

We then consider the operad of \textit{chord diagrams} $\CD(\KK) := 
\hat{\mathcal{U}}(\t({\kk}))$ in $\mathbf{Cat(CoAlg_\KK)}$. 
It has only one object in each arity. The terminal 
morphism of operads $\omega_1:\Pa=\on{Ob}(\Pa(\KK))\to\on{Ob}(\CD(\kk))$ 
allow us to consider the fake pull-back operad
$$
\PaCD(\kk):=\omega_1^\star \CD(\kk)
$$
of \textit{parenthesized chord diagrams}. 
More explicitely, we have $\on{Ob}(\PaCD(\KK)):=\Pa$ and for all 
$p,q\in \PaCD(\KK)(n)$,
$
\on{Mor}_{\PaCD(\KK)(n)}(p,q)
:=\on{Mor}_{\CD(\KK)(n)}(pt,pt)=\hat{\mathcal{U}}(\hat\t_n({\kk}))\,
$. 
As is shown in \cite[Theorem 10.3.4]{Fresse}, the operad ${\PaCD}(\kk)$ does not have a presentation in terms 
of generators and relations (as is the case for $\PaB$) but has, nevertheless, a universal property with respect to generators 
$H^{1,2}, X^{1,2}$ and $ a^{1,2,3}$ depicted as follows
\begin{center}
$H^{1,2}:=$
\begin{tikzpicture}[baseline=(current bounding box.center)]
\tikzstyle point=[circle, fill=black, inner sep=0.05cm]
 \node[point, label=above:$1$] at (1,1) {};
 \node[point, label=below:$1$] at (1,-0.25) {};
 \node[point, label=above:$2$] at (2,1) {};
 \node[point, label=below:$2$] at (2,-0.25) {};
 \draw[->,thick] (2,1) to (2,-0.20); 
  \draw[densely dotted, thick] (1,0.5) to (2,0.5); 
 \draw[->,thick] (1,1) to (1,-0.20);
\end{tikzpicture}
\qquad
$X^{1,2}=1 \cdot$ 
\begin{tikzpicture}[baseline=(current bounding box.center)]
\tikzstyle point=[circle, fill=black, inner sep=0.05cm]
 \node[point, label=above:$1$] at (1,1) {};
 \node[point, label=below:$2$] at (1,-0.25) {};
 \node[point, label=above:$2$] at (2,1) {};
 \node[point, label=below:$1$] at (2,-0.25) {};
 \draw[->,thick] (2,1) to (1,-0.20); 
 \draw[->,thick] (1,1) to (2,-0.20);
\end{tikzpicture}
\qquad
$a^{1,2,3}=1 \cdot$
\begin{tikzpicture}[baseline=(current bounding box.center)]
\tikzstyle point=[circle, fill=black, inner sep=0.05cm]
 \node[point, label=above:$(1$] at (1,1) {};
 \node[point, label=below:$1$] at (1,-0.25) {};
 \node[point, label=above:$2)$] at (1.5,1) {};
 \node[point, label=below:$(2$] at (3.5,-0.25) {};
 \node[point, label=above:$3$] at (4,1) {};
 \node[point, label=below:$3)$] at (4,-0.25) {};
 \draw[->,thick] (1,1) to (1,-0.20); 
 \draw[->,thick] (1.5,1) to (3.5,-0.20);
 \draw[->,thick] (4,1) to (4,-0.20);
\end{tikzpicture}
\end{center}
These elements satisfy the following relations:
\begin{itemize}
\item $X^{2,1}=(X^{1,2})^{-1}$,
\item $a^{12,3,4}a^{1,2,34} = a^{1,2,3}a^{1,23,4}a^{2,3,4}$,
\item $X^{12,3}=a^{1,2,3}X^{2,3}(a^{1,3,2})^{-1}X^{1,3}a^{3,1,2}$,
\item $H^{1,2}=X^{1,2}H^{2,1}(X^{1,2})^{-1}$,
\item $H^{12,3}=a^{1,2,3}\big(H^{2,3}+X^{2,3}(a^{1,3,2})^{-1}H^{1,3}a^{1,3,2}X^{3,2}\big)(a^{1,2,3})^{-1}$.
\end{itemize}
\begin{definition}
We call \textit{rational Drinfeld torsor over} $\kk$
the bi-torsor\linebreak $(\widehat{\GT}(\kk),\Assoc(\kk),\GRT(\kk))$ where
\begin{eqnarray}
\widehat{\GT}(\kk)&:=& \on{Aut}_{\on{Op}\mathbf{Grpd}_\kk}^{+}\big(\widehat{\PaB}(\kk)\big), \\
\Ass(\kk)& := & \on{Iso}^+_{\on{Op} \mathbf{Grpd}_{\kk}}(\widehat{\PaB}(\KK),G \PaCD(\KK)), \\
\GRT(\kk)&:=&\on{Aut}_{\on{Op} \mathbf{Grpd}_{\kk}}^+(G\PaCD(\KK)).
\end{eqnarray}
\end{definition}
There is a bi-torsor isomorphism
\begin{equation}\label{bitorsor:cl}
(\widehat{\GT}(\kk),\Assoc(\kk),\GRT(\kk)) \longrightarrow 
(\widehat{\on{GT}}(\kk),\on{Ass}(\kk),\on{GRT}(\kk)),
\end{equation}
where 
\begin{itemize}
\item  $\on{Ass}(\kk)$ is the set of couples 
$(\mu,\varphi)\in\kk^\times \times \on{exp}(\hat\f_2(\kk))$ 
such that 
\begin{itemize}
\item $\varphi^{3,2,1}=(\varphi^{1,2,3})^{-1}, \quad $ in $\on{exp}(\hat\t_{3}(\kk))$,
\item $\varphi^{1,2,3}e^{\mu t_{23}/2}\varphi^{2,3,1}e^{\mu t_{31}/2}\varphi^{3,1,2}e^{\mu t_{12}/2}
=e^{\mu(t_{12}+t_{13}+t_{23})/2}, \quad $ in $\on{exp}(\hat\t_{3}(\kk))$,
\item $\varphi^{1,2,3}\varphi^{1,23,4}\varphi^{2,3,4}=
\varphi^{12,3,4}\varphi^{1,2,34}, \quad$ in $\on{exp}(\hat\t_{4}(\kk))$.
\end{itemize}
\item $\widehat{\on{GT}}(\kk)$ is the group of pairs 
$
(\lambda,f) \in \KK^{\times} \times\widehat{\on{F}}_2(\KK)
$
which satisfy the following equations:
\begin{itemize}
\item $f(x,y)=f(y,x)^{-1}$, in $\widehat{\on{F}}_2(\KK)$, 
\item $x_1^{\nu}f(x_1,x_2)x_2^{\nu}f(x_2,x_3)x_3^{\nu}f(x_3,x_1)=1$, in 
$\widehat{\on{F}}_2(\KK) \qquad (x_1x_2x_3=1 \text{, } \nu=\frac{\lambda-1}{2})$, 
\item $f(x_{13}x_{23}, x_{34})f(x_{12}, x_{23}x_{24}) =f(x_{12}, x_{23}) 
f(x_{12}x_{13}, x_{23}x_{34})f(x_{23}, x_{34})$, in $\widehat{\on{PB}}_4(\KK)$,
\end{itemize}
with multiplication law given by
\begin{equation*}\label{GT:LCI}
(\lambda_1, f_1)(\lambda_2, f_2)=
(\lambda_1\lambda_2, 
 f_1(x^{\lambda_2},f_2(x, y)y^{\lambda_2} f_2(x, y)^{-1}) f_2(x, y)) .
\end{equation*}
\item $\on{GRT}(\kk):=\on{GRT}_1 \rtimes \kk^{\times}$ where $\on{GRT}_1$ is the 
group of elements $g\in\on{exp}(\hat\f_2(\kk))$ such that 
\begin{itemize}
\item $g^{3,2,1}=g^{-1}$ and $g^{1,2,3}g^{2,3,1}g^{3,1,2}=1$, in $\on{exp}(\hat\t_{3}(\kk))$,
\item $t_{12}+\on{Ad}(g^{1,2,3})(t_{23})+ \on{Ad}(g^{2,1,3})(t_{13}) = t_{12}+t_{13}+t_{23}$, in $\hat\t_{3}(\kk)$, 
\item $g^{1,2,3}g^{1,23,4}g^{2,3,4}=g^{12,3,4}g^{1,2,34}$, in $\on{exp}(\hat\t_{4}(\kk))$,
\end{itemize}
with multiplication law given by
$$
(g_1*g_2)(t_{12},t_{23})= g_1(t_{12},\on{Ad}(g_2(t_{12},t_{23}))(t_{23}))g_2(t_{12},t_{23})\,
$$
and where $\kk^\times$ acts on $\on{\GRT}_1$ by $\lambda\cdot g(x,y)=g(\lambda x,\lambda y)$.
\end{itemize}

\section{Operads associated to framed configuration spaces (framed associators)}
\label{framed associators}
\subsection{The operad of parenthesized framed braidings}

\subsubsection{Compactified framed configuration spaces of the plane}

To any finite set $I$ we associate  framed configuration space 
$$
\tmop{Conf}^f (\C,I) \assign \tmop{Conf} (\C, I) \times \tmop{SO} (2)^{\times I}.
$$
We also consider its reduced version 
$$
 \tmop{C}^f (\C,I) \assign \tmop{Conf}^f (\C,I)/\C \rtimes \RR_{>0}.
$$
The symmetric group $\mathfrak{S}_I$ acts on $\on{Conf}^f(\C,I)$ by relabelling 
the indices of the marked points and the map 
$\Conf^f(\C,[I]):=\Conf^f(\C,I)/\mathfrak{S}_I \to \on{Conf}(\C,[I])$ is a 
locally trivial bundle with fiber $\on{SO}(2)^{ I}$. 

We then consider the ASFM compactification $\overline{\on{C}}^f(\C,I)$ 
of the reduced framed 
configuration space $\tmop{C}^f (\C,I)$.
The boundary $\partial \overline{\on{C}}^f(\C,I) =\overline{\on{C}}^f(\C,I) - 
\tmop{C}^f (\C, I)$ of $\overline{\on{C}}^f(\C,I)$ is
made of the following irreducible components: for any decomposition $I = J_1  \coprod
\cdots \coprod J_k$ there is a component
\[ \partial_{J_1, \cdots, J_k} \overline{\on{C}}^f(\C,I) \cong \prod_{i =
   1}^k \overline{\on{C}}^f(\C,J_i) \times \overline{\on{C}}^f(\C,k)
   \hspace{0.17em} . \]
The collection of spaces $\overline{\on{C}}^f(\C,I)$ for all finite sets $I$ 
assemble into an $\mathfrak{S}$-module denoted $\overline{\on{C}}^f(\C,-)$, 
and the inclusion of boundary components  with respect to the direction of
the frame data provides $\overline{\on{C}}^f(\C,-)$ with the structure 
of an operad in topological spaces. 
This operad will be called the \textit{framed
ASFM operad}. It turns out to be weakly equivalent to the framed little $2$-disks 
operad. Partial operadic composition morphisms can be pictured as follows:
\begin{center}
\includegraphics[scale=1]{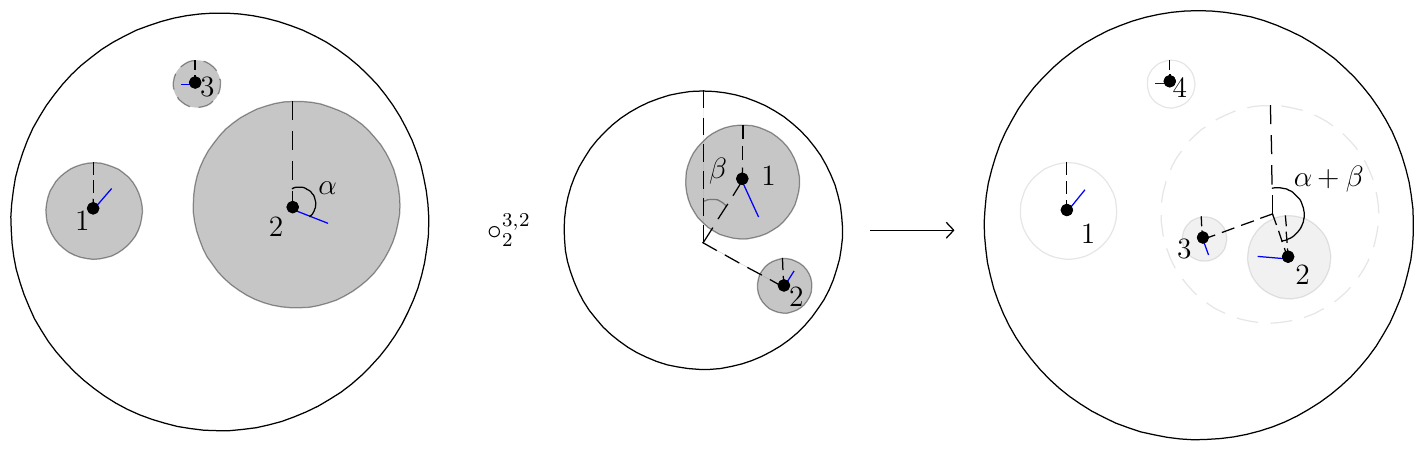}
\end{center}

\subsubsection{The operad of framed parenthesized braidings}

We have inclusions  of
topological operads
\[ \Pa \subset
\tmop{C} (\mathbbm{R}, -) \hspace{0.17em}
   \subset \hspace{0.17em} \overline{\on{C}}^f(\C,-) . \]
where the right inclusion is given by setting all framing elements pointing to the right.
Then it makes sense to define the operad in groupoids of 
\textit{framed parenthesized braidings}
  \[ \PaB^f \assign \pi_1 (
     \overline{\on{C}}^f(\C,-),  \Pa
     ). \]
\begin{example}[Description of $\PaB^f(1)$]
Recall that $ \overline{\on{C}}^f(\C,1)=\{(z_1,f_1)\}\simeq \mathbb{S}^1$. 
Besides the identity morphism in $\PaB^f(1)$, there is an element denoted $F^1\in \on{End}_{\PaB^f
(1)}(1)$ corresponding to the 360° clockwise twisting of the framing and which can be depicted as follows:
\begin{center}
\begin{tikzpicture}[baseline=(current bounding box.center)]
\tikzstyle point=[circle, fill=black, inner sep=0.05cm]
\tikzstyle point2=[circle, fill=black, inner sep=0.08cm]
\node[point, label=above:$1$] at (1,1) {};
\node[petitpoint, label=above:$\overrightarrow{1}$] at (1.5,1) {};
\node[point, label=below:$1$] at (1,0) {};
\node[petitpoint, label=below:$\overrightarrow{1}$] at (1.5,0) {};
\draw[-,thick] (1,1) .. controls (1,0.5) .. (1,0.5);
\node[point, ,white] at (1,0.7) {};
\draw[-,thin] (1.5,1) .. controls (1.5,0.75) and (0.75,0.75) .. (0.75,0.5); 
\draw[->,thin] (0.75,0.5) .. controls (0.75,0.35) and (1.5,0.35) .. (1.5,0.05);
\node[point, ,white] at (1,0.37) {};
\draw[->,thick] (1,0.5) .. controls (1,0.5) .. (1,0.05);
  \draw[-,thin] (1,1) .. controls (1,1) and (1.5,1) .. (1.5,1); 
    \draw[-,thin] (1,0) .. controls (1,0) and (1.5,0) .. (1.5,0); 
\end{tikzpicture}
\qquad\qquad\qquad
\begin{tikzpicture}[baseline=(current bounding box.center)]
\tikzstyle point=[circle, fill=black, inner sep=0.05cm]
 \node[point, label=left:$1$] at (0,0) {};
 \node[petitpoint, label=right:$\overrightarrow{1}$] at (0.5,0) {};
 \draw[->,thin] (0.5,0) .. controls (-1,-1) and (-1,1) .. (0.5,0); 
   \draw[-,thin] (0,0) .. controls (0,0) and (0.5,0) .. (0.5,0); 
\end{tikzpicture}
\\ \text{Two incarnations of $F^{1}$.}
\end{center}
\end{example}

\begin{example}[Description of $\PaB^f(2)$]\label{Ex1}
Recall that $ \overline{\on{C}}(\C,2)\simeq \mathbb{S}^1$. Then 
$ \overline{\on{C}}^f(\C,2)\simeq (\mathbb{S}^1)^3$. We have two 
arrows $F^{1,2}$ and $\tilde{F}^{1,2}$ in $\PaB^f(2)$ going from 
$(12)$ to $(12)$ which can be depicted as follows:
\begin{center}
\begin{tikzpicture}[baseline=(current bounding box.center)]
\tikzstyle point=[circle, fill=black, inner sep=0.05cm]
\tikzstyle point2=[circle, fill=black, inner sep=0.08cm]
\node[point, label=above:$1$] at (1,1) {};
\node[petitpoint, label=above:$\overrightarrow{1}$] at (1.5,1) {};
\node[point, label=below:$1$] at (1,0) {};
\node[petitpoint, label=below:$\overrightarrow{1}$] at (1.5,0) {};
\draw[-,thick] (1,1) .. controls (1,0.5) .. (1,0.5);
\node[point, ,white] at (1,0.7) {};
\draw[-,thin] (1.5,1) .. controls (1.5,0.75) and (0.75,0.75) .. (0.75,0.5); 
\draw[->,thin] (0.75,0.5) .. controls (0.75,0.35) and (1.5,0.35) .. (1.5,0.05);
\node[point, ,white] at (1,0.37) {};
\draw[->,thick] (1,0.5) .. controls (1,0.5) .. (1,0.05);
  \draw[-,thin] (1,1) .. controls (1,1) and (1.5,1) .. (1.5,1); 
    \draw[-,thin] (1,0) .. controls (1,0) and (1.5,0) .. (1.5,0); 
    \node[point, label=above:$2$] at (2.5,1) {};
    \node[point, label=below:$2$] at (2.5,0) {};
    \draw[->,thick] (2.5,1) .. controls (2.5,0.5) .. (2.5,0.05);
        \node[petitpoint, label=above:$\overrightarrow{2}$] at (3,1) {};
    \node[petitpoint, label=below:$\overrightarrow{2}$] at (3,0) {};
    \draw[->,thin] (3,1) .. controls (3,0.5) .. (3,0.05);
      \draw[-,thin] (2.5,1) .. controls (2.5,1) and (3,1) .. (3,1); 
       \draw[-,thin] (2.5,0) .. controls (2.5,0) and (3,0) .. (3,0); 
\end{tikzpicture}
\qquad  \qquad \qquad 
\begin{tikzpicture}[baseline=(current bounding box.center)]
\tikzstyle point=[circle, fill=black, inner sep=0.05cm]
\tikzstyle point2=[circle, fill=black, inner sep=0.08cm]
\node[point, label=above:$2$] at (1,1) {};
\node[petitpoint, label=above:$\overrightarrow{2}$] at (1.5,1) {};
\node[point, label=below:$2$] at (1,0) {};
\node[petitpoint, label=below:$\overrightarrow{2}$] at (1.5,0) {};
\draw[-,thick] (1,1) .. controls (1,0.5) .. (1,0.5);
\node[point, ,white] at (1,0.7) {};
\draw[-,thin] (1.5,1) .. controls (1.5,0.75) and (0.75,0.75) .. (0.75,0.5); 
\draw[->,thin] (0.75,0.5) .. controls (0.75,0.35) and (1.5,0.35) .. (1.5,0.05);
\node[point, ,white] at (1,0.37) {};
\draw[->,thick] (1,0.5) .. controls (1,0.5) .. (1,0.05);
  \draw[-,thin] (1,1) .. controls (1,1) and (1.5,1) .. (1.5,1); 
    \draw[-,thin] (1,0) .. controls (1,0) and (1.5,0) .. (1.5,0); 
    \node[point, label=above:$1$] at (-0.5,1) {};
    \node[point, label=below:$1$] at (-0.5,0) {};
    \draw[->,thick] (-0.5,1) .. controls (-0.5,0.5) .. (-0.5,0.05);
        \node[petitpoint, label=above:$\overrightarrow{1}$] at (0,1) {};
    \node[petitpoint, label=below:$\overrightarrow{1}$] at (0,0) {};
    \draw[->,thin] (0,1) .. controls (0,0.5) .. (0,0.05);
      \draw[-,thin] (-0.5,1) .. controls (-0.5,1) and (0,1) .. (0,1); 
       \draw[-,thin] (-0.5,0) .. controls (-0.5,0) and (0,0) .. (0,0); 
\end{tikzpicture}
\\ \text{The arrows $F^{1,2}$ (left) and $\tilde{F}^{1,2}$ (right).}
\end{center}
Next, the image of $F^1$ in $\PaB^f(2)$ will be denoted $F^{12}$. 
It can be depicted as follows:
\begin{center}
 \includegraphics[scale=1]{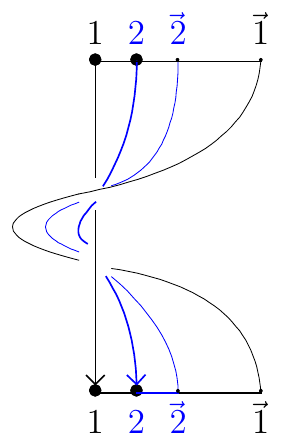}$\quad\qquad 
 \qquad$ \includegraphics[scale=1]{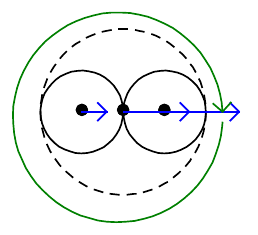}
\\ \text{Two incarnations of $F^{12}$.}
\end{center}
The element $F^{12}$ consits on a single ribbon, with second a ribbon glued along its surface, 
being twisted 360 degrees
and the blue strand is the transport of the glued ribbon lying in the surface of
this ribbon.

Finally, we have arrows $R^{1,2}$ and $\tilde{R}^{1,2}$ in $\mathbf{PaB}^f(2)$ 
going from $(12)$ to $(21)$ which can be depicted as follows:
\begin{center}
\includegraphics[scale=1]{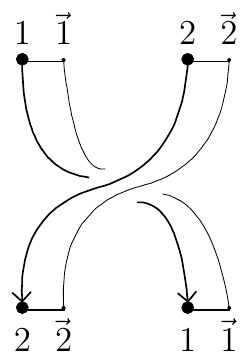} $\quad \qquad \qquad$ 
\includegraphics[scale=1]{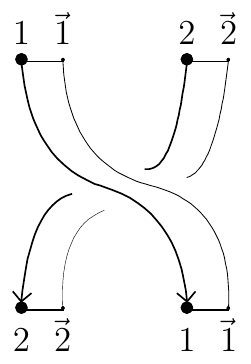}
\\ \text{The arrows $R^{1,2}$ (left) and $\tilde{R}^{1,2}$ (right).}
\end{center}
\end{example}

\begin{remark}
On can actually see that $\tilde{R}^{1,2}$, $F^{12}$ and $\tilde{F}^{1,2}$ 
are obtained from the arrows $F^{1,2}$ and $R^{1,2}$ via the following identities:
\begin{itemize}
\item $ \tilde{R}^{1,2}=(R^{2,1})^{-1}$,
\item $\tilde{F}^{1,2}=F^{12}(F^{1,2})^{-1}(R^{1,2}R^{2,1})^{-1}$,
\item $F^{12}=F^{1,\emptyset} \circ_1 \on{Id}^{1,2}=F^{12,\emptyset}$.
\end{itemize}
\end{remark}

\begin{example}[Notable arrows in $\PaB^f (3)$]
We have an arrow $\Phi^{1,2,3}$ from $(12)3$ to $1(23)$ in $\PaB^f(3)$. 
It can be depicted as follows: 

\begin{center}
\includegraphics[scale=1]{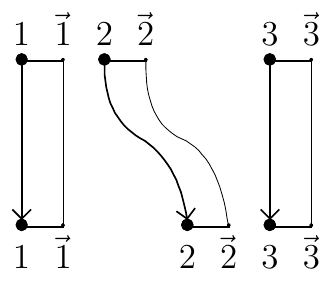} 
\\ \text{The arrow $\Phi^{1,2,3}$ in $\PaB^f (3)$.}
\end{center}
\end{example}

Recall the definition of the operad $\CoB$ of coloured braids from 
\cite[Subsection 5.2.8]{Fresse} and of its framed version $\CoB^f$ 
contained in \cite{Ho2}.
As in the case of the operad $\PaB$, the operad $\PaB^f$ can be
defined as the fake pullback of the framed version $\CoB^f$ of $\CoB$ 
along the operad map $\Pa \to \mathfrak{S}$ and we have a
presentation of $\PaB^f$ in terms of generators and
relations. Namely, we have the following theorem which is an 
straightforward corollary of \cite[Lemma 7.4]{Ho2}. 
\begin{theorem}\label{th:assf}
As a pointed operad in groupoids having $\Pa$ as operad of objects, $\PaB^f$ is freely generated by $F:=F^{1,2} \in 
  \textbf{\tmop{\PaB}}^f (2)$, $R:=R^{1, 2} \in \PaB^f
  (2)$ and $\Phi:=\Phi^{1, 2, 3} \in \PaB^f (3)$ together
  with the following relations:
  \begin{flalign}
  & F^{\emptyset,2}=\on{Id}^1 \quad 
\Big(\mathrm{in}~\mathrm{End}_{\mathbf{PaB}^f(1)}\big(1\big)\Big)\,, 		\tag{R1}\label{eqn:R1} \\
& \Phi^{\emptyset,2,3}=\Phi^{1,\emptyset,3}=\Phi^{1,2,\emptyset}=\on{Id}_{1,2}									\quad 
\Big(\mathrm{in}~\mathrm{Hom}_{\mathbf{PaB}^f(2)}\big(12,12\big)\Big)\,, 		\tag{R2}\label{eqn:R2} \\
& F^{1,2}R^{1,2}F^{2,1}R^{2,1}=F^{12} \quad 
\Big(\mathrm{in}~\mathrm{Hom}_{\mathbf{PaB}^f(2)}\big(12,12\big)\Big)\,,      \tag{F}\label{eqn:F} \\
& R^{1,2}\Phi^{2,1,3}R^{1,3}=\Phi^{1,2,3}R^{1,23}\Phi^{2,3,1} 											\quad 
\Big(\mathrm{in}~\mathrm{Hom}_{\mathbf{PaB}^f(3)}\big((12)3,2(31)\big)\Big)\,, 				\tag{H1}\label{eqn:H1} \\
& \tilde R^{1,2}\Phi^{2,1,3}\tilde R^{1,3}=\Phi^{1,2,3}\tilde R^{1,23}\Phi^{2,3,1} 			\quad 
\Big(\mathrm{in}~\mathrm{Hom}_{\mathbf{PaB}^f(3)}\big((12)3,2(31)\big)\Big)\,, 				\tag{H2}\label{eqn:H2} \\
& \Phi^{12,3,4}\Phi^{1,2,34}=\Phi^{1,2,3}\Phi^{1,23,4}\Phi^{2,3,4} 									\quad
\Big(\mathrm{in}~\mathrm{Hom}_{\mathbf{PaB}^f(4)}\big(((12)3)4,1(2(34))\big)\Big)\,. & 	\tag{P}\label{eqn:P} 
\end{flalign}
\end{theorem}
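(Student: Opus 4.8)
The plan is to obtain the presentation by transporting Horel's description of the framed coloured braid operad $\CoB^f$ through the fake-pullback construction, following the strategy used by Fresse to present the unframed operad $\PaB$. As recalled just above the statement, $\PaB^f$ is the fake pullback $\omega^\star\CoB^f$ of $\CoB^f$ along the operad map $\omega\colon\Pa\to\mathfrak{S}$ sending a parenthesized word to its underlying ordering; concretely $\on{Hom}_{\PaB^f(n)}(p,q)=\on{Hom}_{\CoB^f(n)}(\omega(p),\omega(q))$ for all objects $p,q$. Thus no morphism is lost or created by the pullback: only the object set is enriched from $\mathfrak{S}$ to $\Pa$, and all the work is to translate a presentation of $\CoB^f$ into one of $\PaB^f$.

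First I would invoke \cite[Lemma 7.4]{Ho2}, which exhibits $\CoB^f$ as an extension of the unframed coloured braid operad $\CoB$ by the full twists $F^i$ generating the framings, and records how a full twist interacts with the braiding $R$. This is exactly the source of relation \eqref{eqn:F}: the total framing $F^{12}$ of a merged pair of points equals the product of the two individual framings composed with the double braiding, $F^{1,2}R^{1,2}F^{2,1}R^{2,1}=F^{12}$, while the degeneracy of a framing carried by an erased point yields \eqref{eqn:R1}. The braiding part of Horel's presentation reproduces verbatim the data giving the hexagons \eqref{eqn:H1} and \eqref{eqn:H2}, exactly as in the unframed case.

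Next comes the crux: upgrading the presentation of $\CoB^f$ to one of $\omega^\star\CoB^f=\PaB^f$. Here I would use that $\Pa$ is generated, as an operad in sets, by a single binary operation $\mu\in\Pa(2)$ (with the symmetric action permuting inputs freely), so that every object of $\PaB^f$ is connected to the standard left-bracketed one by a canonical zig-zag of associators, and the coherence of these zig-zags is governed precisely by the pentagon \eqref{eqn:P}; the degeneracies of $\mu$ give \eqref{eqn:R2}. This is the operadic incarnation of Mac Lane coherence, and it is what forces the adjunction of the single extra generator $\Phi$ together with \eqref{eqn:P} and \eqref{eqn:R2} on top of the transferred braid-and-framing data. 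Combining this with the previous step shows that $F$, $R$, $\Phi$ generate $\PaB^f$ and that \eqref{eqn:R1}, \eqref{eqn:R2}, \eqref{eqn:F}, \eqref{eqn:H1}, \eqref{eqn:H2}, \eqref{eqn:P} hold.

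The step I expect to be the main obstacle is completeness: checking that these relations generate all relations, i.e. that adjoining only the framing generator $F$ and the single relation \eqref{eqn:F} to the known presentation of $\PaB$ faithfully reproduces the framing extension of Horel's Lemma after fake pullback. This reduces to verifying that the extension is captured fibrewise by the full twists and that the fake pullback, being the identity on morphism sets, neither introduces nor kills framing relations; both follow from Horel's explicit computation, which is why the result is a corollary of \cite[Lemma 7.4]{Ho2}.
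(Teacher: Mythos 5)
Your proposal is correct and follows essentially the same route as the paper, which offers no detailed argument but simply declares the theorem a straightforward corollary of \cite[Lemma 7.4]{Ho2} combined with the fake-pullback description $\PaB^f=\omega^\star\CoB^f$. Your unpacking --- transporting Horel's presentation of $\CoB^f$ through the fake pullback, adjoining $\Phi$ with the pentagon and unit relations via Mac Lane coherence for $\Pa$, and isolating the framing data in \eqref{eqn:R1} and \eqref{eqn:F} --- is exactly the intended argument.
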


\begin{remark}
Combining relations \eqref{eqn:R1} and \eqref{eqn:F} we obtain $ F^{1,\emptyset}=F^1$.
\end{remark}

\subsubsection{The non-symmetric operad $\on{PB}^f$ of framed braidings}\label{PBf}

Let us now introduce two non-symmetric operads that will be of 
use in Lemma \ref{prop:gfra} and Theorem \ref{Thm:PaBfg}.

The collection $\on{PB}^f:=\{\on{PB}^f_{n}\}_{n\geq 0}$ can be 
endowed with the structure of a non-symmetric operad given by partial compositions 
\begin{eqnarray*}
\circ_i :  \on{PB}^f_{n} \times  \on{PB}^f_{m} & \longrightarrow &  \on{PB}^f_{n+m-1} \\
(b,b') & \longmapsto & b \circ_i b'
\end{eqnarray*}
where $b \circ_i b'$ is defined by replacing the $i$-labelled strand 
in $b$ by the braid $b'$ made very thin.
Via the homotopy equivalence between framed little disks and framed 
configuration spaces we presented in the last section, one checks that 
the above operadic composition for $\on{PB}^f$ is induced by that on $\on{D}_2^f$.

The fundamental group of the unordered framed configuration space 
$\Conf^f(\C,[n])$ was studied in \cite{KS} and is isomorphic to the 
\textit{framed} (also called \textit{ribbon}) \textit{braid group} $\on{B}^f_n$ 
generated by elements $\sigma_1,\sigma_2,\ldots,\sigma_{n-1},f_1,f_2,\ldots,f_n$ 
together with relations
\begin{flalign}
& \sigma_i \sigma_{i + 1} \sigma_i = \sigma_{i + 1} \sigma_i \sigma_{i + 1}\,,   
\quad \text{ for all } 1 \leq i \leq n-2,  \tag{B1} \label{eqn:B1} \\
& (\sigma_i, \sigma_j)= 1\,,      		\quad \text{ if } |i - j| > 1,  \tag{B2} \label{eqn:B2} \\
& f_if_j=f_jf_i\,,      		\quad \text{ for all }1 \leq i,j \leq n,  \tag{FB1} \label{eqn:FB1} \\
& \sigma_i f_j = f_{\sigma_i(j)}\sigma_i\,,       \quad \text{ for all }1 \leq i,j \leq n. &  \tag{FB2} \label{eqn:FB2} 
\end{flalign}

For convenience, we will rather think of the framed braid group $\on{B}^f_n$ 
as a subgroup of $\on{B}_{2n}$ with two generating elements $\tau_i$ and $f_i$ such that
\begin{enumerate}
\item $\tau_i=\sigma_{2i}\sigma_{2i-1}\sigma_{2i+1}\sigma_{2i}$
\item $f_i=\sigma_{2i-1}^{2}$
\end{enumerate}
Geometrically, if we denote $(z_1,\vec{z}_1,\ldots,z_n,\vec{z}_n)$ a point in 
$\Conf^f(\C,[n])$, where  then $\tau_i$ exchanges $(z_i,\vec{z}_i)$ and 
$(z_{i+1},\vec{z}_{i+1})$ in clockwise direction and $f_i$ makes a 360 degrees 
twist of $\vec{z}_i$ around $z_1$ in the clockwise direction.

The space $\Conf^f(\C,[n])$ is an Eilenberg--Maclane space of type 
$\on{K}(\on{B}^f_n,1)$ and the group $\on{B}^f_n$ is identified with the semidirect 
product $\Z^n \rtimes \B_n$ where the action of the braid group $\B_n$ on $\Z^n$ 
is given by $\sigma(r_i , ...., r_n) = (r_{\sigma(1)}, r_{\sigma(2)}, ... , r_{\sigma(n)})$. 
An element of $\on{B}_n^f$ is written as $f_1^{r_1},f_2^{r_2},\cdots, f_n^{r_n},\alpha\in \on{B}^f_n$ 
with $\alpha\in \on{B}_n$. The $r_i$'s are called \textit{framings}. 
The group composition law in this notation is given by
$$
(f_1^{r_1}f_2^{r_2}\cdots f_n^{r_n}\alpha)(f_1^{s_1}f_2^{s_2}\cdots f_n^{s_n}\beta)=
f_1^{r_1+s_{\alpha(1)}}f_2^{r_2+s_{\alpha(2)}}\cdots f_n^{r_n+s_{\alpha(n)}}\alpha\beta.
$$

Let $n\geq 0$, and $p$ be the object $(\cdots((12)3)\cdots\cdots)n$ of $\PaB^f(n)$. 
Then $\on{Aut}_{\mathbf{PaB}^f(n)}(p)$ identifies with the fundamental group 
$\on{PB}^f_{n}:=\pi_1\left(\overline{\textrm{C}}^f(\C,n),p\right)$, which is is isomorphic 
to the direct product $\on{PB}^f_{n}=\Z^n \times \on{PB}_n$.

In the same way, one can construct a non-symmetric operad in 
groupoids $\on{B}^f$ in the following way : 
\begin{itemize}
\item The objects of $\on{B}^f(n)$ are unnumbered maximal parenthesizations of 
lenght $n$. In particular, this means that for every object $p$ of $\mathbf{Pa}(n)$, 
there is a corresponding object $[p]$ in $\on{B}^f(n)$, and $[p]=[q]$ if $p$ and $q$ 
only differ by a permutation (but have the same underlying parenthesization). 
\item $\on{B}^f$ is freely generated by $F:=F^{\bullet, \bullet}\in 
  \on{B}^f (2)$, $R:=R^{\bullet, \bullet} \in \on{B}^f
  (2)$ and $\Phi:=\Phi^{\bullet, \bullet, \bullet} \in \on{B}^f (3)$ together
  with relations (H1), (H2), (P) and the following relation:
  \begin{flalign}
  & R^{\bullet,\bullet}R^{\bullet,\bullet}F^\bullet F^\bullet=F^{(\bullet\bullet)} \quad 
  \text{in } \on{End}_{\on{B}^f (2)}(\bullet\bullet), & \tag{F}
  \end{flalign}
  \item $\on{B}^f$ is the image of $\PaB^f$ via the forgetful map $\bf{Op} \to \bf{NsOp}$ 
  sending an operad to a non-symmetric operad.
\item The operad of coloured framed braids is nothing but 
$$
\CoB^f=\mathcal{G}(\on{B}^f \to \mathfrak{S}).
$$
\item It follows that there are group morphisms $\on{B}_n^f \tilde{\to} 
\on{Aut}_{\on{B}^f(n)}(p)\to\mathfrak S_n$, the left one being an isomorphism.
\end{itemize}

For example,  arrows in $\on{Aut}_{\on{B}^f(3)}((\bullet\bullet)\bullet)$  can be 
depicted as follows (we neglect for simplicity the framing data):
\begin{align}
\begin{tikzpicture}[baseline=(current bounding box.center)]
\tikzstyle point=[circle, fill=black, inner sep=0.05cm]
 \node[point, label=above:$(\bullet$] at (1,1) {};
 \node[point, label=below:$(\bullet$] at (1,-0.25) {};
 \node[point, label=above:$\bullet)$] at (2,1) {};
 \node[point, label=below:$\bullet)$] at (2,-0.25) {};
  \node[point, label=above:$\bullet$] at (3,1) {};
 \node[point, label=below:$\bullet$] at (3,-0.25) {};
 \draw[->,thick] (2,1) .. controls (1,0.25) and (1,0.5).. (2,-0.20); 
 \node[point, ,white] at (1.5,0.66) {};
 \draw[->,thick] (1,1) .. controls (2,0.25) and (2,0.5).. (1,-0.20);
   \node[point, ,white] at (1.5,0.15) {}; \draw[->,thick] (3,1) .. controls (3,0) and (3,0).. (3,-0.20);
    \draw[thick] (1.41,0.2) .. controls (1.41,0.2) and (1.41,0.2).. (2,-0.20); 
\end{tikzpicture}  ; \qquad 
\begin{tikzpicture}[baseline=(current bounding box.center)]
\tikzstyle point=[circle, fill=black, inner sep=0.05cm]
 \node[point, label=above:$(\bullet$] at (1,1) {};
 \node[point, label=below:$(\bullet$] at (1,-0.25) {};
 \node[point, label=above:$\bullet)$] at (2,1) {};
 \node[point, label=below:$\bullet)$] at (2,-0.25) {};
  \node[point, label=above:$\bullet$] at (3,1) {};
 \node[point, label=below:$\bullet$] at (3,-0.25) {};
  \draw[->,thick] (2,1) .. controls (2,0.25) and (1,0.5).. (1,-0.20); 
 \node[point, ,white] at (1.5,0.4) {};
 \draw[->,thick] (1,1) .. controls (1,0.25) and (2,0.5).. (2,-0.20);
   \node[point, ,white] at (1.5,0.15) {}; \draw[->,thick] (3,1) .. controls (3,0) and (3,0).. (3,-0.20);
\end{tikzpicture} 
\end{align}
We let the reader depict the generators $F\in 
  \on{B}^f (2)$, $R \in \on{B}^f
  (2)$ and $\Phi \in \on{B}^f (3)$ accordingly.

\subsection{Horizontal framed chord diagrams and rational framed associators}

\subsubsection{The operad of framed chord diagrams}

Let $\mathfrak{t}^f_{n}(\kk)$ denote the graded Lie algebra over $\kk$ generated by 
$t_{ij}$, $1\leq i, j\leq n$ with relations 
\begin{flalign}
& t_{ij}=t_{ji},\quad\textrm{ for  }1\leq i, j\leq n,     			                 			\tag{FS} \label{eqn:fS} \\
& [t_{ij},t_{kl}]=0,\quad\textrm{if }\{i,j\} \cap \{k,l\}=\emptyset \,,				\tag{FL} \label{eqn:fL} \\
& [t_{ij},t_{ik}+t_{jk}]=0,\quad\textrm{if }\{i,j\} \cap \{k\}=\emptyset\,. &	\tag{F4T} \label{eqn:f4T} 
\end{flalign}
It is easy to see that we have a decomposition 
$\mathfrak{t}^f_{n}(\kk)=\bigoplus_{i=1}^n \kk t_{ii} \oplus \t_n(\KK)$. 

\begin{remark}\label{remfra}
The above definition coincides with that appearing in \cite{BeG}, indeed it is 
isomorphic to the graded Lie algebra over $\kk$ generated by 
$t_{ij}$, $1\leq i\neq j\leq n$ and $t_k$, $1\leq k \leq n$, with relations 
\eqref{eqn:S}, \eqref{eqn:L}, \eqref{eqn:4T} and 
\begin{flalign}
& [t_{i},t_{j}]=0\quad\textrm{if }1\leq i, j\leq n,     			                 			\tag{FL'} \label{eqn:FL'} \\
& [t_{i},t_{jk}]=0\quad\textrm{if }1\leq i, j,k\leq n.    			   &              			\tag{FL''} \label{eqn:FL''}
\end{flalign}
\end{remark}

The Lie algebra $\t^f_n(\KK)$ is acted on by the symmetric group $\mathfrak{S}_n$, 
and one can show that the $\mathfrak{S}$-module in $grLie_\kk$ 
$$
\t^f({\KK}):=\{\t^f_n(\KK) \}_{n \geq 0}
$$
is provided with the structure of an operad in $grLie_\KK$. Partial compositions are defined as follows: 
for $I,J$ a finite sets and $k\in I$, 
 $$
  \begin{array}{cccccc}
\circ_k : & \t^f_{I}(\KK) \oplus \t^f_J(\KK)  & \longrightarrow & \t^f_{J\sqcup I-\{i\}}(\KK) \\
    & (0,t_{\alpha \beta}) & \longmapsto & t_{\alpha\beta} \\
 & (t_{ij},0) & \longmapsto & 
 \begin{cases}
  \begin{tabular}{ccccc}
  $t_{ij}$ & if & $ k\notin\{i,j\} $ \\
  $\sum\limits_{p\in J} t_{pj}$ & if & $k=i$ \\
  $\sum\limits_{p\in J} t_{ip}$ & if & $j=k$ 
  \end{tabular}
  \end{cases}
\end{array}
$$    
In particular, this has a translation into insertion-coproduct morphisms. 
We call $\t^f({\KK})$  the operad of \textit{infinitesimal framed braids}. 
We then consider the operad of \textit{framed chord diagrams}
$\mathbf{C}\mathbf{D}^f(\kk) \assign \hat{\mathcal{U}}
(\hat{\mathfrak{t}}^f (\kk))$ in $\mathbf{Cat(CoAlg_\KK)}$.

\begin{remark}
Morphisms in $\CD^f(\KK)(n)$ can be represented as linear combinations of diagrams 
of chords on $n$ vertical strands, where the chord diagram corresponding 
to $t_{ij}$ can be represented as in the unframed case, the chord corresponding 
to $t_{ii}$ as
\begin{align*}
\tik{  \node[point, label=above:$i$] at (0,-1) {}; \node[point, label=above:$n$] at (1,-1) {};
\node[point, label=above:$1$] at (-1,-1) {}; 
\node[point, label=below:$1$] at (-1,-2) {}; 
\node[point, label=below:$i$] at (0,-2) {}; \node[point, label=below:$n$] at (1,-2) {};
 \draw[->,thick] (-1,-1) to (-1,-2);  \draw[->,thick] (0,-1) to (0,-2); \draw[->,thick] (1,-1) to (1,-2);
  \draw[-,thick] (-0.1,-1.5) to (0.1,-1.5)  ;
  }
\end{align*}
and the composition is given by vertical concatenation of diagrams. 
Relations \eqref{eqn:S}, \eqref{eqn:L} and \eqref{eqn:4T} can be described as in 
the in the unframed case and the remaining relations defining each $\t^f_n(\KK)$
 can be represented as follows:
\begin{align}\tag{\ref{eqn:FL'}}
\tik{ 
\straight[->]{0}{1}; 
\straight[->]{1}{1};
\straight{0}{0};
\straight{1}{0};
 \draw[-,thick] (-0.1,-0.5) to (0.1,-0.5) ;
  \draw[-,thick] (0.9,-1.5) to (1.1,-1.5) ;
\node[point, label=above:$j$] at (1,0) {};
\node[point, label=above:$i$] at (0,0) {}; 
\node[point, label=below:$i$] at (0,-2) {}; 
\node[point, label=below:$j$] at (1,-2) {}; }
=
\tik{
\straight[->]{0}{1}; 
\straight[->]{1}{1};
\straight{0}{0};
\straight{1}{0} ;
\draw[-,thick] (-0.1,-1.5) to (0.1,-1.5) ;
  \draw[-,thick] (0.9,-0.5) to (1.1,-0.5) ;
\node[point, label=above:$j$] at (1,0) {}; 
\node[point, label=above:$i$] at (0,0) {}; 
\node[point, label=below:$i$] at (0,-2) {}; 
\node[point, label=below:$j$] at (1,-2) {}; }
\end{align}

\begin{align}\tag{\ref{eqn:FL''}}
\tik{ 
 \hori{0}{0}{1};
\straight[->]{0}{1}; 
\straight[->]{1}{1};
\straight{0}{0};
\straight{1}{0};
 \draw[-,thick] (-0.1,-1.5) to (0.1,-1.5) ;
\node[point, label=above:$j$] at (1,0) {};
\node[point, label=above:$i$] at (0,0) {}; 
\node[point, label=below:$i$] at (0,-2) {}; 
\node[point, label=below:$j$] at (1,-2) {}; }
=
\tik{\hori[->]{0}{1}{1};
\straight[->]{0}{1}; 
\straight[->]{1}{1};
\straight{0}{0};
\straight{1}{0} ;
\draw[-,thick] (-0.1,-0.5) to (0.1,-0.5) ;
\node[point, label=above:$j$] at (1,0) {}; 
\node[point, label=above:$i$] at (0,0) {}; 
\node[point, label=below:$i$] at (0,-2) {}; 
\node[point, label=below:$j$] at (1,-2) {}; }
\qquad ; \qquad 
\tik{ \hori{1}{0}{1};
\straight[->]{0}{1}; 
\straight[->]{1}{1};
\straight[->]{2}{1};
\straight{0}{0};
 \draw[-,thick] (-0.1,-1.5) to (0.1,-1.5) ;
\node[point, label=above:$j$] at (1,0) {}; 
\node[point, label=above:$k$] at (2,0) {};
\node[point, label=above:$i$] at (0,0) {}; 
\node[point, label=below:$i$] at (0,-2) {}; 
\node[point, label=below:$j$] at (1,-2) {}; 
\node[point, label=below:$k$] at (2,-2) {};}
=
\tik{
\hori[->]{1}{1}{1};
\straight{0}{0};
\straight{1}{0};
\straight{2}{0};
\straight[->]{0}{1};
 \draw[-,thick] (-0.1,-0.5) to (0.1,-0.5) ;
\node[point, label=above:$j$] at (1,0) {}; 
\node[point, label=above:$k$] at (2,0) {};
\node[point, label=above:$i$] at (0,0) {}; 
\node[point, label=below:$i$] at (0,-2) {}; 
\node[point, label=below:$j$] at (1,-2) {}; 
\node[point, label=below:$k$] at (2,-2) {};}
\end{align}
\end{remark}

\subsubsection{The operad $\PaCD^f(\kk)$ of parenthesized framed chord diagrams}\label{sec-pacdf}

As the operad $\CD^f(\KK)$ has only one object in each arity, we have an obvious 
terminal morphism of operads $\omega_1:\Pa=\on{Ob}(\Pa(\KK))\to\on{Ob}(\CD^f(\kk))$, 
and thus we can consider the operad 
$$
\PaCD^f(\KK):=\omega_1^\star \CD^f(\kk)
$$
in $\mathbf{Cat(CoAss_\KK)}$ of \textit{parenthesized framed chord diagrams}. 
More explicitely we have:
\begin{itemize}
\item $\on{Ob}(\PaCD^f(\KK)):=\Pa$,
\item $\on{Mor}_{\PaCD^f(\KK)(n)}(p,q):=\CD^f(\kk)$.
\end{itemize}

\begin{example}[Notable arrows in $\mathbf{PaCD}^f(\kk)$]
We have the following arrow $P^1$ in $\mathbf{PaCD}^f(\kk)(1)$ 
\begin{center}
$P^1=t_{11} \cdot$
\begin{tikzpicture}[baseline=(current bounding box.center)]
\tikzstyle point=[circle, fill=black, inner sep=0.05cm]
 \node[point, label=above:$1$] at (1,1) {};
 \node[point, label=below:$1$] at (1,-0.25) {};
 \draw[->,thick] (1,1) .. controls (1,0) and (1,0).. (1,-0.20); 
\end{tikzpicture}$:=$
\begin{tikzpicture}[baseline=(current bounding box.center)]
\tikzstyle point=[circle, fill=black, inner sep=0.05cm]
 \node[point, label=above:$1$] at (1,1) {};
 \node[point, label=below:$1$] at (1,-0.25) {};
 \draw[->,thick] (1,1) .. controls (1,0) and (1,0).. (1,-0.20); 
   \draw[-,thick] (0.9,0.5) to (1.1,0.5)  ;
\end{tikzpicture}
\end{center} 
as well as the following arrows in $\mathbf{PaCD}^f(\kk)(2)$ and $\mathbf{PaCD}^f(\kk)(3)$
\begin{center}
$P^{1,2}:=t_{11} \cdot$
\begin{tikzpicture}[baseline=(current bounding box.center)]
\tikzstyle point=[circle, fill=black, inner sep=0.05cm]
 \node[point, label=above:$1$] at (1,1) {};
 \node[point, label=below:$1$] at (1,-0.25) {};
 \node[point, label=above:$2$] at (2,1) {};
 \node[point, label=below:$2$] at (2,-0.25) {};
 \draw[->,thick] (2,1) to (2,-0.20); 
 \draw[->,thick] (1,1) to (1,-0.20);
\end{tikzpicture}$:= $
\begin{tikzpicture}[baseline=(current bounding box.center)]
\tikzstyle point=[circle, fill=black, inner sep=0.05cm]
 \node[point, label=above:$1$] at (1,1) {};
 \node[point, label=below:$1$] at (1,-0.25) {};
 \node[point, label=above:$2$] at (2,1) {};
 \node[point, label=below:$2$] at (2,-0.25) {};
 \draw[->,thick] (2,1) to (2,-0.20); 
 \draw[->,thick] (1,1) to (1,-0.20);
   \draw[-,thick] (0.9,0.5) to (1.1,0.5)  ;
\end{tikzpicture}
 \qquad 
$H^{1,2}:=t_{12} \cdot$
\begin{tikzpicture}[baseline=(current bounding box.center)]
\tikzstyle point=[circle, fill=black, inner sep=0.05cm]
 \node[point, label=above:$1$] at (1,1) {};
 \node[point, label=below:$1$] at (1,-0.25) {};
 \node[point, label=above:$2$] at (2,1) {};
 \node[point, label=below:$2$] at (2,-0.25) {};
 \draw[->,thick] (2,1) to (2,-0.20); 
 \draw[->,thick] (1,1) to (1,-0.20);
\end{tikzpicture} =
\begin{tikzpicture}[baseline=(current bounding box.center)]
\tikzstyle point=[circle, fill=black, inner sep=0.05cm]
 \node[point, label=above:$1$] at (1,1) {};
 \node[point, label=below:$1$] at (1,-0.25) {};
 \node[point, label=above:$2$] at (2,1) {};
 \node[point, label=below:$2$] at (2,-0.25) {};
 \draw[->,thick] (2,1) to (2,-0.20); 
  \draw[densely dotted, thick] (1,0.5) to (2,0.5); 
 \draw[->,thick] (1,1) to (1,-0.20);
\end{tikzpicture}
\\
$X^{1,2}=1 \cdot$ 
\begin{tikzpicture}[baseline=(current bounding box.center)]
\tikzstyle point=[circle, fill=black, inner sep=0.05cm]
 \node[point, label=above:$1$] at (1,1) {};
 \node[point, label=below:$2$] at (1,-0.25) {};
 \node[point, label=above:$2$] at (2,1) {};
 \node[point, label=below:$1$] at (2,-0.25) {};
 \draw[->,thick] (2,1) to (1,-0.20); 
 \draw[->,thick] (1,1) to (2,-0.20);
\end{tikzpicture}
\quad 
$a^{1,2,3}=1 \cdot$
\begin{tikzpicture}[baseline=(current bounding box.center)]
\tikzstyle point=[circle, fill=black, inner sep=0.05cm]
 \node[point, label=above:$(1$] at (1,1) {};
 \node[point, label=below:$1$] at (1,-0.25) {};
 \node[point, label=above:$2)$] at (1.5,1) {};
 \node[point, label=below:$(2$] at (3.5,-0.25) {};
 \node[point, label=above:$3$] at (4,1) {};
 \node[point, label=below:$3)$] at (4,-0.25) {};
 \draw[->,thick] (1,1) to (1,-0.20); 
 \draw[->,thick] (1.5,1) to (3.5,-0.20);
 \draw[->,thick] (4,1) to (4,-0.20);
\end{tikzpicture} 
\\
$\tilde{P}^{1,2}:=t_{22} \cdot$
\begin{tikzpicture}[baseline=(current bounding box.center)]
\tikzstyle point=[circle, fill=black, inner sep=0.05cm]
 \node[point, label=above:$1$] at (1,1) {};
 \node[point, label=below:$1$] at (1,-0.25) {};
 \node[point, label=above:$2$] at (2,1) {};
 \node[point, label=below:$2$] at (2,-0.25) {};
 \draw[->,thick] (2,1) to (2,-0.20); 
 \draw[->,thick] (1,1) to (1,-0.20);
\end{tikzpicture}$:=$
\begin{tikzpicture}[baseline=(current bounding box.center)]
\tikzstyle point=[circle, fill=black, inner sep=0.05cm]
 \node[point, label=above:$1$] at (1,1) {};
 \node[point, label=below:$1$] at (1,-0.25) {};
 \node[point, label=above:$2$] at (2,1) {};
 \node[point, label=below:$2$] at (2,-0.25) {};
 \draw[->,thick] (2,1) to (2,-0.20); 
 \draw[->,thick] (1,1) to (1,-0.20);
    \draw[-,thick] (0.9+1,0.5) to (1.1+1,0.5)  ;
\end{tikzpicture}
 \qquad 
$P^{12}:=(t_{11}+t_{22}) \cdot$
\begin{tikzpicture}[baseline=(current bounding box.center)]
\tikzstyle point=[circle, fill=black, inner sep=0.05cm]
 \node[point, label=above:$1$] at (1,1) {};
 \node[point, label=below:$1$] at (1,-0.25) {};
 \node[point, label=above:$2$] at (2,1) {};
 \node[point, label=below:$2$] at (2,-0.25) {};
 \draw[->,thick] (2,1) to (2,-0.20); 
 \draw[->,thick] (1,1) to (1,-0.20);
\end{tikzpicture} $:=$
\begin{tikzpicture}[baseline=(current bounding box.center)]
\tikzstyle point=[circle, fill=black, inner sep=0.05cm]
 \node[point, label=above:$1$] at (1,1) {};
 \node[point, label=below:$1$] at (1,-0.25) {};
 \node[point, label=above:$2$] at (2,1) {};
 \node[point, label=below:$2$] at (2,-0.25) {};
 \draw[->,thick] (2,1) to (2,-0.20); 
 \draw[->,thick] (1,1) to (1,-0.20);
    \draw[-,thick] (0.9,0.5) to (1.1,0.5)  ;
       \draw[-,thick] (0.9+1,0.5) to (1.1+1,0.5)  ;
\end{tikzpicture} 
\end{center}
\end{example}

\begin{remark}
The elements $a^{1,2,3}$, $X^{1,2}$, $H^{1,2}$ and $P^{1,2}$ are generators of 
$\mathbf{PaCD}^f(\kk)$ and satisfy the following relations:
\begin{itemize}
\item $X^{2,1}=(X^{1,2})^{-1}$,
\item $ \tilde{P}^{1,2}P^{1,2}=P^{12} $,
\item $P^{12}=P^{1,\emptyset} \circ_1 \on{Id}^{1,2}$,
\item $a^{12,3,4}a^{1,2,34} = a^{1,2,3}a^{1,23,4}a^{2,3,4}$,
\item $X^{12,3}=a^{1,2,3}X^{2,3}(a^{1,3,2})^{-1}X^{1,3}a^{3,1,2}$,
\item $H^{1,2}=X^{1,2}H^{2,1}(X^{1,2})^{-1}$,
\item $H^{12,3}=a^{1,2,3}H^{2,3}(a^{1,2,3})^{-1} + 
(X^{2,1})^{-1}a^{2,1,3}H^{1,3}(a^{2,1,3})^{-1}X^{2,1}$,
\end{itemize}
\end{remark}

\subsubsection{Rational framed associators}

\begin{definition}
A framed $\kk$-associator is an isomorphism between the operads 
$\widehat{\PaB}^f(\KK)$ and $G \PaCD^f(\KK)$ in 
$ \mathbf{Grpd}_{\kk}$ which is the identity on objects. We denote
$$
\Ass^f(\kk):=\on{Iso}^+_{\on{Op} \mathbf{Grpd}_{\kk}}(\widehat{\PaB}^f(\KK),G \PaCD^f(\KK))
$$
 the set of framed $\kk$-associators.
\end{definition}

\begin{proposition}
There is a one-to-one correspondence between the set of framed $\kk$-associators 
$\Ass^f(\kk)$ and the set $\on{Ass}(\kk)$ of $\kk$-associators.
\end{proposition}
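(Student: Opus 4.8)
The plan is to show that restriction along the inclusions of the unframed suboperads is a bijection $\Ass^f(\kk)\to\on{Ass}(\kk)$. Among the generators and relations of $\PaB^f$ listed in Theorem \ref{th:assf}, only the generator $F=F^{1,2}$ and the relation \eqref{eqn:F} are new: relations \eqref{eqn:R2}, \eqref{eqn:H1}, \eqref{eqn:H2}, \eqref{eqn:P} involve only $R$ and $\Phi$ and are exactly the defining relations of $\PaB$. Likewise the inclusion $\hat\t_n(\kk)\hookrightarrow\hat\t^f_n(\kk)$ is a morphism of operads in $grLie_\kk$ (inserting an off-diagonal generator $t_{ij}$, $i\neq j$, produces only off-diagonal terms), giving an operad inclusion $G\PaCD(\kk)\hookrightarrow G\PaCD^f(\kk)$; together with $\widehat{\PaB}(\kk)\hookrightarrow\widehat{\PaB}^f(\kk)$ this lets me restrict a framed associator $G$ to the unframed part. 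First I would check that $G$ carries $R^{1,2}$ and $\Phi^{1,2,3}$ into $G\PaCD(\kk)$: deleting any one strand sends $R^{1,2}$ and $\Phi^{1,2,3}$ to identity arrows (the trivial one-strand braid, resp. relation \eqref{eqn:R2}), and since $G$ commutes with these pointed deletion operations, the diagonal $t_{ii}$-coefficients of $G(R^{1,2})$ and $G(\Phi^{1,2,3})$ must vanish. Hence $\bar G:=G|_{\widehat{\PaB}(\kk)}$ is an identity-on-objects isomorphism onto $G\PaCD(\kk)$, i.e. an element of $\on{Ass}(\kk)$, recorded by a pair $(\mu,\varphi)$ through the correspondence underlying \eqref{bitorsor:cl}.

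For the inverse I would start from an arbitrary $(\mu,\varphi)\in\on{Ass}(\kk)$, so that $R^{1,2}\mapsto\on{exp}(\mu t_{12}/2)X^{1,2}$ and $\Phi^{1,2,3}\mapsto\varphi(t_{12},t_{23})a^{1,2,3}$. By the freeness statement of Theorem \ref{th:assf}, extending this to a morphism $\widehat{\PaB}^f(\kk)\to G\PaCD^f(\kk)$ amounts to prescribing the image of the single extra generator $F^{1,2}$, subject to \eqref{eqn:R1} and \eqref{eqn:F}; relations \eqref{eqn:H1}, \eqref{eqn:H2}, \eqref{eqn:P}, \eqref{eqn:R2} do not involve $F$ and are automatically inherited from the unframed associator. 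Since $F^{1,2}$ is an endomorphism of the object $(12)$, its image is a group-like element of the abelian group $\on{exp}(\hat\t^f_2(\kk))=\on{exp}(\kk t_{11}\oplus\kk t_{12}\oplus\kk t_{22})$, so I can write $G(F^{1,2})=\on{exp}(\alpha t_{11}+\beta t_{12}+\gamma t_{22})$. Deleting strand $1$ (relation \eqref{eqn:R1}, i.e. $F^{\emptyset,2}=\on{Id}^1$) forces $\gamma=0$, while fixing the arity-one part to the canonical isomorphism $F^1=F^{1,\emptyset}\mapsto P^1=\on{exp}(t_{11})$ sets $\alpha=1$.

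The crux of the argument, and the step I expect to be the main obstacle, is then relation \eqref{eqn:F}, namely $F^{1,2}R^{1,2}F^{2,1}R^{2,1}=F^{12}$. I would evaluate both sides in $\on{exp}(\hat\t^f_2(\kk))$: using that $t_{11},t_{22},t_{12}$ commute and that the two factors $R^{1,2},R^{2,1}$ contribute $\on{exp}(\mu t_{12})$, the left-hand side equals $\on{exp}(t_{11}+t_{22}+(2\beta+\mu)t_{12})$, whereas the right-hand side is the image of $F^{12}=F^1\circ_1\on{Id}^{1,2}$, which by operadic equivariance is the doubling $\circ_1$ applied to $\on{exp}(t_{11})$, i.e. $\on{exp}(t_{11}+t_{22}+2t_{12})$ (the framing of a cluster being the total infinitesimal braiding $t_{JJ}$). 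Comparing $t_{12}$-coefficients gives $\beta=1-\mu/2$, so that $G(F^{1,2})=\on{exp}\big(t_{11}+(1-\tfrac{\mu}{2})t_{12}\big)$ is uniquely determined by $\mu$. This shows the extension is forced, hence well defined; what remains is routine but essential, namely verifying that the resulting $G$ respects the full operadic and $\mathfrak{S}$-module structure in every arity and is invertible, so that it lies in $\Ass^f(\kk)$, and checking that restriction and extension are mutually inverse. The heart of the matter is exactly this uniqueness: once the arity-one framing is normalised, the framing datum contributes no parameter beyond $\mu$, and it is this rigidity that makes the forgetful map injective and collapses $\Ass^f(\kk)$ onto $\on{Ass}(\kk)$.
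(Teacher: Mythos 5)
Your overall strategy coincides with the paper's: use the presentation of Theorem \ref{th:assf} to reduce everything to the image of the single extra generator $F^{1,2}$, kill one coefficient with \eqref{eqn:R1}, and extract a linear condition from \eqref{eqn:F}. The restriction direction (vanishing of the $t_{ii}$-coefficients of $G(R^{1,2})$ and $G(\Phi^{1,2,3})$ via strand deletion) is fine, and in fact more carefully argued than in the paper. The gap is exactly at what you call the crux. Writing $G(F^{1,2})=\exp(\alpha t_{11}+\beta t_{12})$, the only constraints the presentation imposes are $\gamma=0$ from \eqref{eqn:R1} and the single equation $2\beta+\mu=2\alpha$ from \eqref{eqn:F}; no other relation of $\PaB^f$ involves $F$. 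So nothing determines $\alpha$ and $\beta$ separately. Your step ``fixing the arity-one part to the canonical isomorphism $F^1\mapsto\exp(t_{11})$ sets $\alpha=1$'' is a normalisation, not a consequence: $\widehat{\PaB}^f(1)(\kk)\cong(\kk,+)\cong\exp(\kk t_{11})$, and every $\alpha\in\kk^\times$ gives an identity-on-objects isomorphism in arity one. Hence the extension of a given $(\mu,\varphi)$ is \emph{not} forced, restriction is not injective as it stands, and ``restriction and extension are mutually inverse'' fails in the direction $\Ass^f(\kk)\to\Ass^f(\kk)$: your composite collapses the whole one-parameter family onto the representative with $\alpha=1$. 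The sentence ``This shows the extension is forced'' is precisely the unproved claim.

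For comparison, the paper resolves the same ambiguity by the opposite convention: it posits $H(F^{1,2})=e^{\lambda t_{11}}$ with no $t_{12}$-component (i.e.\ $\beta=0$) and then derives $\lambda=\mu/2$ from \eqref{eqn:F}. Note that this selects a \emph{different} point of the one-parameter family than yours ($\alpha=\mu/2$, $\beta=0$ versus $\alpha=1$, $\beta=1-\mu/2$), which is a symptom of the fact that some additional input is needed to pin down the remaining parameter. To close the gap you would need either an argument that the $t_{12}$-coefficient of $G(F^{1,2})$ vanishes (equivalently, that $G(F^{1,2})$ dies under the projection $\hat{\mathfrak{t}}^f_2(\kk)\to\hat{\mathfrak{t}}_2(\kk)$), or to restate the correspondence as one between $\on{Ass}(\kk)$ and framed associators with a chosen arity-one normalisation.
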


\begin{proof}

A morphism $\tilde{H}:\widehat{\PaB}^f(\KK) \longrightarrow G \PaCD^f(\KK)$ is uniquely determined 
by a morphism $H:{\PaB}^f\longrightarrow G \PaCD^f(\KK)$. Such a morphism is uniquely determined 
by two scalar parameters $\mu,\lambda\in\kk$ and $\varphi\in\on{exp}(\hat\t^f_2(\kk))$ 
such that we have the following assignment in the morphism sets of the parenthesized 
chord diagram operad $G\PaCD^f(\kk)$:
\begin{itemize}
\item $H(F^{1,2})=e^{\lambda t_{1}} \cdot \on{Id}^{1,2}$,
\item $H(R^{1,2})=e^{\mu t_{12}/2} \cdot X^{1,2}$,
\item $H(\Phi^{1,2,3})=\varphi \cdot a^{1,2,3}$\,,
\end{itemize}
where $F^{1,2},R^{1,2}$ and $\Phi^{1,2,3}$ are the generators of ${\PaB}^f$.
The triples$(\lambda, \mu,\varphi)$ then satisfy 
\begin{itemize}
\item $(\mu,\varphi)\in\on{Ass(\kk)}$,
\item $e^{\lambda (t_1 + t_2 + 2t_{12})} = e^{\lambda (t_1+ t_2) + \mu t_{12} }$.
\end{itemize}
From the last equation one can easily deduce (by using the map $\t^f_2 \to \t_2$ 
sending the $t_i$ to 0) that $2\mu=\lambda$, which in turn implies that condition 
$e^{\lambda (t_1 + t_2 + 2t_{12})} = e^{\lambda (t_1+ t_2 +2t_{12}) }$ is trivially satisfied as the $t_{i}$ are central. 
This finishes the proof.
\end{proof}

\begin{theorem}\label{fKZ}
The set $\Ass^f(\C)$ is non empty. 
\end{theorem}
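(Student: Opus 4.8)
The plan is to exhibit an explicit $\C$-point of $\Ass^f(\C)$, first by a short argument and then intrinsically as a monodromy. The short argument combines the previous proposition with Drinfeld's existence theorem: the previous proposition sets up a bijection between $\Ass^f(\C)$ and $\on{Ass}(\C)$, a framed associator $\tilde H$ being determined by a triple $(\lambda,\mu,\varphi)$ with $(\mu,\varphi)\in\on{Ass}(\C)$ and $\lambda$ pinned down by a single linear relation. Since Drinfeld proved that $\on{Ass}(\C)\neq\emptyset$ — the pair $(\mu,\varphi)=(2\pi i,\Phi_{\mathrm{KZ}})$ coming from the monodromy of the universal KZ connection is an associator — one obtains a framed $\C$-associator by taking the corresponding value of $\lambda$. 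This already proves the claim.

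To make the argument intrinsic and to match the monodromy construction announced in the introduction, I would instead produce this $\C$-point directly as the renormalized holonomy of a framed KZ connection. On the reduced framed configuration spaces $\Conf^f(\C,n)=\Conf(\C,n)\times\on{SO}(2)^{\times n}$, define the $\hat\t^f_n(\C)$-valued connection
\[ \nabla^f = \mathrm{d} - \frac{1}{2\pi i}\Big(\sum_{i<j} t_{ij}\,\mathrm{d}\log(z_i-z_j) + \sum_{i} t_{ii}\,\mathrm{d}\theta_i\Big), \]
where $\theta_i$ is the angular coordinate on the $i$-th $\on{SO}(2)$-factor. Flatness splits into two independent checks: the $\t_n$-part is the usual flat KZ connection, whose integrability is the infinitesimal braid relations \eqref{eqn:S}, \eqref{eqn:L} and \eqref{eqn:4T}, while the framing part is closed with central coefficients, since the $t_{ii}$ commute with every generator by \eqref{eqn:FL'} and \eqref{eqn:FL''}; for the same reason there are no mixed curvature terms.

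Transporting $\nabla^f$ between tangential (framed) base points sends the generators $F^{1,2},R^{1,2},\Phi^{1,2,3}$ of $\PaB^f$ from Theorem \ref{th:assf} to $e^{\lambda t_1}\on{Id}^{1,2}$, $e^{\mu t_{12}/2}X^{1,2}$ and $\Phi_{\mathrm{KZ}}\,a^{1,2,3}$, exactly the assignment $H$ of the previous proposition. It remains to verify the defining relations: \eqref{eqn:R2}, \eqref{eqn:H1}, \eqref{eqn:H2} and \eqref{eqn:P} are the pentagon and hexagon identities for $\Phi_{\mathrm{KZ}}$ (Drinfeld's theorem); \eqref{eqn:R1} holds by compatibility with arity reduction; and \eqref{eqn:F} reduces to the central identity $e^{\lambda(t_1+t_2+2t_{12})}=e^{\lambda(t_1+t_2)+\mu t_{12}}$ already analyzed there, which fixes the one linear relation between $\lambda$ and $\mu$. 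I expect the main obstacle to be analytic rather than algebraic: giving the holonomy of $\nabla^f$ a well-defined meaning with tangential base points, so that the integrals that diverge at the boundary of the ASFM compactification are renormalized coherently and the resulting parallel transport is genuinely operadic. This is precisely where one reuses Drinfeld's regularization for the $\t_n$-part; the framing part, being convergent and central, contributes only the commuting factors $e^{\lambda t_i}$ and causes no difficulty.
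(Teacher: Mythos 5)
Your proposal is correct and your main (monodromy) argument is essentially the paper's own proof: the paper likewise introduces the framed KZ form $w^{f\mathrm{KZ}}_n=\sum_i t_{ii}\,\mathrm{d}\log(\lambda_i)+\sum_{i<j}t_{ij}\,\mathrm{d}\log(z_i-z_j)$, checks flatness by exploiting the centrality of the $t_{ii}$, and reads off the holonomy $T^{f,\mathrm{KZ}}_x(\lambda_i)=e^{2\i\pi t_{ii}}$ on top of the regularized KZ monodromy. Your preliminary short argument --- deducing non-emptiness directly from the preceding proposition's bijection $\Ass^f(\kk)\simeq\on{Ass}(\kk)$ together with Drinfeld's theorem that $\on{Ass}(\C)\neq\emptyset$ --- is also valid and is in fact the quickest route; the paper simply prefers to exhibit the $\C$-point explicitly via the framed connection.
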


We will prove this statement in the following subsection by using the 
regularized monodromy of a framed version of the universal KZ connection.

\subsubsection{The framed universal KZ connection}

We use the conventions for principal bundles and monodromy actions from 
\cite[Appendix A]{CG}.  Define the framed universal KZ connection on the trivial $\exp
(\hat{\mathfrak{t}}^f_n)$-principal bundle over $\tmop{Conf}^f (\C, n)$
as the connection given by the holomorphic 1-form
\[ w^{f\tmop{KZ}}_n : = \underset{1 \leqslant i  \leqslant n}{\sum} 
t_{i i}\on{d}log(\lambda_i) + \underset{1 \leqslant i < j \leqslant n}{\sum}
   \frac{d z_i - d z_j}{z_i - z_j} t_{i j} \in \Omega^1 (\tmop{Conf}^f
   (\C, n), \mathfrak{t}^f_n), \]
which takes its values in $\mathfrak{t}^f_n$ and where the $\lambda_i\in \C^\times$, for all $1 \leq i \leq n$, are the
fiber coordinates.

\begin{proposition}
The connection $\nabla^{f\tmop{KZ}}_n:= \on{d} - w^{f\tmop{KZ}}_n$ is flat.
\end{proposition}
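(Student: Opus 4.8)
The plan is to show that the curvature of $\nabla^{f\tmop{KZ}}_n = \on{d} - w^{f\tmop{KZ}}_n$ vanishes. Since the connection has the standard form $\on{d}-w$ with $w$ valued in $\t^f_n$, its curvature two-form is $\Omega = \on{d}w^{f\tmop{KZ}}_n - w^{f\tmop{KZ}}_n\wedge w^{f\tmop{KZ}}_n$ (the wedge computed in the enveloping algebra $\hat{\mathcal U}(\hat\t^f_n(\kk))$, equivalently one half the graded Lie bracket), and flatness is equivalent to the Maurer--Cartan identity $\on{d}w^{f\tmop{KZ}}_n = w^{f\tmop{KZ}}_n\wedge w^{f\tmop{KZ}}_n$. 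First I would split the form as $w^{f\tmop{KZ}}_n = w^{\tmop{KZ}}_n + w^{\tmop{fr}}_n$, where $w^{\tmop{KZ}}_n = \sum_{i<j}\frac{\on{d}z_i-\on{d}z_j}{z_i-z_j}\,t_{ij}$ is the classical Knizhnik--Zamolodchikov form valued in $\hat\t_n(\kk)\subset\hat\t^f_n(\kk)$ and $w^{\tmop{fr}}_n = \sum_i t_{ii}\,\on{d}\!\log\lambda_i$ is the new framing contribution. Expanding $w^{f\tmop{KZ}}_n\wedge w^{f\tmop{KZ}}_n$ then produces four pieces, and the computation reduces to treating each.

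For the exterior derivative, each scalar one-form $\on{d}\!\log\lambda_i=\on{d}\lambda_i/\lambda_i$ on $\C^\times$ and each $\frac{\on{d}z_i-\on{d}z_j}{z_i-z_j}=\on{d}\!\log(z_i-z_j)$ is closed, and the coefficients $t_{ii},t_{ij}$ are constant Lie-algebra elements, so $\on{d}w^{f\tmop{KZ}}_n=0$ and it remains to show $w^{f\tmop{KZ}}_n\wedge w^{f\tmop{KZ}}_n=0$. Writing a generic term as a commutator times a wedge (antisymmetry of $\wedge$ turns $X_aX_b\,\alpha_a\wedge\alpha_b$ into $[X_a,X_b]\,\alpha_a\wedge\alpha_b$ for $a<b$), the framing--framing piece $w^{\tmop{fr}}_n\wedge w^{\tmop{fr}}_n$ collects $[t_{ii},t_{jj}]\,\on{d}\!\log\lambda_i\wedge\on{d}\!\log\lambda_j$, which vanishes by \eqref{eqn:FL'}, while the two mixed pieces $w^{\tmop{KZ}}_n\wedge w^{\tmop{fr}}_n+w^{\tmop{fr}}_n\wedge w^{\tmop{KZ}}_n$ collect $[t_{ij},t_{kk}]\,\on{d}\!\log(z_i-z_j)\wedge\on{d}\!\log\lambda_k$, which vanishes by \eqref{eqn:FL''}. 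Thus the framing generators $t_{ii}$ are central and contribute nothing, so $\Omega$ reduces to $-w^{\tmop{KZ}}_n\wedge w^{\tmop{KZ}}_n$.

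The remaining term $w^{\tmop{KZ}}_n\wedge w^{\tmop{KZ}}_n$ is exactly the curvature of the unframed KZ connection, and its vanishing is the classical Kohno--Drinfeld computation: the terms $[t_{ij},t_{kl}]\,\on{d}\!\log(z_i-z_j)\wedge\on{d}\!\log(z_k-z_l)$ with $\{i,j\}\cap\{k,l\}=\emptyset$ vanish by \eqref{eqn:fL}, while the terms sharing one index organize, for each triple $\{i,j,k\}$, into the combination governed by Arnold's identity $\omega_{ij}\wedge\omega_{jk}+\omega_{jk}\wedge\omega_{ki}+\omega_{ki}\wedge\omega_{ij}=0$ (with $\omega_{ij}=\on{d}\!\log(z_i-z_j)$ symmetric in $i,j$) together with the infinitesimal braid relation \eqref{eqn:f4T}. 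I expect this overlapping three-index cancellation to be the only genuinely computational step, since it is the one place where the relations among the $t_{ij}$ interact nontrivially with the geometry of the forms; the novel framing part, by contrast, is immediate precisely because the $t_{ii}$ are central. One subtlety to record is that $w^{\tmop{fr}}_n$ is a form on the total space of the bundle carrying the fiber coordinates $\lambda_i$, so I would verify that $\on{d}\!\log\lambda_i$ is genuinely closed there and that the splitting is compatible with the trivialization, after which flatness follows.
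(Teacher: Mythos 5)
Your proof is correct and takes essentially the same route as the paper's: both split $w^{f\tmop{KZ}}_n$ into the unframed KZ part and the framing part, observe that the centrality of the $t_{ii}$ (relations \eqref{eqn:FL'} and \eqref{eqn:FL''}) kills the framing--framing and mixed terms, and reduce to the classical Arnold--Kohno flatness of the unframed KZ form. Your handling of $\mathrm{d}w=0$ and of the fiber coordinates $\lambda_i$ is somewhat more explicit than the paper's, but the underlying argument is identical.
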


\begin{proof}
Let $w_1:= \underset{1 \leqslant i  \leqslant n}{\sum} t_i \on{d}log(\lambda_i)$ and 
$w_2:=\underset{1 \leqslant i < j \leqslant n}{\sum}
   \frac{d z_i - d z_j}{z_i - z_j} t_{i j}$.
We want to show that $[w_1+w_2,w_1+w_2]=0$.
We have 
\begin{align*}
[w_1+w_2,w_1+w_2] & =  [w_1,w_1]+[w_2,w_2]+[w_1,w_2]+[w_2,w_1] \\
& =  2[w_1,w_2]
\end{align*}
since $[w_1,w_1]=0$ because the relation (FT1), $[w_2,w_2]=0$ because of 
flatness of the unframed KZ connection, and $[w_2,w_1]+[w_2,w_1]=2[w_1,w_2]$. 
Next, because of relation (FT2), we have 
$$
[w_1,w_2]= [t_i \on{d}log(\lambda_i),\frac{d z_i - d z_j}{z_i - z_j} t_{i j}]+
\underset{1 \leqslant i < j \leqslant n}{\sum}
   [t_j \on{d}log(\lambda_i),\frac{d z_i - d z_j}{z_i - z_j} t_{i j}].
$$
And finally,
$$
\underset{1 \leqslant i < j \leqslant n}{\sum}
   [t_i \on{d}log(\lambda_i),\frac{d z_i - d z_j}{z_i - z_j} t_{i j}]+
   \underset{1 \leqslant i < j \leqslant n}{\sum}
   [t_j \on{d}log(\lambda_i),\frac{d z_i - d z_j}{z_i - z_j} t_{i j}] =0.
$$
This concludes the proof.
\end{proof}
In particular, we get morphism of 
splitting short exact sequences
\begin{equation}
\xymatrix{
1 \ar[r] & \kk^n \ar[r]\ar[d] & \widehat{\on{PB}}^f_{n}(\kk) \ar[d] \ar[r] 
& \widehat{\on{PB}}_{n}(\kk) \ar[d] \ar[r] & 1 \\
1 \ar[r] & \kk^n \ar[r] & \on{exp}(\hat{\t}^f_{n}(\kk)) \ar[r] 
& \on{exp}(\hat{\t}_{n}(\kk))  \ar[r] & 1
}
\end{equation}
showing that $ \widehat{\on{PB}}^f_{n}(\kk) \to \on{exp}(\hat{\t}^f_{n}(\kk))$ is a 
$\kk$-pro-unipotent group isomorphism. Similarly we get an isomorphism 
$$ \widehat{\on{B}}^f_{n}(\kk) \to \on{exp}(\hat{\t}^f_{n}(\kk))\rtimes \mathfrak{S}_n.
$$

\begin{proof}[Proof of Theorem \ref{fKZ}]
Let $x \in \tmop{Conf}^f (\mathbbm{C}, n)$ and let $T_x^{f, \tmop{KZ}}$ be the
parallel transport morphism associated to $\omega_{f, n}^{\tmop{KZ}}$. Then
\[ T_x^{f, \tmop{KZ}} (\lambda_i) = e^{2 \i \pi t_{ii}} \in \exp
   (\hat{\mathfrak{t}}_n^f)  \]
so that $T_x^{f, \tmop{KZ}} (f_i)=(T_x^{f, \tmop{KZ}} (\sigma_i))^2$.
\end{proof}

\subsection{The group $\GT^f$ and homotopy theory of the framed little disks operad}

\begin{definition}
The \textit{framed Grothendieck--Teichm\"uller group} is defined as the group 
$$
\GT^f:= \on{Aut}_{\on{Op} \mathbf{Grpd}}^{+}(\PaB^f)
$$
of automorphisms of the operad in groupoids $\PaB^f$ which are the identity of objects. 
One defines similarly the $\kk$-pro-unipotent version 
$$
\widehat{\GT}^f(\kk):= \on{Aut}_{\on{Op}\mathbf{Grpd}_\kk}^{+}\big(\widehat{\PaB}^f(\kk)\big)
$$
There are also pro-$\ell$ and profinite versions, denoted $\GT^f_{\ell}$ and $\widehat \GT^f$ 
respectively, defined by replacing the $\kk$-pro-unipotent completion of $\PaB$ by its 
pro-$\ell$ and profinite completions. 
\end{definition}

\begin{definition}
The \textit{graded framed Grothendieck--Teichm\"uller group} is the group 
$$
\GRT^f(\kk):=\on{Aut}_{\on{Op} \mathbf{Grpd}_{\kk}}^+(G\PaCD^f(\KK))
$$ 
of automorphisms of $G\PaCD^f(\KK)$ that are the identity on objects.
\end{definition}

By \cite[Lemma 7.7]{Ho2}, there is a group isomorphism
$$
\widehat{\GT}(\kk) \simeq \widehat{\GT}^f(\kk):=\on{Aut}_{\on{Op} 
\mathbf{Grpd}_{\kk}}^{+}(\widehat{\PaB}^f(\kk))
$$
and the fact that $\mathfrak{t}^f_{n}(\kk)=\bigoplus_{i=1}^n \kk t_i \oplus \t_n(\KK)$ 
gives us a further isomorphism
$$
\GRT(\kk) \simeq \GRT^f(\kk):=\on{Aut}_{\on{Op} \mathbf{Grpd}_{\kk}}^{+}(G\PaCD^f(\kk)).
$$

The above results permit to extend the results in \cite{Fresse2} in the following manner.

Consider the diagram
$$
\xymatrix{
\on{D}^f_2(n) \ar[r]^-{\simeq} \ar[d] & \Conf^f({\mathbb R}^2,n) \ar[d]  & 
\ar[l]_-{\simeq}\overline{\on{C}}^f(\RR^2,n) \ar[d] \\
\on{D}_2(n) \ar[r]^-{\simeq} & \on{Conf}({\mathbb R}^2,n)  & \ar[l]_-{\simeq} 
\overline{\on{C}}(\RR^2,n)
}
$$
where the horizontal arrows are $\mathfrak{S}_n$-equivariant homotopy equivalences
and the vertical arrows are $\tmop{SO} (2)^{\times n}$-principal bundles. 
This diagram does not enhance into an operad map. Nevertheless, in \cite{EH}, an 
operad morphism $\phi : \overline{\on{C}}(\RR^2,-) \longrightarrow \on{D}_2$ 
was constructed and it is easy to verify that $\phi $ is equivariant for the action of 
$\on{SO}(2)$ on these two operads and by construction, the data of the framings 
are compatible with this map (since the rotation of a disk will preserve that disk). 
Thus, we can construct a square
\begin{equation}\label{unoff}
\xymatrix{
\on{D}^f_2 \ar[d] & \ar[l]_-{\simeq}\overline{\on{C}}^f(\RR^2,-) \ar[d] \\
\on{D}_2  & \ar[l]_-{\simeq} \overline{\on{C}}(\RR^2,-) 
}
\end{equation}
where the horizontal arrows are weak equivalences of operads in topological
spaces (see \cite{EH} for details).

On the one hand, let $\on{C}^{\ast}_{\tmop{CE}} (\mathfrak{t}^f_{ n})$ be
the Chevalley-Eilenberg cochain complex of $\mathfrak{t}^f_{n}$. It is a
quasi-free commutative dg-algebra generated by the module $(\mathfrak{t}^f_{n})^{\vee}$ 
in degree 1. Now $\mathfrak{t}^f_{n}$ is equipped with a weight grading such 
that each $t_{ij}, 1\leq i,j\leq n$ is homogeneous of weight 1.

On the other hand, let $\Omega^{\ast} (\tmop{Conf}^f (\C, n))$ be
the de-Rham complex of $\tmop{Conf}^f (\C, n)$.
Now, as $\on{Conf}^f(\C,n)$ is a trivial $\on{SO}(2)^{\times n}$-principal bundle over $\on{Conf}(\C,n)$, 
then using the Kunneth formula and the cohomology of $\mathbb{S}^1$, one can then 
show that the dg-algebra quasi-isomorphism
$\kappa_n : \on{C}^*_{\on{CE}}(\t_n)  \longrightarrow  \on{H}^*(\on{C}(\C,n))$
extends into a dg-algebra quasi-isomorphism
\begin{eqnarray*}
\kappa^f_n : \on{C}^*_{\on{CE}}(\t_n^f) & \longrightarrow & \on{H}^*(\on{C}^f(\C,n)) \\
 t_{ij}^{\vee} & \longmapsto & [\omega_{ij}]:=[\on{d}log(z_i-z_j)] \\
  t_{ii}^{\vee} & \longmapsto & [\omega_{i}]:=[\on{d}log(\lambda_i)],
\end{eqnarray*}
and $\kappa^f_n(\alpha^\vee)=0$ when $\alpha$ has weight $>1$.
The collection of cochain complexes $\{\on{C}^*_{\on{CE}}(\t_n^f)\}_{n \geq 0}$ 
forms a Hopf dg-cooperad, with coproducts induced by the operadic compositions 
of $\mathfrak{t}^f$ and, as $\on{H}^*$ is lax monoidal, the collection 
$\{\on{H}^*(\on{C}^f(\C,n))\}_{n \geq 0}$ inherits a Hopf dg-cooperads structure.

One can show that the collection of quasi-isomorphisms $\kappa^f_n$ is 
compatible with cooperadic partial compositions and can be promoted into a 
quasi-isomorphism of Hopf dg-cooperads 
$$
\kappa^f_n : \on{C}^*_{\on{CE}}(\t^f)  \longrightarrow  \on{H}^*(\on{D}_2^f).
$$

Now let $G$ be the left adjoint functor of Sullivan's functor $A$ of piecewise linear differential forms, 
let $G_{\bullet}$ be its operadic enhancement (taking arguments in Hopf dg-cooperads)  and 
let $ L G_{\bullet}$ denote the derived functor of $G_{\bullet}$.

Then, the definition of the derived functor $L G_{\bullet}$, Maurer-Cartan theory, 
the construction of framed associators and the weak-equivalence 
$B(\PaB^f)\simeq \on{D}_2^f$ induce a sequence of operad morphisms
\begin{equation}\label{eqn:rat}
L G_{\bullet}\on{H}^*(\on{D}_2^f)=G_{\bullet}\on{C}^*_{\on{CE}}(\t^f)
\simeq B(G\CD^f) \leftarrow B(\PaB^f) \simeq \on{D}_2^f.
\end{equation}
Thus, as $\Assoc^f(\Q)\neq \emptyset$, $L G_{\bullet}\on{H}^*(\on{D}_2^f)$ represents a 
rationalization of the framed little 2-disc operad $\on{D}_2^f$.
Next, Fresse introduced an operadic replacement $A_\sharp$ of Sullivan's functor, 
showed that the the couple $(G_\bullet,A_\sharp)$ is a Quillen pair and showed that 
if the components of an operad $\mathcal{O}$ in simplicial sets have a degree-wise 
finitely generated cohomology, then we have a weak equivalence 
$A_\sharp(\mathcal{O})(n)\simeq A(\mathcal{O}(n))$ for each arity $n$ so that the assignment 
$\mathcal{O} \to \mathcal{O}_{\mathbbm{Q}}^{\wedge}:=L G_{\bullet}A(\mathcal{O})$ 
is equivalent to Sullivan's rationalization of $\mathcal{O}(n)$ arity-wise. 

Then equation \eqref{eqn:rat} induces a rational weak equivalence 
\begin{equation}
(\on{D}^f_{2})_{\mathbbm{Q}}^{\wedge} \longrightarrow L G_{\bullet} 
\on{H}^{\ast} (\on{D}^f_{ 2}),
\end{equation}
which in turn induces a weak-equivalence of Hopf dg-cooperads 
$A_\sharp(\on{D}_2^f) \simeq \on{H}^\ast(\on{D}_2^f)$.

Let $\tmop{Ho} (\on{OpTop})$ be the homotopy category of the category of operads 
in topological spaces. We can then sumarize the results of this section as follows.
\begin{theorem}
There is a torsor isomorphism 
 \begin{equation}\label{bitorsor:cl:fr}
(\widehat{\GT}^f(\mathbbm{Q}),\Assoc^f(\mathbbm{Q}),\GRT^f(\mathbbm{Q})) 
\longrightarrow (\widehat{\GT}(\mathbbm{Q}),\Assoc(\mathbbm{Q}),\GRT(\mathbbm{Q}))
\end{equation}
 and the following maps are bijections
   \begin{eqnarray}\label{Hom:fr:1}
   \Assoc^f(\mathbbm{Q}) & \longrightarrow & \tmop{Iso}_{\tmop{Ho}
    (\on{OpTop})} ((\on{D}^f_{2})_{\mathbbm{Q}}^{\wedge}, L G_{\bullet}
    \on{H}^{\ast} (\on{D}^f_{ 2})),
  \end{eqnarray}
   \begin{eqnarray}\label{Hom:fr:2}
    \widehat{\GT}^f(\mathbbm{Q}) & \longrightarrow &
    \tmop{Aut}_{\tmop{Ho} (\on{OpTop})} ((\on{D}^f_{
    2})_{\mathbbm{Q}}^{\wedge})
  \end{eqnarray}
\end{theorem}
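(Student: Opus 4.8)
The plan is to assemble the torsor isomorphism \eqref{bitorsor:cl:fr} from the three component maps already at our disposal, and then to deduce the bijections \eqref{Hom:fr:1} and \eqref{Hom:fr:2} by transporting Fresse's homotopy-theoretic description of (unframed) associators and Grothendieck--Teichm\"uller elements \cite{Fresse2} to the framed setting. For the first assertion I would combine the group isomorphism $\widehat{\GT}^f(\Q)\simeq\widehat{\GT}(\Q)$ of \cite[Lemma 7.7]{Ho2}, the group isomorphism $\GRT^f(\Q)\simeq\GRT(\Q)$ induced by the splitting $\t^f_n(\kk)=\bigoplus_{i=1}^n\kk t_i\oplus\t_n(\kk)$, and the one-to-one correspondence $\Ass^f(\Q)\leftrightarrow\Ass(\Q)$ established above. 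It then remains to check that these three maps are compatible with the bitorsor structure, that is, that the associator bijection intertwines the left $\widehat{\GT}$- and right $\GRT$-actions through the two group isomorphisms. Since all three maps are induced by the same structural data --- the forgetful map discarding the framing generators on the $\PaB^f$ side and the direct-sum decomposition on the $\t^f$ side --- this is a formal naturality check, carried out by tracing the pre- and post-composition action formulas through the explicit parametrization $(\lambda,\mu,\varphi)$ of the preceding proof.

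For the bijections \eqref{Hom:fr:1} and \eqref{Hom:fr:2}, the first step is to verify that the framed little $2$-disks operad meets the hypotheses of Fresse's rational homotopy theory of operads. Because $\RR^2$ is parallelizable, each space $\on{D}^f_2(n)$ is homotopy equivalent to $\on{Conf}(\C,n)\times(\mathbb{S}^1)^n$; its fundamental group is $\Z^n\times\on{PB}_n$, whose $\Q$-pro-unipotent completion is identified with $\on{exp}(\hat\t^f_n(\Q))$ by the splitting short exact sequence of the previous subsection, and its cohomology is degree-wise finite-dimensional, being the tensor product of the cohomology of a hyperplane-arrangement complement with that of a torus. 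These finiteness and completion properties are exactly what license Fresse's comparison $A_\sharp(\on{D}^f_2)(n)\simeq A(\on{D}^f_2(n))$ and hence the identification of $(\on{D}^f_2)_{\Q}^{\wedge}=LG_\bullet A(\on{D}^f_2)$ with the arity-wise Sullivan rationalization.

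The second step is to feed in the framed formality data already produced: the quasi-isomorphism of Hopf dg-cooperads $\kappa^f\colon\on{C}^*_{\on{CE}}(\t^f)\to\on{H}^*(\on{D}^f_2)$ together with the weak equivalence $B(\PaB^f)\simeq\on{D}^f_2$. Combined with the non-emptiness $\Ass^f(\Q)\neq\emptyset$ of Theorem \ref{fKZ}, the sequence \eqref{eqn:rat} shows that $LG_\bullet\on{H}^*(\on{D}^f_2)=G_\bullet\on{C}^*_{\on{CE}}(\t^f)$ is a model for this rationalization. Using the Quillen adjunction $(G_\bullet,A_\sharp)$ one then computes the derived operadic mapping space from $(\on{D}^f_2)_{\Q}^{\wedge}$ into $LG_\bullet\on{H}^*(\on{D}^f_2)$ as a Maurer--Cartan / homotopy-morphism space of Hopf dg-(co)operads, and I would identify its components of isomorphisms with $\Ass^f(\Q)$ via the definition $\Ass^f(\Q)=\on{Iso}^+_{\on{Op}\mathbf{Grpd}_{\Q}}(\widehat{\PaB}^f,G\PaCD^f)$ and the weak equivalences above; taking the source equal to the target then yields \eqref{Hom:fr:2}.

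The main obstacle will be the bookkeeping that matches the dg-side Maurer--Cartan description with the groupoid-theoretic definitions of $\Ass^f$ and $\widehat{\GT}^f$. Concretely, one must verify that the homotopy automorphisms detected by Fresse's machinery are exactly those that are the identity on objects (the superscript ``$+$''), and that the framing circles introduce no spurious components, i.e.\ that the framed mapping-space components coincide with the unframed ones. This is precisely where the splitting $\t^f_n=\bigoplus_i\kk t_i\oplus\t_n$ and the isomorphisms $\widehat{\GT}^f\simeq\widehat{\GT}$, $\GRT^f\simeq\GRT$ of the previous subsection must be invoked, so that the whole homotopy-theoretic comparison reduces, compatibly with the torsor structure, to Fresse's unframed statement.
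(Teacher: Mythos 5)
Your proposal is correct in substance and relies on the same two pillars as the paper --- the framed/unframed comparison isomorphisms and Fresse's unframed theorems --- but it organizes the second half of the argument differently. For the torsor isomorphism \eqref{bitorsor:cl:fr} you assemble the three component maps and check equivariance; the paper instead emphasizes non-emptiness of $\Assoc^f(\C)$ (Theorem \ref{fKZ}) to guarantee that the framed triple is genuinely a bitorsor, together with the group isomorphisms $\widehat{\GT}^f\simeq\widehat{\GT}$ and $\GRT^f\simeq\GRT$; both ingredients are in fact needed and the two write-ups are complementary rather than divergent. For the bijections \eqref{Hom:fr:1} and \eqref{Hom:fr:2} the routes genuinely differ: you propose to run Fresse's rational homotopy machinery directly on $\on{D}_2^f$ (verifying the finiteness hypotheses, using $\kappa^f$ and the Maurer--Cartan description of the derived mapping space, and only at the end invoking the splitting $\t^f_n=\bigoplus_i\kk t_{ii}\oplus\t_n$ to identify components), whereas the paper never re-runs the machinery in the framed setting: it factors each map through the chain $\Assoc^f(\Q)\to\Assoc(\Q)\to\tmop{Iso}_{\tmop{Ho}(\on{OpTop})}((\on{D}_2)^{\wedge}_{\Q},LG_\bullet\on{H}^*(\on{D}_2))\to\tmop{Iso}_{\tmop{Ho}(\on{OpTop})}((\on{D}^f_2)^{\wedge}_{\Q},LG_\bullet\on{H}^*(\on{D}^f_2))$, taking the middle map from \cite{Fresse2} and constructing the rightmost map as a section of the restriction map induced by the inclusion of the unframed data into the framed one. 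Your approach buys a self-contained framed statement and makes explicit why the circle factors contribute nothing (Kunneth plus centrality of the $t_{ii}$), at the cost of having to re-justify the identification of $\pi_0$ of the framed mapping space with $\Assoc^f(\Q)$ --- the step you rightly flag as the main obstacle, and which is nontrivial if done from scratch; the paper's approach avoids that entirely but leaves the verification that the section and the restriction compose to honest bijections rather terse. Either route closes once one observes, as you do, that the splitting reduces the framed mapping-space components to the unframed ones.
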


\begin{proof}
The fact that the map \eqref{bitorsor:cl:fr} is a torsor isomorphism is a 
straigthforward consequence of the fact that the set of complex associators 
is not empty, fact proven in Theorem \ref{fKZ}, the fact that 
$(\widehat{\GT}^f(\C),\Assoc^f(\C),\GRT^f(\C))$ has a natural torsor 
structure so that $\Assoc^f(\mathbbm{Q}$ is not empty and the fact 
that we have group isomorphisms $\widehat{\GT}^f(\C) \to
 \widehat{\GT}(\C)$ and $\GRT^f(\C)\to\GRT(\C)$.

The proof of the fact that \eqref{Hom:fr:1} and \eqref{Hom:fr:2} are bijections 
comes from the fact that the set $\Assoc^f(\mathbbm{Q})$ is not empty 
thus we have a chains of set morphisms
$$
\Assoc^f(\mathbbm{Q}) \to \Assoc(\mathbbm{Q}) \to \tmop{Iso}_{\tmop{Ho}
    (\on{OpTop})} ((\on{D}_{2})_{\mathbbm{Q}}^{\wedge}, L G_{\bullet}
    \on{H}^{\ast} (\on{D}_{ 2})) \to \tmop{Iso}_{\tmop{Ho}
    (\on{OpTop})} ((\on{D}^f_{2})_{\mathbbm{Q}}^{\wedge}, L G_{\bullet}
    \on{H}^{\ast} (\on{D}^f_{ 2}))
$$
and
$$
   \widehat{\GT}^f(\mathbbm{Q}) \to   \widehat{\GT}(\mathbbm{Q}) \to 
   \tmop{Aut}_{\tmop{Ho} (\on{OpTop})} ((\on{D}_{
    2})_{\mathbbm{Q}}^{\wedge}) \to
    \tmop{Aut}_{\tmop{Ho} (\on{OpTop})} ((\on{D}^f_{
    2})_{\mathbbm{Q}}^{\wedge}).
$$
The central morphisms are those constructed and proven to be isomorphisms by 
Fresse in \cite{Fresse2}, and the leftmost bijections are those coming 
from the isomorphism \eqref{bitorsor:cl:fr}. The rightmost maps are constructed as sections of the restriction maps 
\begin{eqnarray*}
\tmop{Iso}_{\tmop{Ho}
    (\on{OpTop})} ((\on{D}^f_{2})_{\mathbbm{Q}}^{\wedge}, L G_{\bullet}
    \on{H}^{\ast} (\on{D}^f_{ 2})) & \to & \tmop{Iso}_{\tmop{Ho}
    (\on{OpTop})} ((\on{D}_{2})_{\mathbbm{Q}}^{\wedge}, L G_{\bullet}
    \on{H}^{\ast} (\on{D}_{ 2})) \\
\tmop{Aut}_{\tmop{Ho} (\on{OpTop})} ((\on{D}^f_{
    2})_{\mathbbm{Q}}^{\wedge}) & \to &  \tmop{Aut}_{\tmop{Ho} (\on{OpTop})} ((\on{D}_{
    2})_{\mathbbm{Q}}^{\wedge})
\end{eqnarray*}
since $\PaB^f$ and $\t^f$ are quotients of $\PaB$ and $\t$.
\end{proof}

\section{Modules associated to framed configuration spaces (genus $g$ associators)}
\label{g framed associators}
\subsection{The module of parenthesized genus $g$ framed braidings}

\subsubsection{Compactified configuration spaces of surfaces}\label{Bgn}

Let $g \geq 0$ and $n
> 0$ be two integers and consider a compact topological oriented surface $\Sigma_g$ of
genus $g$.

The boundary $\partial \overline{\on{Conf}}^f(\Sigma_g,I) =
\overline{\on{Conf}}^f(\Sigma_g,I) - {  \tmop{Conf}}^f (\Sigma_g, I)$ is
made of the following irreducible components: for any decomposition $I = J_1 \coprod
\cdots \coprod J_k$ there is a component
\[ \partial_{J_1, \cdots, J_k} \overline{\on{Conf}}^f(\Sigma_g,I) \cong
   \prod_{i = 1}^k \overline{\on{C}}^f(\C, J_i) \times 
   \overline{\on{Conf}}^f(\Sigma_g,k) \hspace{0.17em} . \]
The inclusion of boundary components with respect to the direction of
the frame data provide $\overline{\on{Conf}}^f(\Sigma_g,-)$
with the structure of a module over the operad $\overline{\on{C}}^f(\C,-)$ in
topological spaces. 

We can represent the action of
$\overline{\on{C}}^f(\C,-)$ on $\overline{\on{Conf}}^f(\Sigma_g,-)$ as follows (in the case $g =
2$):
\begin{center}
\includegraphics[scale=1]{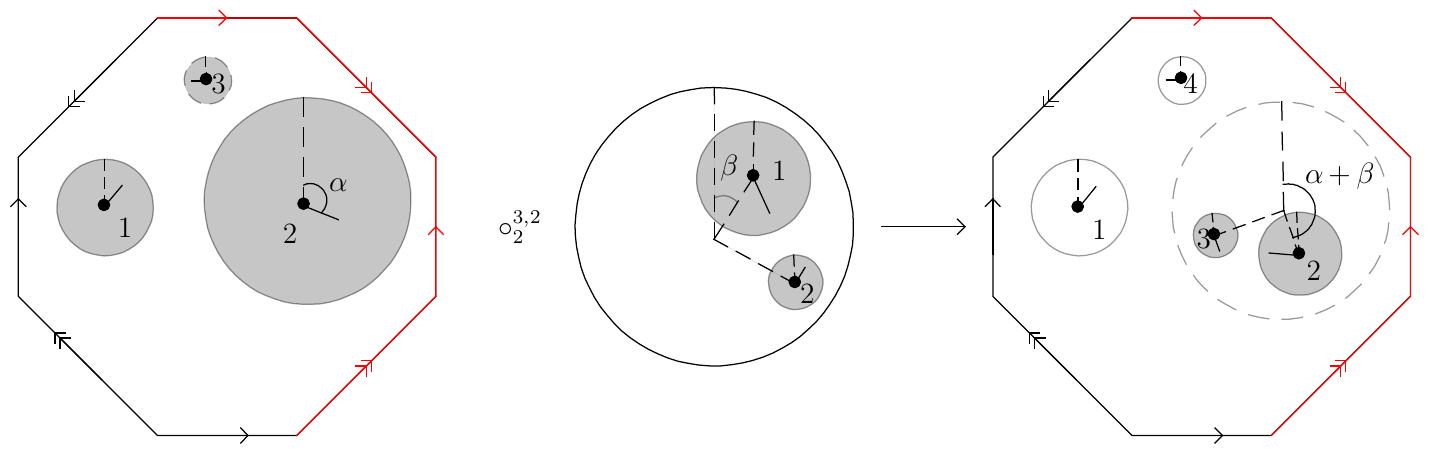}
\end{center}

\subsubsection{A presentation for surface framed braid groups}
\label{PBgnf}

We recall that composition of paths are read from left to right. 
In particular the commutator of two elements $A,B$, is $(A,B)=ABA^{-1}B^{-1}$.
For $n\geq1$, we denote $\on{PB}^f_{g,n}$ the fundamental group 
of $\on{Conf}^f(\Sigma_g,n)$ and we define the framed braid group 
on $\Sigma_g$ as the group 
$\on{B}^f_{g,n}$ generated by $$X_1^1, Y_1^1, \ldots, X_1^g, Y_1^g, \tau_1, 
\ldots, \tau_{n-1}, f_{1}, \ldots, f_{n},$$ together with the following relations 
\begin{flalign}
& \tau_i \tau_{i + 1} \tau_i = \tau_{i + 1} \tau_i \tau_{i + 1}\,,      		 
\quad \text{ for all } 1 \leq i \leq n-2,  \tag{T1} \label{eqn:T1} \\
& (\tau_i, \tau_j)= 1\,,      		\quad \text{ if } |i - j| > 1,  \tag{T2} \label{eqn:T2} \\
& f_if_j=f_jf_i\,,      		\quad \text{ for all }1 \leq i,j \leq n,  \tag{FT1} \label{eqn:FT1} \\
& \tau_i f_j = f_{j}\tau_i\,,       \quad \text{ for all }j \neq i,i+1   \tag{FT2} \label{eqn:FT2} \\
& \tau_i f_i = f_{i+1}\tau_i,\, \quad f_{i}\tau_i=\tau_i f_{i+1}  
\quad \text{ for all } 1 \leq i \leq n-2,   \tag{FB3} \label{eqn:FB3} \\
& (X^a_1,\tau_i)=(Y^a_1,\tau_i)=1, 
\quad \text{ for all } i=2,\ldots,n-1, \text{ and }1\leq a \leq g,   \tag{FBG1} \label{eqn:FBG1} \\
& (X_1^a,X_2^a)=(Y_1^a,Y_2^a)=1, \quad \text{ for } 1\leq a \leq g,  
\tag{FBG2} \label{eqn:FBG2} \\
& (X^a_2,Y^a_1)=\tau_1^2, \quad \text{ for } 1\leq a \leq g, 
\tag{FBG3} \label{eqn:FBG3} \\
&  (X_1^a,X_2^b)=(X_1^a,Y_2^b)=(Y_1^a,X_2^b)=(Y_1^a,Y_2^b)=1
 \quad \text{ for } 1 \leq b<a\leq g,  \tag{FBG4} \label{eqn:FBG4}  \\
& \prod_{a=1}^{g} ((X_1^{a})^{-1},Y^{a}_1)= \tau_1\cdots \tau_{n-2} 
\tau_{n-1}^2 \tau_{n-2} \cdots \tau_1 f_1^{2(g-1)}. & \tag{FBG5} \label{eqn:FBG5}
\end{flalign}
Here $X^a_{i+1} = \tau_iX_i^a \tau_i$ and $Y^a_{i+1} = \tau_iY^a_i\tau_i$
for $i=1,\ldots ,n-1$.

The corresponding geometric configuration of points and paths for 
the above presentation is the same that the one used in \cite{BeG} 
which we now recall. Let $\on{B}_{g,2n}$ be the fundamental group 
of $\tmop{Conf} (\Sigma_g, [2n])$ based at the point $\underline{p}=(p_1,\ldots,p_n)$ 
where the $p_i$ are aligned in the right-most $A$-generating cycle of $\Sigma_g$
 (see the picture below). It is generated by paths 
$\tilde X_1^1, \tilde Y_1^1, \ldots, \tilde X_1^g, \tilde Y_1^g, \sigma_1, 
\ldots, \sigma_{2n-1}$ corresponding geometrically in particular to 
the following paths:
\begin{center}
\includegraphics[scale=1]{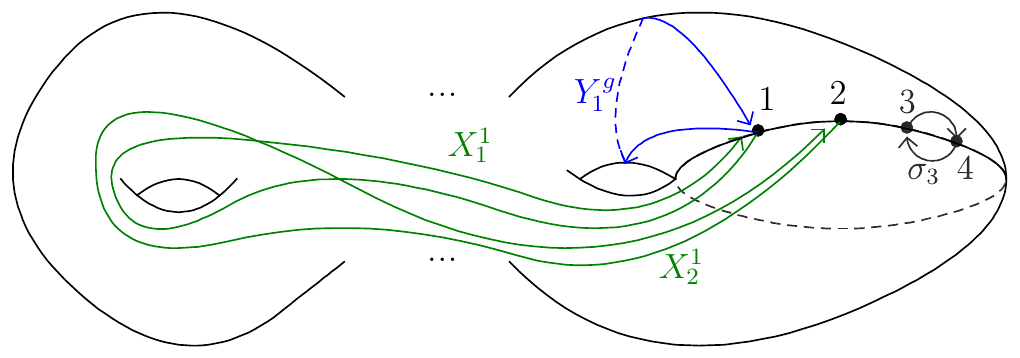}
\\ \text{The geometric configuration of $B_{g,2n}$, for $n=2$.}
\end{center}
There is a morphism $\on{B}_{g, n} \to \mathfrak{S}_n$ given by 
$\tilde X_1^a, \tilde Y_1^a \mapsto 1$, $\sigma_i
\mapsto s_i \assign (i, i + 1)$. It is proved in {\cite{Bell}} that the 
fundamental group $\pi_1 (\tmop{Conf} (\Sigma_g, n))$ is isomorphic 
to the genus $g$ pure braid group $\on{PB}_{g, n}$ which
is the kernel of this map and is generated by $\tilde X_i^a, \tilde Y_i^a$ 
($1 \leq i \leq n$, $1 \leq a \leq g$), where $Z_{i+1}^a =
\sigma_{i}  Z_i^a \sigma_{i}$ for $Z$ any of the letters $X, Y$.

Then, $\on{B}^f_{g,n}$ is seen as a subgroup of $\on{B}_{g,2n}$ and 
its generators can be written in terms of the generators of $\on{B}_{g,2n}$:
$$
X_1^a=\tilde X_1^a\tilde X_2^a \sigma_1^2, \quad Y_1^a=
\tilde Y_1^a\tilde Y_2^a \sigma_1^2.
$$

Let us assume that $g>1$. In \cite{BeG}, the authors showed that 
$\on{PB}^f_{g, n}$ can be exhibed as a non-splitting central extension
\begin{equation}\label{eq:pure framed braid and pure braid}
 1\longrightarrow \Z^{n}\longrightarrow \on{PB}^f_{g,n}\stackrel{\beta_{n}}{\longrightarrow}  \on{PB}_{g,n}\longrightarrow 1,
\end{equation}
where $\beta_{n}$ is the morphism induced by the projection map 
$\on{Conf}^f(\Sigma_{g},n)\to \on{Conf}(\Sigma_{g},n)$ 
({\em i.e.} $\beta_{n}$ consists in forgetting the framing). 
$\on{Conf}^f(\Sigma_{g},n)$ is an Eilenberg--Maclane space of type 
$(\on{PB}^f_{g,n},1)$.
This short exact sequence extends to the following non-split 
short exact sequence
\begin{equation}\label{eq:framed braid and braid}
1\longrightarrow \Z^{n}\longrightarrow \on{B}^f_{g,n}\stackrel{\widehat{\beta}_{n}}{\longrightarrow}  
\on{B}_{g,n}\longrightarrow 1\; ,
\end{equation}
where again $\widehat{\beta}_{n}$ consists in forgetting the framing. 
$\on{Conf}^f(\Sigma_{g},[n])$ is an Eilenberg--Maclane space 
of type $(\on{B}^f_{g,n},1)$.

\begin{definition}
  Let $\CoB^f_g$ the $
  \CoB^f$-module in groupoids with $\mathfrak{S}$-module of 
  objects $\mathfrak{S}$ and
  where, for $n \geqslant 1$, the morphisms of $
  \CoB^f_g (n)$ consists of isotopy classes of genus $g$ framed braids (i.e. elements
  of the braid group $\on{B}^f_{g, n}$) $\alpha$ together with a colouring bijection
  $i \mapsto \alpha_i$ between the index set $i \in \{ 1, \ldots, n \}$ which
  leaves the last strand uncoloured and the strands $\alpha_i \in \{ \alpha_1,
  \ldots, \alpha_n \}$ of our braid $\alpha$ and the data of a special braid
  corresponding to the framing.
\end{definition}

\subsubsection{The $\PaB^f$-module of parenthesized framed genus $g$ braids}

Let us choose an embedding $\mathbbm{S}^1 \hookrightarrow \Sigma_g$. 
To any finite set $I$ we associate the ASFM compactification
 $\overline{\textrm{Conf}}(\mathbb{S}^1,I)$ of the configuration space 
 $\textrm{Conf}(\mathbb{S}^1,I)$ of $\mathbb{S}^1$. 
 To any finite set $I$, we associate the framed ASFM compactification $
\overline{\on{Conf}}^f(\Sigma_g,I)$ of ${ \tmop{Conf}}^f (\Sigma_g, I)$.
The inclusion of boundary components provide $\overline{\rm Conf}(\mathbb{S}^1,-)$ 
with the structure of a module over the operad 
$\overline{\rm C}(\mathbb{R},-)$ in $\mathbf{Top}$.

 We
have inclusions of topological modules
\[ \Pa\subset 
   \overline{ \on{Conf}} (\mathbbm{S}^1, -) \hspace{0.17em} \subset \hspace{0.17em}
   \overline{\on{Conf}}^f(\Sigma_g,-), \]
   over the topological operads 
\[ \Pa\subset 
   \overline{ \on{C}} (\RR, -) \hspace{0.17em} \subset \hspace{0.17em}
   \overline{\on{C}}^f(\C,-) . \]
We then define
\[ \PaB^f_g \assign \pi_1
   (\overline{\on{Conf}}^f(\Sigma_g,-),
   \Pa) \hspace{0.17em}, \]
which is a $\PaB^f$-module in
groupoids.

As all our modules are considered to be pointed, there is a map of 
$\mathfrak{S}$-modules $\PaB^f \longrightarrow\PaB^f_g$ ane we 
abusively denote $R^{1,2}$, $\tilde R^{1,2}$, $\Phi^{1,2,3}$ and 
$F^{1,2}$ the images in $\PaB^f_g$ of the corresponding arrows in 
$\PaB^f$. Notice that in this case, $\PaB^f_g(1)$ is not the trivial groupoid so the choice 
of the pointing here is not functorial (appart from the reduced elliptic case studied in \cite{CaGo2}).

\begin{example}[Structure of $\PaB^f_g(1)$]\label{Exbgn}

As opposed to the unframed reduced genus 1 case studied in \cite{CaGo2}, 
we have non trivial arrows in arity 1. More precisely, we have $2 g$ 
automorphisms, $A^1_a$ and $B^1_a\in\on{Aut}_{\PaB^f_g(1)}(1)$,  for all 
$1 \leqslant a \leqslant g$, corresponding to the inverse generating loops in 
$ \overline{\on{Conf}}^f(\Sigma_g,1)$. Here is a picture for $A^1_1$ and $B^1_1$ for $g=2$:
\begin{center}
\includegraphics[scale=1]{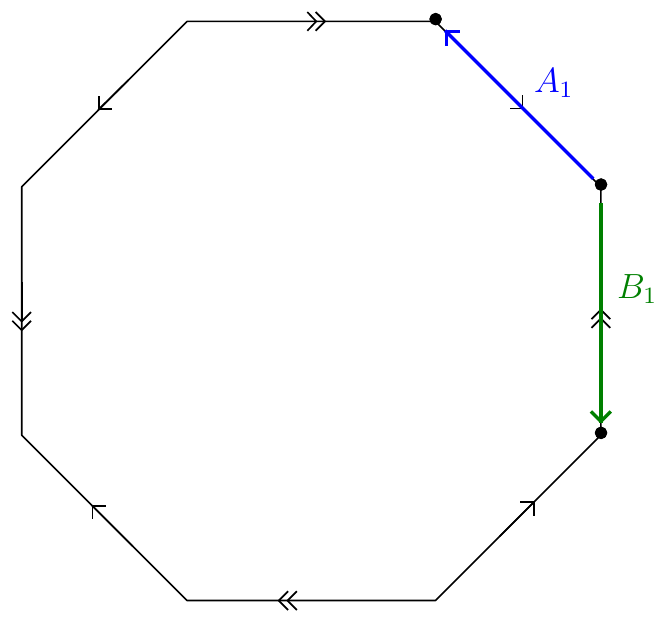}
\end{center}
All other $A^1_a$ and $B^1_a$ are depicted in the same way. We will 
formally depict these arrows as \textit{pic diagrams} pointing to the left, expressing the fact that 
the paths considered go in the opposite direction of the generating paths considered in $\pi_1(\Sigma_g)$:
\begin{center}
\begin{tikzpicture}[baseline=(current bounding box.center)]
\tikzstyle point=[circle, fill=black, inner sep=0.05cm] 
\node[point, label=above:$1$] at (1,1) {};
 \node[point, label=below:$1$] at (1,0) {};
 \draw[-,thick] (1,1) .. controls (1,0.5) and (1,0.5).. (0.5,0.5); 
 \draw[->,thick] (0.5,0.5) .. controls (1,0.5) and (1,0.5).. (1,0.05); 
\node[point, white, label=left:$A_a$] at (0.5,0.5) {};
\end{tikzpicture} $\qquad  \qquad \qquad $
\begin{tikzpicture}[baseline=(current bounding box.center)]
\tikzstyle point=[circle, fill=black, inner sep=0.05cm] 
\node[point, label=above:$1$] at (1,1) {};
 \node[point, label=below:$1$] at (1,0) {};
 \draw[-,thick] (1,1) .. controls (1,0.5) and (1,0.5).. (0.5,0.5); 
 \draw[->,thick] (0.5,0.5) .. controls (1,0.5) and (1,0.5).. (1,0.05); 
\node[point, white, label=left:$B_a$] at (0.5,0.5) {};
\end{tikzpicture} 
\end{center}

\end{example}

\begin{remark}
One has to be careful with the above notation. Indeed, taking into acount 
the framing data for, say, $A^1_1$, the above geometrical picture corresponds 
to the following
\begin{center}
\includegraphics[scale=1]{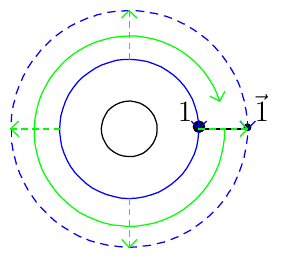}
\end{center}
In other words, seeing $\on{Aut}_{\PaB^f_g(1)}(1)=\on{PB}_{g,1}$ as a 
subgroup of $\on{B}_{g,2}$, the element $A_1$ is identified with the composite 
$X_1^1X_2^1 \sigma_1^2$.
\end{remark}

\begin{example}[Structure of $\PaB^f_g
(2)$]\label{Exbgn2}
We have $4 g$ automorphisms, $A_a^{1,2}, \tilde A_a^{1,2}, B_a^{1,2}$ and 
$\tilde B_a^{1,2}\in\on{End}_{\PaB^f_g(2)}(12)$, for all $1 \leqslant a \leqslant g$ 
corresponding, for the case of $A_1^{1,2}$ and $\tilde B_1^{1,2}$  to the 
following paths in $\overline{\on{Conf}}^f(\Sigma_g,2)$ (we neglect for 
simplicity the framing data):
\begin{center}
\includegraphics[scale=1]{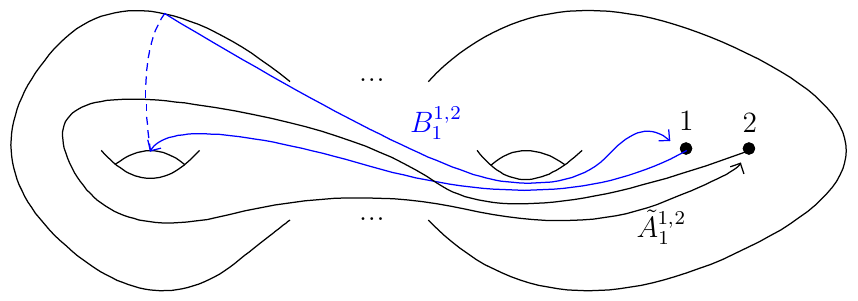}
\end{center}
We will represent the paths $A_a^{1,2}$ and $B_a^{1,2}$ as follows:
\begin{center}
\begin{align}
\begin{tikzpicture}[baseline=(current bounding box.center)]
\tikzstyle point=[circle, fill=black, inner sep=0.05cm] 
\node[point, label=above:$1$] at (1,1) {};
 \node[point, label=below:$1$] at (1,0) {};
 \node[point, label=above:$2$] at (1.5,1) {};
 \node[point, label=below:$2$] at (1.5,0) {};
 \draw[-,thick] (1,1) .. controls (1,0.5) and (1,0.5).. (0.5,0.5); 
 \draw[->,thick] (0.5,0.5) .. controls (1,0.5) and (1,0.5).. (1,0.05); 
 \draw[-,thick] (1.5,1) .. controls (1.5,0.5) and (1.5,0.5).. (1.5,0.5); 
 \draw[->,thick] (1.5,0.5) .. controls (1.5,0.5) and (1.5,0.5).. (1.5,0.05); 
\node[point, white, label=left:$A_a$] at (0.5,0.5) {};
\end{tikzpicture} \qquad 
\begin{tikzpicture}[baseline=(current bounding box.center)]
\tikzstyle point=[circle, fill=black, inner sep=0.05cm] 
\node[point, label=above:$1$] at (1,1) {};
 \node[point, label=below:$1$] at (1,0) {};
 \node[point, label=above:$2$] at (1.5,1) {};
 \node[point, label=below:$2$] at (1.5,0) {};
 \draw[-,thick] (1,1) .. controls (1,0.5) and (1,0.5).. (0.5,0.5); 
 \draw[->,thick] (0.5,0.5) .. controls (1,0.5) and (1,0.5).. (1,0.05); 
 \draw[-,thick] (1.5,1) .. controls (1.5,0.5) and (1.5,0.5).. (1.5,0.5); 
 \draw[->,thick] (1.5,0.5) .. controls (1.5,0.5) and (1.5,0.5).. (1.5,0.05); 
\node[point, white, label=left:$B_a$] at (0.5,0.5) {};
\end{tikzpicture} 
 \end{align}
\end{center}
All other $A_a^{1,2}$ and $B_a^{1,2}$ are depicted along the same 
representation as that for $B_1^{1,2}$.

Moreover, $\tilde A_a^{1,2}$ and $\tilde B_a^{1,2}$ can also  be depicted as follows
$$\begin{tikzpicture}[baseline=(current bounding box.center)]
\tikzstyle point=[circle, fill=black, inner sep=0.05cm] 
\node[point, label=above:$1$] at (1,1) {};
 \node[point, label=below:$1$] at (1,0) {};
 \node[point, label=above:$2$] at (2,1) {};
 \node[point, label=below:$2$] at (2,0) {};
 \draw[-,thick] (1,1) .. controls (1,0.5) and (1,0.5).. (1,0.5); 
 \draw[->,thick] (1,0.5) .. controls (1,0.5) and (1,0.5).. (1,0.05); 
 \draw[-,thick] (1+1,1) .. controls (1+1,0.5) and (1+1,0.5).. (1.5+1,0.5); 
 \draw[->,thick] (1.5+1,0.5) .. controls (1+1,0.5) and (1+1,0.5).. (1+1,0.05); 
\node[point, white, label=left:$\tilde A_a$] at (2,0.5) {};
\end{tikzpicture} \qquad \begin{tikzpicture}[baseline=(current bounding box.center)]
\tikzstyle point=[circle, fill=black, inner sep=0.05cm] 
\node[point, label=above:$1$] at (1,1) {};
 \node[point, label=below:$1$] at (1,0) {};
 \node[point, label=above:$2$] at (2,1) {};
 \node[point, label=below:$2$] at (2,0) {};
 \draw[-,thick] (1,1) .. controls (1,0.5) and (1,0.5).. (1,0.5); 
 \draw[->,thick] (1,0.5) .. controls (1,0.5) and (1,0.5).. (1,0.05); 
 \draw[-,thick] (1+1,1) .. controls (1+1,0.5) and (1+1,0.5).. (1.5+1,0.5); 
 \draw[->,thick] (1.5+1,0.5) .. controls (1+1,0.5) and (1+1,0.5).. (1+1,0.05); 
\node[point, white, label=left:$\tilde B_a$] at (2,0.5) {};
\end{tikzpicture} $$
\end{example}

\begin{remark}
Doubling the braid $A_a\in\PaB^f_g(1)$ amounts to taking 
$\circ_1(A_a,\on{id}_{12})\in\PaB^f_g(2)$, we get an arrow $A_a^{12}$ depicted as follows:
$$
\begin{tikzpicture}[baseline=(current bounding box.center)]
\tikzstyle point=[circle, fill=black, inner sep=0.05cm] 
\node[point, label=above:$1$] at (1,1) {};
 \node[point, label=below:$1$] at (1,0) {};
 \node[point, label=above:$2$] at (1.5,1) {};
 \node[point, label=below:$2$] at (1.5,0) {};
 \draw[-,thick] (1,1) .. controls (1,0.5) and (1,0.5).. (0.5,0.5); 
 \draw[->,thick] (0.5,0.5) .. controls (1,0.5) and (1,0.5).. (1,0.05); 
 \draw[-,thick] (1.5,1) .. controls (1.5,0.5) and (1.5,0.5).. (1,0.5); 
 \draw[->,thick] (1,0.5) .. controls (1.5,0.5) and (1.5,0.5).. (1.5,0.05); 
\node[point, white, label=left:$A_a$] at (0.5,0.5) {};
\end{tikzpicture}
$$
As both braids represent actually ribbon braids, then it is then a fact that 
\begin{center}
\begin{align}
A_a^{12}(A_a^{1,2})^{-1} (R^{1,2} R^{2,1})^{-1}=
   \begin{tikzpicture}[baseline=(current bounding box.center)]
\tikzstyle point=[circle, fill=black, inner sep=0.05cm] 
\node[point, label=above:$1$] at (1,1) {};
 \node[point, label=below:$1$] at (1,0) {};
 \node[point, label=above:$2$] at (2.5,1) {};
 \node[point, label=below:$2$] at (2.5,0) {};
 \draw[-,thick] (1,1) .. controls (1,0.5) and (1,0.5).. (1,0.5); 
 \draw[->,thick] (1,0.5) .. controls (1,0.5) and (1,0.5).. (1,0.05); 
 \draw[-,thick] (2.5,1) .. controls (2.5,0.5) and (2.5,0.5).. (2,0.5); 
 \draw[->,thick] (2,0.5) .. controls (2.5,0.5) and (2.5,0.5).. (2.5,0.05); 
\node[point, white, label=left:$A_a$] at (2,0.5) {};
\end{tikzpicture} 
 \end{align}
\end{center}
This means that, contrary to the reduced genus 1 case, 
$A_a^{1,2}$ and $\tilde A_a^{1,2}$ are not equal.
Nevertheless, one can retrieve the latter arrow from the first one : 
 $$\tilde A_a^{1,2}=(A_a^{12})^{-1}A_a^{1,2} R^{1,2} R^{2,1}.$$
\end{remark}

The following theorem can be undestood as a rephrasing of the
MacLane-Joyal-Street coherence theorem for framed genus $g$
$\on{D}_2$-modules.
\begin{theorem}\label{Thm:PaBfg}
  As a $\PaB^f$-module in
    groupoids having $ \textbf{\tmop{\Pa}}$ as
    $ \textbf{\tmop{\Pa}}$-module of objects, $\PaB^f_g$ is
freely generated by
$A_a^{1, 2}$ and $B_a^{1, 2}$, for $1
    \leqslant a \leqslant g$, in $\on{Aut}_{\PaB^f_g(2)}(12)$,
together with the following relations, for all $1 \leq a \leq g$ and $Z$ any of the letters $A,B$:
\begin{flalign}
&  Z_a^{\emptyset,1}=\on{Id}^{1}, \quad\Big(\textrm{in }\mathrm{End}_{\PaB_g^f(1)}\big(1\big)\Big)\,,			\tag{R$_g$}\label{eqn:gR} \\
& Z_a^{(12)3}=\Phi^{1,2,3}Z_a^{1,23} R^{1,23}\Phi^{2,3,1}Z_a^{2,31} R^{2,31}\Phi^{3,1,2}Z_a^{3,12} R^{3,12},				\tag{D$_g$}\label{eqn:gD} \\
& \on{Id}^{12,3} =\left(\Phi^{1,2,3}Z_b^{1,23}(\Phi^{1,2,3})^{-1}, R^{1,2}\Phi^{2,1,3}Z_a^{2,13}(\Phi^{2,1,3})^{-1} R^{2,1}  \right)\,,	 \quad \text{for } (1\leq a<b \leq g),
\tag{N$_g$}\label{eqn:gN}  \\
& R^{1,2}R^{2,1}=\left(R^{1,2}\Phi^{2,1,3}A_a^{2,13}(\Phi^{2,1,3})^{-1} R^{2,1},\Phi^{1,2,3}B_a^{1,23}(\Phi^{1,2,3})^{-1} \right)\,,		&
\tag{E1$_g$}\label{eqn:gE1} 
\end{flalign}
as relations holding in the automorphism group of $(12)3$ in $\PaB_g^f(3)$, and
\begin{flalign}
&R^{1,2}R^{2,1}(F^{1,2})^{2(g-1)}=\prod_{a=1}^g\left((A_{a}^{1,2})^{-1},B_{a}^{1,2}\right), &	\tag{E2$_g$}\label{eqn:gE2}
\end{flalign}
as a relation holding in the automorphism group of $(12)$ in $\PaB_g^f(2)$.
\end{theorem}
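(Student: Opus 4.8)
The plan is to realise $\PaB^f_g$ as the free $\PaB^f$-module in groupoids on the arrows $A_a^{1,2},B_a^{1,2}$ subject to the listed relations. Concretely, let $\mathcal M$ be the $\PaB^f$-module in groupoids with $\Pa$ as $\Pa$-module of objects, freely generated by $A_a^{1,2},B_a^{1,2}\in\on{Aut}_{\mathcal M(2)}(12)$ modulo \eqref{eqn:gR}, \eqref{eqn:gD}, \eqref{eqn:gN}, \eqref{eqn:gE1}, \eqref{eqn:gE2}; its morphisms are, by construction, words in the $\PaB^f$-action (supplying the images of $R^{1,2},\Phi^{1,2,3},F^{1,2}$) and the new generators. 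First I would check that the evident assignment sending each generator to the homotopy class of the corresponding path in $\overline{\on{Conf}}^f(\Sigma_g,-)$ respects the five relations; this is the geometric \emph{well-definedness} direction, verified by drawing the framed surface braids, and it produces a comparison morphism $\Psi:\mathcal M\to\PaB^f_g$ of $\PaB^f$-modules which is the identity on objects. The whole theorem then amounts to proving that $\Psi$ is an isomorphism.

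Since $\Psi$ is the identity on the object module $\Pa$, it suffices to show it is bijective on every morphism set. I would first observe that each groupoid $\PaB^f_g(n)$ is connected: any two objects are joined by a \emph{structural} arrow assembled from $R^{1,2}$ (which permutes marked points) and $\Phi^{1,2,3}$ (which reparenthesizes), all of which lie in the image of $\Psi$. Consequently every arrow with source $p$ is the composite of a fixed structural arrow with an element of $\on{Aut}_{\PaB^f_g(n)}(p)$, and it is enough to treat the automorphism group of the standard object $p_n=(\cdots((12)3)\cdots)n$ in each arity. As $\overline{\on{Conf}}^f(\Sigma_g,[n])$ is an Eilenberg--Maclane space $K(\on{PB}^f_{g,n},1)$, one has $\on{Aut}_{\PaB^f_g(n)}(p_n)\simeq\on{PB}^f_{g,n}$, so the problem reduces to matching the presentation built into $\mathcal M$ with the presentation of the surface framed braid groups recalled in \S\ref{PBgnf}.

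For surjectivity and the dictionary, the $\PaB^f$-action supplies the braiding generators $\tau_i$ (inserted copies of $R^{1,2}$) and the framing generators $f_i$ (copies of $F^{1,2}$), which already generate the subgroup $\on{PB}^f_n=\Z^n\times\on{PB}_n$, while $A_a^{1,2},B_a^{1,2}$ supply $X_1^a,Y_1^a$; the derived generators $X_i^a,Y_i^a$ are recovered through the conjugation rule $Z^a_{i+1}=\tau_iZ^a_i\tau_i$, which is exactly what the doubling relation \eqref{eqn:gD} encodes operadically. For completeness of the relations I would verify, term by term under this dictionary, that the defining relations of $\on{B}^f_{g,n}$ are consequences of the module relations together with the presentation of $\PaB^f$ from Theorem \ref{th:assf} and the module axioms: the braid and framing relations \eqref{eqn:T1}, \eqref{eqn:T2}, \eqref{eqn:FT1}, \eqref{eqn:FT2}, \eqref{eqn:FB3} descend from \eqref{eqn:H1}, \eqref{eqn:H2}, \eqref{eqn:P}, \eqref{eqn:F}; the far-commutation relations \eqref{eqn:FBG1} are automatic from operadic locality; the commutation relations \eqref{eqn:FBG2} and \eqref{eqn:FBG4} correspond to \eqref{eqn:gN} (together with \eqref{eqn:gD}); the relation \eqref{eqn:FBG3} corresponds to \eqref{eqn:gE1}; and the global relation \eqref{eqn:FBG5} corresponds, in arity $2$, to \eqref{eqn:gE2}, with the framing contribution $f_1^{2(g-1)}$ matched by $(F^{1,2})^{2(g-1)}$. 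The normalization \eqref{eqn:gR} is just the pointing condition.

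The main obstacle I expect is the final step: showing that the finite, low-arity family \eqref{eqn:gR}--\eqref{eqn:gE2} operadically generates the full family \eqref{eqn:FBG1}--\eqref{eqn:FBG5} for every $n$, i.e. that no extra relations appear in higher arity. This is the genuine coherence content of the theorem, and I would settle it by induction on the arity, using \eqref{eqn:gD} to move every genus generator onto a single strand so that any relation in $\on{Aut}_{\PaB^f_g(n)}(p_n)$ is reduced to relations supported in arities $\leq 3$, exactly as in the MacLane--Joyal--Street coherence argument and as carried out for $g=1$ in \cite{CaGo2}. A secondary but essential subtlety is the non-split central extension \eqref{eq:framed braid and braid}: the framing factor $\Z^n$ must be tracked carefully through \eqref{eqn:gD} and \eqref{eqn:gE2}, since it is precisely the interaction between the framing twists and the genus generators that produces the exponent $2(g-1)$ and prevents the framed presentation from being a trivial $\Z^n$-extension of the unframed one of \cite{BeG}.
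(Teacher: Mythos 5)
Your overall skeleton matches the paper's: build the presented $\PaB^f$-module, verify the five relations geometrically to get a comparison morphism, use connectivity to reduce to automorphism groups of the standard object in each arity, and then match against the Bellingeri--Gervais presentation of $\on{B}^f_{g,n}$ from \cite{BeG}. Your dictionary between the module relations and \eqref{eqn:T1}--\eqref{eqn:FBG5} is the same one the paper uses in Lemma \ref{prop:gfra} (in particular the identification of \eqref{eqn:FBG2}/\eqref{eqn:FBG4} with \eqref{eqn:gD}/\eqref{eqn:gN}, of \eqref{eqn:FBG3} with \eqref{eqn:gE1}, and of \eqref{eqn:FBG5} with \eqref{eqn:gE2}).

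Where you diverge is precisely at what you correctly identify as the main obstacle: showing that the automorphism groups of the presented module carry \emph{no more} relations than $\on{B}^f_{g,n}$. You propose a MacLane--Joyal--Street style induction on arity, using \eqref{eqn:gD} to push every genus generator onto a single strand and reduce any relation to arity $\leq 3$. This is a legitimate strategy in principle, but as written it is an assertion rather than an argument: establishing the required normal form for words in the cabled generators and checking that the relations among normal forms are exactly those of $\on{B}^f_{g,n}$ is essentially equivalent to re-deriving the Bellingeri--Gervais presentation inside the module, i.e.\ it \emph{is} the content you are trying to prove. The paper dissolves this obstacle with a much cheaper retraction trick: after checking that the relations of $\on{B}^f_{g,n}$ hold in $\on{Aut}_{\tilde{Q}(n)}(p)$ (giving a surjection $\phi_n:\on{B}^f_{g,n}\to\on{Aut}_{\tilde{Q}(n)}(p)$), it introduces an auxiliary $\on{B}^f$-module $\bar{Q}$ with $\on{Aut}_{\bar{Q}(n)}(p):=\on{B}^f_{g,n}$ by definition, checks that the presented module maps to it, and observes that the composite $\on{B}^f_{g,n}\to\on{Aut}_{\tilde{Q}(n)}(p)\to\on{Aut}_{\bar{Q}(n)}(p)$ is the identity on generators, hence the identity; injectivity of $\phi_n$ is then formal and no coherence induction is needed. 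The remaining work is bookkeeping: passing between the symmetric pointed module and its non-symmetric shadow via the short exact sequences with quotient $\mathfrak S_n$, moving the basepoint from the corner stratum to $p_{reg}$, and invoking \cite[Theorem 13]{BeG} to identify $\on{B}^f_{g,n}$ with $\pi_1$ of the unordered framed configuration space. I would recommend replacing your proposed induction by this retraction argument; as it stands, that step is the one genuine gap in your write-up. (A minor slip: the unordered space $\on{Conf}^f(\Sigma_g,[n])$ is a $K(\on{B}^f_{g,n},1)$; it is the ordered one that is a $K(\on{PB}^f_{g,n},1)$, which is what your identification of $\on{Aut}_{\PaB^f_g(n)}(p_n)$ actually requires.)
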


\begin{remark}
Some direct consequences of the above theorem:
\begin{itemize}
\item By removing the third strand in \eqref{eqn:gD} 
and using $Z_a^{\emptyset,1}=\on{Id}^{1}$, 
one deduces that $A_a^{1, \emptyset}=A^1_a$ and $ B_a^{1, \emptyset}=B^1_a$, 
where $A^1_a,B^1_a$ are the elements introduced in Example \ref{Exbgn};
\item $\PaB^f_g$ identifies with the fake pull-back $\omega^{\star}  \CoB^f_g$ 
of the $\CoB^f$-module $$\CoB^f_g:=\mathcal{G}(\on{B}_g^f \to \mathfrak{S}),$$
  along the forgetful functor $\omega : \Pa
  \longrightarrow \mathfrak{S}$.
\item Relation \eqref{eqn:gN} can be understood as a naturality condition 
between couples of elements $(A_a,B_b)$ suggesting that $\PaB_g^f$ 
is a $(\PaB_1^f \otimes \ldots \otimes \PaB_1^f)$-module in the category of right 
$\PaB^f$-modules.
\end{itemize}
\end{remark}
Before proving this theorem let us state a fact that will be useful later on.

\begin{lemma}\label{LE1}
Relation \eqref{eqn:gD} is equivalent to 
\begin{equation}\label{eqn:Nbis}
Z_a^{12, 3}= \Phi^{1, 2, 3} Z_a^{1, 23} (\Phi^{1, 2, 3})^{- 1} R^{1, 2}
   \Phi^{2, 1, 3} Z_a^{2, 13}  (\Phi^{2, 1, 3})^{- 1} R^{2, 1}.
\end{equation}
\end{lemma}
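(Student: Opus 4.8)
The plan is to prove the equivalence by \emph{reversible algebraic rewriting} inside the group $\on{Aut}_{\PaB_g^f(3)}\big((12)3\big)$, using only the relations already available in $\PaB^f$ — the hexagons \eqref{eqn:H1}, \eqref{eqn:H2}, the pentagon \eqref{eqn:P}, and the cabling expressions for $R^{1,23},R^{2,31}$ — together with the geometric cabling identity for the global (diagonal) loop $Z_a^{(12)3}$, in which all three strands sweep the cycle $Z_a$ once. Throughout, $Z$ denotes either of the letters $A,B$, so a single computation covers both cases.

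First I would isolate the \emph{last block} $\Phi^{3,1,2}Z_a^{3,12}R^{3,12}$ of \eqref{eqn:gD}, which records the motion in which strand $3$ sweeps $Z_a$ while the cable $\{1,2\}$ rests. The global loop factors, by the cabling identity, as the motion ``cable $\{1,2\}$ sweeps, $3$ rests'' followed by this block:
\[ Z_a^{(12)3}=Z_a^{12,3}\cdot \Phi^{3,1,2}Z_a^{3,12}R^{3,12}. \]
This is the arity-$3$ incarnation of the arity-$2$ cabling identity $\tilde{Z}_a^{1,2}=(Z_a^{12})^{-1}Z_a^{1,2}R^{1,2}R^{2,1}$ recorded in the Remark following Example \ref{Exbgn2}, pushed into arity $3$ by the operadic insertion that cables the first input with the two strands $1,2$ while keeping $3$ as a spectator. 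Substituting this factorization into \eqref{eqn:gD} and cancelling the common trailing block $\Phi^{3,1,2}Z_a^{3,12}R^{3,12}$ reduces \eqref{eqn:gD} to the equivalent identity
\[ Z_a^{12,3}=\Phi^{1,2,3}Z_a^{1,23}R^{1,23}\Phi^{2,3,1}Z_a^{2,31}R^{2,31}. \]

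It then remains to match this reduced identity with \eqref{eqn:Nbis}. After cancelling the common left factor $\Phi^{1,2,3}Z_a^{1,23}$, the task becomes to prove
\[ R^{1,23}\Phi^{2,3,1}Z_a^{2,31}R^{2,31}=(\Phi^{1,2,3})^{-1}R^{1,2}\Phi^{2,1,3}Z_a^{2,13}(\Phi^{2,1,3})^{-1}R^{2,1}. \]
Here I would feed in the hexagon \eqref{eqn:H1} in the form $R^{1,23}\Phi^{2,3,1}=(\Phi^{1,2,3})^{-1}R^{1,2}\Phi^{2,1,3}R^{1,3}$ and cancel the resulting common prefix $(\Phi^{1,2,3})^{-1}R^{1,2}\Phi^{2,1,3}$, which leaves the single identity
\[ R^{1,3}Z_a^{2,31}R^{2,31}=Z_a^{2,13}(\Phi^{2,1,3})^{-1}R^{2,1}. \]
This expresses the passage between the two cablings $Z_a^{2,31}$ (strand $2$ sweeping past the rest cable $(31)$) and $Z_a^{2,13}$ (the rest cable reordered to $(13)$). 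I would establish it from the reordering conjugation $Z_a^{2,31}=(\text{braiding of }1,3)\,Z_a^{2,13}\,(\text{braiding of }1,3)^{-1}$ together with the inverse-braiding hexagon \eqref{eqn:H2} to expand $R^{2,31}$ in terms of $R^{2,1}$ and $R^{2,3}$, and the pentagon \eqref{eqn:P} to reconcile the associator words $\Phi^{2,3,1}$ and $\Phi^{2,1,3}$. Reading the three steps backwards gives the converse implication, so \eqref{eqn:gD} and \eqref{eqn:Nbis} are equivalent.

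The main obstacle is this last identity: the precise form of the cable-reordering conjugation relating $Z_a^{2,31}$ and $Z_a^{2,13}$, and making the associator and braiding factors cancel on the nose. Because paths are composed left to right and because the resting cable is \emph{encircled} rather than braided through, one must be careful to invoke the correct hexagon — \eqref{eqn:H2} for $\tilde R=(R^{2,1})^{-1}$ rather than \eqref{eqn:H1} — and to track the pentagon rearrangement of $\Phi^{2,3,1}$ against $\Phi^{2,1,3}$. This bookkeeping, rather than any conceptual difficulty, is where the care is needed; everything else is forced cancellation.
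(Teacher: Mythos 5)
Your overall strategy --- derive the arity-$2$ identity $Z_a^{1,2}R^{1,2}Z_a^{2,1}R^{2,1}=Z_a^{12}$ by erasing the third strand, cable it to trade $Z_a^{3,12}$ for $Z_a^{12,3}$, substitute into the decagon, and absorb the leftover braidings with the hexagons --- is essentially the route the paper takes. But the central identity you build on is wrong. Cabling the first input of $Z_a^{1,2}R^{1,2}Z_a^{2,1}R^{2,1}=Z_a^{12}$ gives
\[
Z_a^{(12)3}=Z_a^{12,3}\,R^{12,3}\,Z_a^{3,12}\,R^{3,12},
\]
with a genuine braiding $R^{12,3}$ between the two sweeps: after the cable $\{1,2\}$ has gone around the handle while $3$ rests, it returns to its starting position only up to a half-twist past strand $3$, which is exactly what the $R$'s in the arity-$2$ identity record. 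Your version $Z_a^{(12)3}=Z_a^{12,3}\cdot\Phi^{3,1,2}Z_a^{3,12}R^{3,12}$ puts an associator where this braiding must sit; it does not even typecheck (the right-hand block is a morphism $(31)2\to(12)3$, while $Z_a^{12,3}$ ends at $(12)3$), and it is not what one obtains by cabling the identity recorded in the Remark after Example \ref{Exbgn2}.

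This error propagates. Your ``reduced identity'' $Z_a^{12,3}=\Phi^{1,2,3}Z_a^{1,23}R^{1,23}\Phi^{2,3,1}Z_a^{2,31}R^{2,31}$ sends $(12)3$ to $(31)2$ rather than back to $(12)3$, and your final identity $R^{1,3}Z_a^{2,31}R^{2,31}=Z_a^{2,13}(\Phi^{2,1,3})^{-1}R^{2,1}$ likewise has mismatched targets, so neither can hold as stated. The missing trailing factor $\Phi^{3,1,2}(R^{12,3})^{-1}$ is not cosmetic: it is precisely the piece that the second hexagon (applied to the permutation $(123)\mapsto(312)$), the first hexagon expressing $R^{2,13}$, and the naturality relations $R^{1,3}Z_a^{2,31}=Z_a^{2,13}R^{1,3}$ and $R^{1,3}R^{2,31}=R^{2,13}R^{1,3}$ are needed to digest before one lands on \eqref{eqn:Nbis}. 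Once you correct the cabling identity and carry this factor through, the computation does close up and every step is reversible, but as written the proof has a gap at its very first reduction.
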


\begin{proof}[Proof of Lemma \ref{LE1}]
On the one hand, as $Z_a^{1,
\emptyset} = \tmop{Id}_1$, erasing the third strand both in relation \eqref{eqn:Pab:1:1} and in \eqref{eqn:Nbis},
implies $$Z_a^{1, 2} R^{1, 2} Z_a^{2, 1} R^{2, 1} =
Z_a^{12}.$$ Then, by doubling the first braid, this is equivalent to
\[ (Z_a^{3, 12})^{- 1} = R^{3, 12} Z_a^{12, 3} R^{12, 3} . \]
By using the above equation and the hexagon $R^{1, 23} \Phi^{2, 3, 1} = 
(\Phi^{1, 2, 3})^{- 1} R^{1, 2} \Phi^{2,
1, 3} R^{1, 3}$, then equation \eqref{eqn:Pab:1:1} reads
\begin{eqnarray*}
  Z_a^{(12) 3} & = & \Phi^{1, 2, 3} Z_a^{1, 23}  R^{1, 23} \Phi^{2, 3, 1}
  Z_a^{2, 31} R^{2, 31} \Phi^{3, 1, 2} Z_a^{3, 12}  R^{3, 12}\\
  & = & \Phi^{1, 2, 3} Z_a^{1, 23} (\Phi^{1, 2, 3})^{- 1}  R^{1, 2} \Phi^{2, 1,
  3} R^{1, 3} Z_a^{2, 31} R^{2, 31} \Phi^{3, 1, 2} Z_a^{3, 12}  R^{3, 12}\\
  & = & \Phi^{1, 2, 3} Z_a^{1, 23} (\Phi^{1, 2, 3})^{- 1}  R^{1, 2} \Phi^{2, 1,
  3} R^{1, 3} Z_a^{2, 31} R^{2, 31} \Phi^{3, 1, 2}  ( R^{12, 3})^{- 1} (Z_a^{12,
  3})^{- 1} Z_a^{(12) 3}.
\end{eqnarray*}
Next, as we have $R^{1, 3} Z_a^{2, 31} = Z_a^{2, 13} R^{1, 3} $ and $R^{1, 3}
R^{2, 31} = R^{2, 13} R^{1, 3}$, equation \eqref{eqn:Pab:1:1} is equivalent to
\[ Z_a^{12, 3} = \Phi^{1, 2, 3} Z_a^{1, 23} (\Phi^{1, 2, 3})^{- 1} R^{1, 2}
   \Phi^{2, 1, 3} Z_a^{2, 13} R^{2, 13} R^{1, 3} \Phi^{3, 1, 2}  (R^{12, 3})^{-
   1} . \]
Now, by applying the permutation $(123) \mapsto (312)$, the second hexagon
relation yields
\[ (R^{12, 3})^{- 1} = (\Phi^{3, 1, 2})^{- 1} (R^{1,3})^{- 1} \Phi^{1, 3, 2}
   (R^{2,3})^{- 1} (\Phi^{1, 2, 3})^{- 1} . \]
Thus, equation \eqref{eqn:Pab:1:1}  is equivalent to
\[ Z_a^{12, 3} = \Phi^{1, 2, 3} Z_a^{1, 23} (\Phi^{1, 2, 3})^{- 1} R^{1, 2}
   \Phi^{2, 1, 3} Z_a^{2, 13} R^{2, 13} \Phi^{1, 3, 2}  (R^{2,3})^{- 1}
   (\Phi^{1, 2, 3})^{- 1} . \]
By applying the permutation $(12) \mapsto (21)$, the first hexagon relation
yields
\[ R^{2, 13} = (\Phi^{2, 1, 3})^{- 1} R^{2, 1} \Phi^{1, 2, 3} R^{2, 3}
   (\Phi^{1, 3, 2})^{- 1} . \]
Thus, equation \eqref{eqn:Pab:1:1}  is equivalent to
\begin{equation}
Z_a^{12, 3}= \Phi^{1, 2, 3} Z_a^{1, 23} (\Phi^{1, 2, 3})^{- 1} R^{1, 2}
   \Phi^{2, 1, 3} Z_a^{2, 13}  (\Phi^{2, 1, 3})^{- 1} R^{2, 1}.
\end{equation}
\end{proof}

\begin{proof}[Proof of Theorem \ref{Thm:PaBfg}]

Let $\mathcal Q$ be the $\PaB^f$-module with the above presentation. 
We first show that there is a morphism of $\PaB^f$-modules $\mathcal Q\to\PaB_g^f$. 
We have already seen that there are $2g$ automorphisms $A_a^{1,2},B_a^{1,2}$ 
of $(12)$ in $\mathbf{PaB}_g^f(2)$ (see Example \ref{Exbgn2}). We have to 
prove that they indeed satisfy the relations \eqref{eqn:gR}, \eqref{eqn:gD}, 
\eqref{eqn:gN}, \eqref{eqn:gE1}, and \eqref{eqn:gE2}. 

\medskip

\noindent\underline{Relation \eqref{eqn:gR} is satisfied:} 
This is straightforwardly satisfied as it corresponds topologically to 
removing the first brand to the paths $A_a^{1,2},B_a^{1,2}$ that move the first strand, leaving the 
second strand untouched.

\medskip

\noindent\underline{Relation \eqref{eqn:gD} is satisfied:} 
The \textit{decagon relation} \eqref{eqn:gD} can be depicted as follows 
(for simplicity we abusively neglect picturing the framing data): 
\begin{center}
\begin{align}\tag{$D_g$}
\begin{tikzpicture}[baseline=(current bounding box.center)]
\tikzstyle point=[circle, fill=black, inner sep=0.05cm] 
\node[point, label=above:$(1$] at (1,1) {};
 \node[point, label=below:$(1$] at (1,0) {};
 \node[point, label=above:$2)$] at (1.5,1) {};
 \node[point, label=below:$2)$] at (1.5,0) {};
 \node[point, label=above:$3$] at (3,1) {};
 \node[point, label=below:$3$] at (3,0) {};
 \draw[-,thick] (1,1) .. controls (1,0.5) and (1,0.5).. (0.5,0.5); 
 \draw[->,thick] (0.5,0.5) .. controls (1,0.5) and (1,0.5).. (1,0.05); 
 \draw[-,thick] (1.5,1) .. controls (1.5,0.5) and (1.5,0.5).. (1,0.5); 
 \draw[->,thick] (1,0.5) .. controls (1.5,0.5) and (1.5,0.5).. (1.5,0.05); 
\node[point, white, label=left:$Z_a^{(12)3}$] at (0.5,0.5) {};
 \draw[-,thick] (3,1) .. controls (3,0.5) and (3,0.5).. (2.5,0.5); 
 \draw[->,thick] (2.5,0.5) .. controls (3,0.5) and (3,0.5).. (3,0.05); 
\end{tikzpicture}
\qquad = \qquad
\begin{tikzpicture}[baseline=(current bounding box.center)] 
\tikzstyle point=[circle, fill=black, inner sep=0.05cm]
 \node[point, label=above:$(1$] at (1,4) {};
 \node[point, label=below:$(1$] at (1,-2) {};
 \node[point, label=above:$2)$] at (1.5,4) {};
 \node[point, label=below:$2)$] at (1.5,-2) {};
 \node[point, label=above:$3$] at (3,4) {};
 \node[point, label=below:$3$] at (3,-2) {};
 \draw[-,thick] (1,4) .. controls (1,3.75) and (1,3.75).. (1,3.5);
 \draw[-,thick] (1.5,4) .. controls (1.5,3.75) and (2.5,3.75).. (2.5,3.5);
 \draw[-,thick] (1,3.5) .. controls (1,3.25) and (1,3.25).. (0.5,3.25); 
\node[point, white, label=left:$Z_a^{1,23}$] at (0.5,3.25) {};
 \draw[-,thick] (0.5,3.25) .. controls (1,3.25) and (1,3.25).. (1,3); 
 \draw[-,thick] (3,4) .. controls (3,3.75) and (3,3.75).. (3,3);
 \draw[-,thick] (3,3) .. controls (3,3.75) and (3,3.75).. (3,3);
 \draw[-,thick] (2.5,3.5) .. controls (2.5,3.25) and (2.5,3.25).. (2.5,3); 
 \draw[-,thick] (1,3) .. controls (1.1,2.5) and (2.9,2.5).. (3,2);
\node[point, ,white] at (2.15,2.45) {};
\node[point, ,white] at (1.85,2.55) {};
\draw[-,thick] (2.5,3) .. controls (2.5,2.5) and (1,2.5).. (1,2);
\draw[-,thick] (3,3) .. controls (3,2.5) and (1.5,2.5).. (1.5,2);
\draw[-,thick] (1,2) .. controls (1,1.75) and (1,1.75).. (1,1.5);
\draw[-,thick] (1.5,2) .. controls (1.5,1.75) and (2.5,1.75).. (2.5,1.5);
 \draw[-,thick] (1,1.5) .. controls (1,1.25) and (1,1.25).. (0.5,1.25); 
\node[point, white, label=left:$Z_a^{2,31}$] at (0.5,1.25) {};
 \draw[-,thick] (0.5,1.25) .. controls (1,1.25) and (1,1.25).. (1,1); 
 \draw[-,thick] (3,2) .. controls (3,1.75) and (3,1.75).. (3,1);
 \draw[-,thick] (3,1) .. controls (3,1.75) and (3,1.75).. (3,1);
 \draw[-,thick] (2.5,1.5) .. controls (2.5,1.25) and (2.5,1.25).. (2.5,1); 
 \draw[-,thick] (1,1) .. controls (1.1,0.5) and (2.9,0.5).. (3,0);
\node[point, ,white] at (2.15,0.45) {};
\node[point, ,white] at (1.85,0.55) {};
\draw[-,thick] (2.5,1) .. controls (2.5,0.5) and (1,0.5).. (1,0);
\draw[-,thick] (3,1) .. controls (3,0.5) and (1.5,0.5).. (1.5,0);
\draw[-,thick] (1,0) .. controls (1,-0.25) and (1,-0.25).. (1,-0.5);
\draw[-,thick] (1.5,0) .. controls (1.5,-0.25) and (2.5,-0.25).. (2.5,-0.5);
 \draw[-,thick] (1,-0.5) .. controls (1,-0.75) and (1,-0.75).. (0.5,-0.75); 
\node[point, white, label=left:$Z_a^{3,12}$] at (0.5,-0.75) {};
 \draw[-,thick] (0.5,-0.75) .. controls (1,-0.75) and (1,-0.75).. (1,-1); 
 \draw[-,thick] (3,0) .. controls (3,-0.25) and (3,-0.25).. (3,-1);
 \draw[-,thick] (3,-1) .. controls (3,-0.25) and (3,-0.25).. (3,-1);
 \draw[-,thick] (2.5,-0.5) .. controls (2.5,-0.75) and (2.5,-0.75).. (2.5,-1); 
 \draw[->,thick] (1,-1) .. controls (1.1,-1.5) and (2.9,-1.5).. (3,-1.95);
\node[point, ,white] at (2.15,-1.55) {};
\node[point, ,white] at (1.85,-1.45) {};
\draw[->,thick] (2.5,-1) .. controls (2.5,-1.5) and (1,-1.5).. (1,-1.95);
\draw[->,thick] (3,-1) .. controls (3,-1.5) and (1.5,-1.5).. (1.5,-1.95);
\end{tikzpicture} \end{align}
\end{center}
It is satisfied in $\PaB_{g}^f$, expressing the fact that when all (here, three) 
points with their associated framing data move along a generating loop on $\Sigma_g$ (in the opposite direction), 
this corresponds to the path in the framed configuration space of points on 
$\Sigma_g$ moving and twisting simultaneously the three points. 
Thus, the number of twists in the l.h.s. and r.h.s. of \eqref{eqn:gD} are equal and cancel out. 
\medskip

\noindent\underline{Relation \eqref{eqn:gN}  is satisfied:} 
This is straightforwardly satisfied as the braids corresponding to the l.h.s. 
and r.h.s. of the comutator are independent.

\medskip

\noindent\underline{Relation \eqref{eqn:gE1} is satisfied:} 
One can interpret the path in the r.h.s. of \eqref{eqn:gE1} 
as follows. Consider the path
$$K=\left(R^{1,2}\Phi^{2,1,3}A_a^{2,13}(\Phi^{2,1,3})^{-1} R^{2,1},
\Phi^{1,2,3}B_a^{1,23}(\Phi^{1,2,3})^{-1} \right).$$
This can be depicted as follows:
\begin{center}
\includegraphics[scale=1]{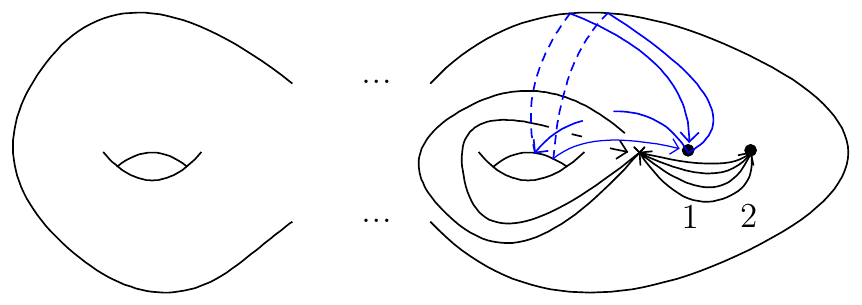}
\end{center}
By noticing that the the braid numbered by 1 passes above the one numbered by 2
 first, one can see that $K$ is homotopic to the braid $R^{1,2}R^{2,1}$.

\noindent\underline{Relation \eqref{eqn:gE2} is satisfied:} 
Relation \eqref{eqn:gE2} is more difficult to draw so we sketch the way to think 
of the right-hand-side. Align the points in a generating cycle of the genus $g$ 
surface (this means that they are in the boundary of the compactified framed 
configuration space). Then if a point travels through a cycle, its corresponding 
framing will naturally start to spin (in clockwise direction) as one can see in the 
following picture, for $g=2$ and for $g=4$
\begin{center}
\includegraphics[scale=1]{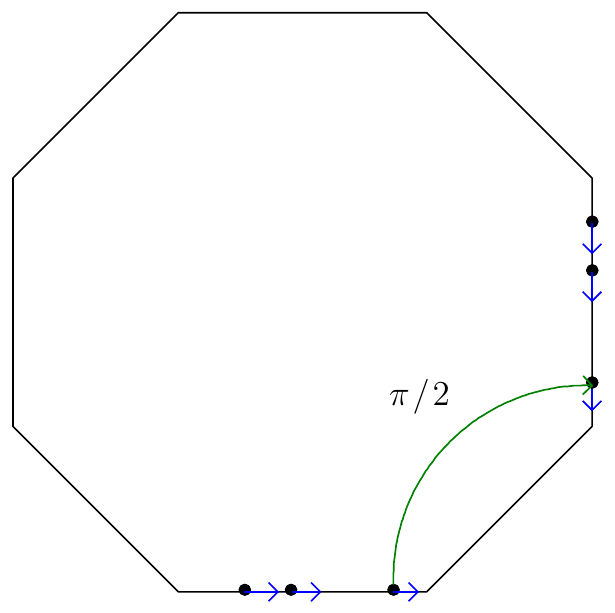} $\quad$ \includegraphics[scale=1]{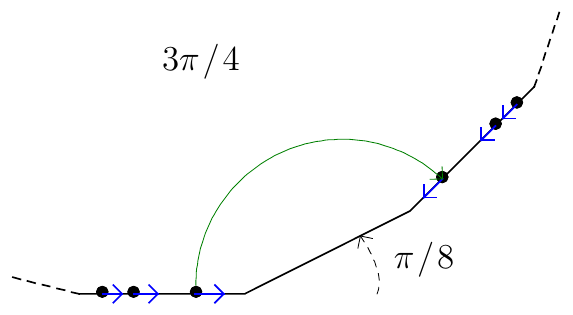}
\end{center}
If we consider a polygon with $4g$ sides corresponding to a genus $g$ surface, 
then for each marked point travelling through the generating cycles, the framing 
attached to that point will be twisted by an angle of $\pi - \frac{\pi}{g}$.

If we suppose that the marked points were chosen to be in the $A_1$-cycle of 
$\Sigma_g$, the right hand side of \eqref{eqn:gE2} can be drawn as follows, for $g=2$:
\begin{center}
\includegraphics[scale=1]{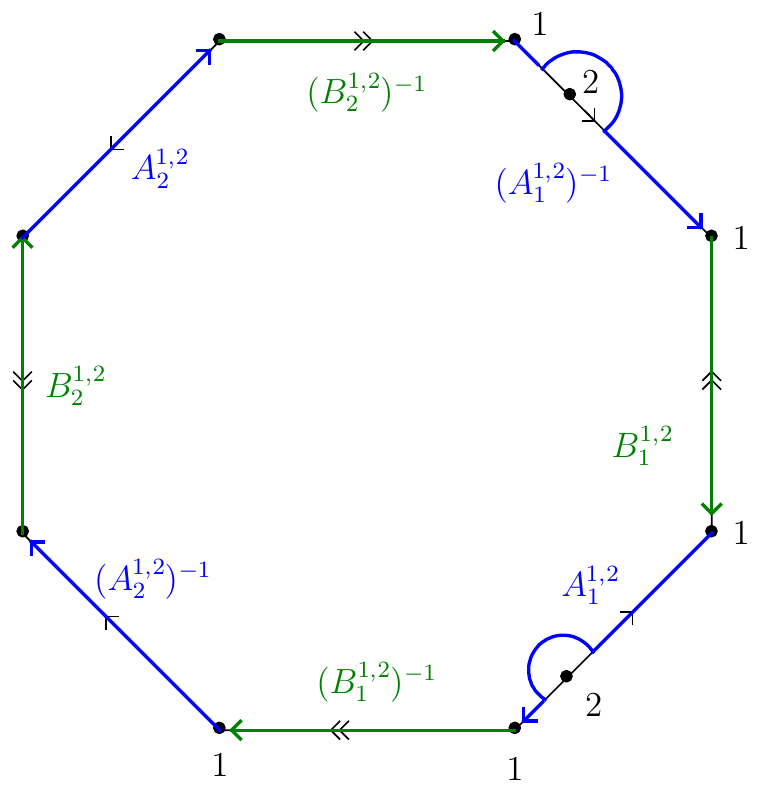}
\end{center}
In conclusion, one can then easily see that if, for a 2-point configuration, the first point travels
around all the generating cycles concerned in the right-hand-side of relation 
\eqref{eqn:gE2}, its corresponding framing data will make $2g \times \frac{(g-1)}{g} = 2(g-1)$ 
complete spins and the first point will have done a complete loop around 
the second point. This is exactly the left-hand-side of equation \eqref{eqn:gE2}.

\medskip

Thus, by the universal property of $\mathcal Q$, there is a morphism of 
$\PaB^f$-modules $\mathcal Q \to \mathbf{PaB}_g^f$, which is the identity 
on objects.
To show that this map is in fact an isomorphism, it suffices to show that it is 
an isomorphism at the level of automorphism groups of objects arity-wise, 
as all groupoids are connected. 
Let $n\geq 0$, and $p$ be the object $1\Big(\cdots(\big((n-2)\big((n-1) n\big)\big)\cdots\cdots\Big)$ of 
$\mathcal Q(n)$ and $\mathbf{PaB}_g^f(n)$. We want to show that the 
induced morphism 
$$
\on{Aut}_{\mathcal Q(n)}(p)  \longrightarrow \on{Aut}_{\mathbf{PaB}_g^f(n)}(p)
=\pi_1\left(\overline{\textrm{Conf}}^f(\Sigma_g,n),p\right)
$$
is an isomorphism. 

On the one hand, as $\overline{\textrm{Conf}}^f(\Sigma_g,n)$ is a manifold 
with corners, we are allowed to move the basepoint $p$ to the point $p_{reg}$ 
in which is based the fundamental group in subsection \ref{PBgnf}. 
We then have an isomorphism of fundamental groups 
$\pi_1(\overline{\textrm{Conf}}^f(\Sigma_g,n),p) \simeq 
\pi_1(\textrm{Conf}^f(\Sigma_g,n),p_{reg})$. 

\medskip

On the other hand, one can construct a non-symetric module $\tilde{Q}$ 
in groupoids over $\B^f$ carrying  an action of the (algebraic version of the) 
framed braid group $\on{B}^f_{g,n}$ on $\Sigma_g$ in the following sense: 
\begin{itemize}
\item for each $n\geq1$, $\tilde{Q}(n)$ is a groupoid with maximal 
parenthesizations of unnumbered elements as objects. We will make abuse of notation on still numbering these elements in order to count them.
\item $\tilde{Q}$ is freely generated by $A_a^{\bullet, \bullet}:=A_a^{1, 2}$ 
and $B_a^{\bullet, \bullet}:=B_a^{1, 2}$ in $\tilde{Q}(2)$, for all $1
    \leqslant i \leqslant g,$ satisfying relations \eqref{eqn:gR}, 
    \eqref{eqn:gD}, \eqref{eqn:gN}, \eqref{eqn:gE1} 
    and \eqref{eqn:gE2}.
\end{itemize}

In the following lemma we show that there are group morphisms 
$\on{B}_{g,n}^f \tilde{\to} \on{Aut}_{\tilde{Q}(n)}(p)\to\mathfrak S_n$, 
the left one being unique. 

\begin{lemma} \label{prop:gfra}
Let $\tilde{Q}$ be the operadic $\on{B}^f$-module with unnumbered maximal 
paranthesizations as objects and with generators $A_i^{1, 2}:=A_i^{\bullet, \bullet}$ 
and $B_i^{1, 2}:=B_i^{\bullet, \bullet}$, for all $1
    \leqslant i \leqslant g$, in $\tilde{Q}(2)$ satisfying relations 
    \eqref{eqn:gE1} and \eqref{eqn:gE2}.
Let $p$ be the object in $\tilde{Q}(n)$ given by the $n$-lenght rightmost 
maximal parenthesization $$p:=(\bullet(\bullet(\bullet(\ldots((\bullet\bullet))\ldots).$$
Then there is a unique group isomorphism 
$$\phi_n : \on{B}^f_{g,n}\to \on{Aut}_{\tilde{Q}(n)}(p),$$ 
such that, for $\Phi_i:= \Phi^{1...i-1,i,i+1...n}...\Phi^{1...n-2,n-1,n}$
\begin{itemize}
\item $X_1^a\mapsto A_a^{1,2\ldots n}$, for all $1
    \leqslant a \leqslant g$ ;
\item $Y_1^a\mapsto B_a^{1,2\ldots n}$, for all $1
    \leqslant a \leqslant g$ ;
\item $\tau_i\mapsto (\Phi_i)^{-1}R^{i,i+1}\Phi_i$ ; for all $1
    \leqslant i \leqslant n-1$ ;
\item $f_i\mapsto (\Phi_i)^{-1}F^{i,i+1}\Phi_i$, for all $1
    \leqslant i \leqslant n-1$ ;
    \item $f_n\mapsto \tilde F^{n-1,n}$,
\end{itemize}
where $A^{1,2\ldots n}\in\on{Aut}_{\tilde{Q}(n)}(p)$ is obtained from $A^{1,2}$,  
$F^{i,i+1}$ and $F^n$ are obtained from $F^{1,2}$ and 
$R^{i,i+1}\in\on{Aut}_{\tilde{Q}(n)}(p)$ is obtained from $R^{1,2}$ by some finite 
sequences of arrows involving the associator and the operadic module 
morphisms since the parenthesizations are unmarked.
\end{lemma}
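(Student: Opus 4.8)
The plan is to define $\phi_n$ on the generators $X_1^a, Y_1^a, \tau_i, f_i$ of $\on{B}^f_{g,n}$ by the prescribed formulas and then to prove, in turn, that it is a well-defined group homomorphism, that it is surjective, and that it admits a two-sided inverse. Uniqueness is then automatic: the listed elements generate $\on{B}^f_{g,n}$, so a homomorphism taking the stated values is completely determined. I would use throughout two facts established earlier, namely the free generation of $\tilde{Q}$ as a $\on{B}^f$-module by $A_a^{1,2}, B_a^{1,2}$ subject to \eqref{eqn:gR}, \eqref{eqn:gD}, \eqref{eqn:gN}, \eqref{eqn:gE1} and \eqref{eqn:gE2}, and the isomorphism $\on{B}^f_n \xrightarrow{\sim} \on{Aut}_{\on{B}^f(n)}(p)$ of subsection \ref{PBf}, under which $\tau_i$ and $f_i$ correspond to $\Phi_i^{-1}R^{i,i+1}\Phi_i$ and $\Phi_i^{-1}F^{i,i+1}\Phi_i$ (with $f_n$ sent to $\tilde F^{n-1,n}$).

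For well-definedness I would match the defining relations of $\on{B}^f_{g,n}$ with structure already present in $\tilde{Q}$. The relations \eqref{eqn:T1}, \eqref{eqn:T2}, \eqref{eqn:FT1}, \eqref{eqn:FT2} and \eqref{eqn:FB3} involve only the $\tau_i$ and $f_i$; on the subgroup they generate $\phi_n$ factors through $\on{B}^f_n \xrightarrow{\sim} \on{Aut}_{\on{B}^f(n)}(p)$ followed by the module map $\on{B}^f \to \tilde{Q}$, so they hold automatically, being ultimately consequences of the hexagons \eqref{eqn:H1}, \eqref{eqn:H2}, the pentagon \eqref{eqn:P} and the framing relation \eqref{eqn:F}. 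The commutation relations \eqref{eqn:FBG1}, \eqref{eqn:FBG2} and \eqref{eqn:FBG4} express the independence of surface loops carried by disjoint blocks of strands and by distinct handles, and follow from the naturality relation \eqref{eqn:gN} together with the independence of operations on disjoint blocks built into the $\on{B}^f$-module structure; relation \eqref{eqn:FBG3} is the arity-$n$ image of \eqref{eqn:gE1}, obtained by writing $X_2^a=\tau_1 X_1^a\tau_1$. Finally, the global relation \eqref{eqn:FBG5} is the image of \eqref{eqn:gE2}: I would transport the arity-$2$ identity to the object $p$ of $\tilde{Q}(n)$ by inserting identities and doubling, the decagon \eqref{eqn:gD} in the simplified form of Lemma \ref{LE1} being exactly what governs the behaviour of $A_a, B_a$ under these operations.

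Surjectivity follows from the same freeness: since $\tilde{Q}$ is generated over $\on{B}^f$ by the $A_a, B_a$, any automorphism of $p$ is a word in the transported $A_a, B_a$ and in $\on{Aut}_{\on{B}^f(n)}(p)$, the latter being generated by the $\tau_i, f_i$, while \eqref{eqn:gD} rewrites a surface loop attached to any block as a braid conjugate of the leftmost generators $A_a^{1,2\ldots n}, B_a^{1,2\ldots n}$. To produce the inverse I would invoke the universal property of $\tilde{Q}$: in the coloured framed surface-braid module $\mathcal{G}(\on{B}^f_g \to \mathfrak{S})$ one has geometric framed loops playing the role of $A_a, B_a$, and these satisfy \eqref{eqn:gR}, \eqref{eqn:gD}, \eqref{eqn:gN}, \eqref{eqn:gE1} and \eqref{eqn:gE2} because the corresponding relations are valid in the framed surface braid groups, as follows from the presentation of \cite{BeG}. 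The universal property then yields a $\on{B}^f$-module morphism $\tilde{Q} \to \mathcal{G}(\on{B}^f_g \to \mathfrak{S})$ whose effect on $\on{Aut}(p)$ is a homomorphism $\on{Aut}_{\tilde{Q}(n)}(p) \to \on{B}^f_{g,n}$; comparing values on generators shows it is a two-sided inverse of $\phi_n$, so $\phi_n$ is an isomorphism.

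The hard part will be the verification underlying these two opposite maps, and in particular the passage between the arity-$2$ relation \eqref{eqn:gE2} and the global relation \eqref{eqn:FBG5}. One must check that transporting $\prod_{a=1}^g ((A_a^{1,2})^{-1}, B_a^{1,2})$ to arity $n$ through the module structure reproduces on the nose the braid word $\tau_1\cdots\tau_{n-2}\tau_{n-1}^2\tau_{n-2}\cdots\tau_1$ on the braiding side, and that the framing contribution is exactly $f_1^{2(g-1)}$, matched by the power $(F^{1,2})^{2(g-1)}$ appearing in \eqref{eqn:gE2}. Equally delicate is keeping track of the central framings: in the non-split extension $1 \to \Z^n \to \on{B}^f_{g,n} \to \on{B}_{g,n} \to 1$ the subgroup generated by the $f_i$ must be identified with this central $\Z^n$ while the comparison of generators and relations is carried out, so that no framing power is lost and no spurious relation among the $A_a, B_a$ survives. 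It is this simultaneous control of the surface relation and of the central framing data that makes the two maps mutually inverse.
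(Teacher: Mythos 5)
Your overall architecture coincides with the paper's: define $\phi_n$ on generators, check the defining relations of $\on{B}^f_{g,n}$ one by one against the relations of $\tilde{Q}$, get surjectivity from the fact that every relation of $\tilde{Q}(n)$ comes from relations of $\on{B}^f_{g,n}$ and the module compositions, and get injectivity by mapping $\tilde{Q}$ to a module whose automorphism groups are the surface framed braid groups themselves (your $\mathcal{G}(\on{B}^f_g\to\mathfrak{S})$ is the paper's $\bar{Q}$) and checking that the composite $\on{B}^f_{g,n}\to\on{Aut}_{\tilde{Q}(n)}(p)\to\on{B}^f_{g,n}$ is the identity. Uniqueness by generation is also as in the paper.

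There is, however, one genuine gap in your well-definedness step: you claim that \eqref{eqn:FBG2}, i.e.\ $(X_1^a,X_2^a)=(Y_1^a,Y_2^a)=1$ for a \emph{single} handle index $a$, follows from the naturality relation \eqref{eqn:gN} together with disjointness of blocks. It cannot: \eqref{eqn:gN} is only imposed for distinct handles $1\leq a<b\leq g$, and disjointness of the strands does not force two loops around the \emph{same} handle to commute. In the paper this relation is extracted from the decagon \eqref{eqn:gD}. One first rewrites \eqref{eqn:gD} in the form \eqref{eqn:Nbis} of Lemma \ref{LE1}, namely $Z_a^{12,3}=\Phi^{1,2,3}Z_a^{1,23}(\Phi^{1,2,3})^{-1}R^{1,2}\Phi^{2,1,3}Z_a^{2,13}(\Phi^{2,1,3})^{-1}R^{2,1}$, then uses the naturality of the module structure, $Z_a^{12,3}R^{1,2}=R^{1,2}Z_a^{21,3}$, and substitutes the right-hand side of \eqref{eqn:Nbis} for both $Z_a^{12,3}$ and $Z_a^{21,3}$; cancellation yields $\big(Z_a^{1,23},\,R^{1,2}Z_a^{2,13}R^{2,1}\big)=1$, and removing the third strand gives exactly \eqref{eqn:FBG2}. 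Without routing this relation through the decagon your homomorphism is not known to be well defined, so you should replace the appeal to \eqref{eqn:gN} by this argument. The rest of your outline, including the delicate bookkeeping of the central framings in \eqref{eqn:FBG5} versus \eqref{eqn:gE2}, matches what the paper actually does.
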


In particular, by applying a finite sequence of associators one can show 
that the above lemma remains true for all possible choices of base points $p\in\tilde{Q}(n)$.

\begin{proof}
For simplicity, we omit the associativity 
constraints. One can show by induction that the image of $X^a_{i}:=\tau_{i-1}X^a_{i-1}\tau_{i-1}$ is 
$$
 R^{12...(i-1),i...n}A_a^{1,2...n} R^{i...n,12...(i-1)}
$$
therefore the image of $X^a_{1}
\cdots X^a_{i}$ is $A_a^{1...i,(i+1)...n}$. We will thus reduce to the cases $n=2,3$ in the rest of the proof.
\medskip

\noindent\underline{$\phi_n $ is a well-defined group morphism:} 
Let us first show that there is indeed such a group morphism. 
First of all, the braid relations are preserved as there are morphisms from 
$\on{B}_3$ to both groups (the first one is classic, the second one is 
induced by the fact that $\tilde{Q}$ is a $\B^f$-module).
Relation \eqref{eqn:FBG1} is preserved by naturality since $\tilde{Q}$ is a $\on{B}^f$-module.
Next, notice that, by removing the third braid in relation \eqref{eqn:gD} for $Z=A$, we obtain relation
$$
A_a^{1,2} R^{1,2}A_a^{2,1} R^{2,1}=A_a^{12}
$$
which can be depicted as follows:
\begin{center}
\begin{align}\label{D1bis}\tag{D1bis}
\begin{tikzpicture}[baseline=(current bounding box.center)]
\tikzstyle point=[circle, fill=black, inner sep=0.05cm] 
\node[point, label=above:$1$] at (1,1) {};
 \node[point, label=below:$1$] at (1,0) {};
 \node[point, label=above:$2$] at (1.5,1) {};
 \node[point, label=below:$2$] at (1.5,0) {};
 \draw[-,thick] (1,1) .. controls (1,0.5) and (1,0.5).. (0.5,0.5); 
 \draw[->,thick] (0.5,0.5) .. controls (1,0.5) and (1,0.5).. (1,0.05); 
 \draw[-,thick] (1.5,1) .. controls (1.5,0.5) and (1.5,0.5).. (1,0.5); 
 \draw[->,thick] (1,0.5) .. controls (1.5,0.5) and (1.5,0.5).. (1.5,0.05); 
\node[point, white, label=left:$A_a^{12}$] at (0.5,0.5) {};
\end{tikzpicture} 
\qquad = \qquad
\begin{tikzpicture}[baseline=(current bounding box.center)] 
\tikzstyle point=[circle, fill=black, inner sep=0.05cm]
 \node[point, label=above:$1$] at (1,4) {};
 \node[point, label=below:$1$] at (1,0) {};
 \node[point, label=above:$2$] at (1.5,4) {};
 \node[point, label=below:$2$] at (1.5,0) {};
 \draw[-,thick] (1,4) .. controls (1,3.75) and (1,3.75).. (1,3.5);
 \draw[-,thick] (1.5,4) .. controls (1.5,3.75) and (1.5,3.75).. (1.5,3.5);
 \draw[-,thick] (1,3.5) .. controls (1,3.25) and (1,3.25).. (0.5,3.25); 
 \draw[-,thick] (0.5,3.25) .. controls (1,3.25) and (1,3.25).. (1,3); 
\node[point, white, label=left:$A_a^{1,2}$] at (0.5,3.25) {};
 \draw[-,thick] (1.5,3.5) .. controls (1.5,3.25) and (1.5,3.25).. (1.5,3);
 \draw[-,thick] (1,3) .. controls (1,2.75) and (1,2.75).. (1,2.5);
 \draw[-,thick] (1.5,3) .. controls (1.5,2.75) and (1.5,2.75).. (1.5,2.5);
 \draw[-,thick] (1,2.5) .. controls (1,2.25) and (1.5,2.25).. (1.5,2);
 \node[point, ,white] at (1.25,2.25) {};
\draw[-,thick] (1.5,2.5) .. controls (1.5,2.25) and (1,2.25).. (1,2); 
 \draw[-,thick] (1,2) .. controls (1,1.75) and (1,1.75).. (1,1.5);
 \draw[-,thick] (1.5,2) .. controls (1.5,1.75) and (1.5,1.75).. (1.5,1.5);
 \draw[-,thick] (1,1.5) .. controls (1,1.25) and (1,1.25).. (0.5,1.25); 
 \draw[-,thick] (0.5,1.25) .. controls (1,1.25) and (1,1.25).. (1,1); 
\node[point, white, label=left:$A_a^{2,1}$] at (0.5,1.25) {};
 \draw[-,thick] (1.5,1.5) .. controls (1.5,1.25) and (1.5,1.25).. (1.5,1);
 \draw[-,thick] (1,1) .. controls (1,0.85) and (1,0.85).. (1,0.5);
 \draw[-,thick] (1.5,1) .. controls (1.5,0.85) and (1.5,0.85).. (1.5,0.5);
 \draw[->,thick] (1,0.5) .. controls (1,0.25) and (1.5,0.25).. (1.5,0.05); 
 \node[point, ,white] at (1.25,0.25) {};
 \draw[->,thick] (1.5,0.5) .. controls (1.5,0.25) and (1,0.25).. (1,0.05); 
\end{tikzpicture}
 \end{align}
\end{center}

Now, in Lemma \ref{LE1}, we prove that relation \eqref{eqn:gD} for $Z=A$ is equivalent to 
\begin{equation}\label{eqn:Nbis2}
A_a^{12, 3}= \Phi^{1, 2, 3} A_a^{1, 23} (\Phi^{1, 2, 3})^{- 1} R^{1, 2}
   \Phi^{2, 1, 3} A_a^{2, 13}  (\Phi^{2, 1, 3})^{- 1} R^{2, 1}.
\end{equation}

Again, by naturality since $\tilde{Q}$ is a $\on{B}^f$-module, we have
 $A_a^{12,3} R^{1,2}=R^{1,2}A_a^{21,3}$ so that replacing $A_a^{12,3}$ and 
 $A_a^{21,3}$ by the r.h.s. of equation \eqref{eqn:Nbis2}, this implies that
$$
(A_a^{1,23},R^{1,2}A_a^{2,13} R^{2,1} )=1.
$$
Removing the third strand in this equation implies that relation $(X^a_1,X^a_2)=1$ 
is preserved by $\phi_n$. The same reasoning applies to $\eqref{eqn:gD}$ for $Z=B$ and 
implies that $(Y^a_1,Y^a_2)=1$ is also preserved by $\phi_n$. In the same way, 
we obtain relation \eqref{eqn:FBG4} directly from \eqref{eqn:gN} since our operadic modules are 
pointed so we can remove
 the third strand.
Next, relation \eqref{eqn:FBG3} is satisfied as we can retrieve the 
third strand in \eqref{eqn:gE1} to obtain the desired relation.
Finally, relation \eqref{eqn:FBG5} is obtained directly from \eqref{eqn:gE2}.
Thus, we have a group morphism. Let us show that it is bijective.
\medskip

\noindent\underline{$\phi_n $ is surjective:}
The fact that the map $\phi_n $ is surjective is a consequence of the fact that all 
the defining relations in $\tilde{Q}(n)$ come from the defining relations of 
$\on{B}^f_{g,n}$ and the operadic module partial compositions.
\medskip

\noindent\underline{$\phi_n $ is injective:}
Let us now show the injectivity of this map. Let $\bar{Q}$ be the operad module 
with same objects as $\tilde{Q}$ and, for every object $p$ of $\bar{Q}(n)$, we 
define $\on{Aut}_{\bar{Q}(n)}(p):=B^f_{g,n}$. Next we have a map 
$\tilde{Q}\to \bar{Q}$ sending the generators $A_{a}^{1,2}$ to $X_{1}^{a}$ and 
$B_{a}^{1,2}$ to $Y_{1}^{a}$ in $B_{g,2}^f$. Indeed, the fact that both modules 
are pointed and since the following relations hold in $B_{g,2}^f$ and $B_{g,3}^f$
\begin{flalign*}
&(X_{1}^{a}\tau_{1}\tau_{2})^{3}=X_{123}^{a}, \\
&  (Y_{1}^{a}\tau_{1}\tau_{2})^{3}=Y_{123}^{a},\\ 
&  (\tau_{1}X_{1}^{a}\tau_{1},Y_{1}^{a}) = \tau_{1}^{2}, \\
& (X_1^a,X_2^b)=(X_1^a,Y_2^b)=(Y_1^a,X_2^b)=(Y_1^a,Y_2^b)=1, \\
& \prod_{a=1}^g ((X_{1}^{a})^{-1},Y_{1}^{a} )=\tau_1^2f_1^{2(g-1)}, &
\end{flalign*}
we show that relations \eqref{eqn:gR}, \eqref{eqn:gD}, 
\eqref{eqn:gN},  \eqref{eqn:gE1} and \eqref{eqn:gE2} are preserved.

Then, as $\PaB^f$ acts on both of these operadic modules we conclude 
that there is a map $\on{Aut}_{\tilde{Q}(n)}(p) \to \on{Aut}_{\bar{Q}(n)}(p) $. 
In order to prove the injectivity of $\phi$, we are left to prove that the composite 
$$
\on{B}^f_{g,n}\to \on{Aut}_{\tilde{Q}(n)}(p) \to \on{Aut}_{\bar{Q}(n)}(p) 
$$
is the identity morphism, which is true by construction of both maps.
\end{proof}

\textit{End of the proof of Theorem \ref{Thm:PaBfg}.} 

In the same way the collection $\{\on{PB}^f_{g,n}\}_{n\geq 1}$ of pure 
genus $g$ braids owns a non-symmetric module over the non-symmetric 
operad $\mathbf{PB}^f$, constructed in Section \ref{PBf}, denoted
 $\mathbf{PB}^f_g$.

Moreover, one the forgetful map $\mathbf{Op}\mathcal{C}\to\mathbf{NsOp}\mathcal{C}$ between 
the category of operads in $\mathcal{C}$ and the category of non-symmetric operads in $\mathcal{C}$ 
 induces a map $ Q\to\tilde{Q} $. Then, one has by constuction of $\tilde{Q}$ 
 that $\on{Aut}_{\mathcal Q(n)}(p)$ is the kernel of the map 
 $\on{Aut}_{\tilde{Q}(n)}([p]) \to \mathfrak{S}_n$. 
One can actually show that we have a commuting diagram 
$$
\xymatrix{
 \on{PB}^f_{g,n} \ar[r]^-{\simeq} \ar[d] &  \on{Aut}_{Q(n)}(p) \ar[r] \ar[d] & 
\pi_1\left(\overline{\textrm{Conf}}^f(\Sigma_g,n),p\right) \ar[d] & \ar[l]_-{\simeq}
 \pi_1\left(\textrm{Conf}^f(\Sigma_g,n),p_{reg}\right)\ar[d] \\ 
 \on{B}^f_{g,n} \ar[r]^-{\simeq} \ar[d] &  \on{Aut}_{\tilde{Q}(n)}([p]) \ar[r] \ar[d] & 
\pi_1\left(\overline{\textrm{Conf}}^f(\Sigma_g,n)/\mathfrak S_n,[p] \right)  \ar[d] & \ar[l]_-{\simeq}
\pi_1\left(\textrm{Conf}^f(\Sigma_g,n)/\mathfrak S_n,[p_{reg}]\right) \ar[d] \\
\mathfrak{S}_n \ar@{=}[r]& \mathfrak{S}_n\ar@{=}[r] & \mathfrak{S}_n  \ar@{=}[r]  & \mathfrak{S}_n 
}
$$
where all vertical sequences are short exact sequences. 
Thus, in order to show that the map $\on{Aut}_{\mathcal Q(n)}(p) \to 
\pi_1\left(\overline{\textrm{Conf}}^f(\Sigma_g,n),p\right)$ is an isomorphism, 
we are left to show that 
$$
 \phi:\on{B}^f_{g,n} \longrightarrow 
 \pi_1\left(\textrm{Conf}^f(\Sigma_g,n)/\mathfrak S_n,[p_{reg}]\right)
$$
is indeed an isomorphism. This is the case since the map $ \phi$ is exactly 
the isomorphism constructed in \cite[Theorem 13]{BeG}.
This completes the proof of Theorem \ref{Thm:PaBfg}.
\end{proof}

\subsection{An alternative presentation of $\PaB_g^f$}
\label{An alternative presentation}

In this subsection we exhibit an alternative presentation for the module $\PaB_g^f$. 
This will be then used in the particular case $g=1$ to show that the set of genus 
$1$ non-reduced associators over $\C$ is not empty.

Let $\PaB^{bis}_{g,f}$ be the $\PaB^f$-module having $\Pa$ as $\Pa$-module of objects and freely 
generated by morphisms $\tilde A_a^{1,2}$ and $\tilde B_a^{1,2}$ in arity 2 with relations
\begin{flalign}
& \tilde Z_a^{1,\emptyset}=\on{Id}_{1}\,, 
\label{eqn:gRbis} \\
& \tilde Z_a^{123} \tilde Z_a^{12,3}=\Phi^{1,2,3}\tilde Z_a^{1,23}(\Phi^{1,2,3})^{-1} 
(R^{2,1})^{-1}\Phi^{2,1,3}\tilde Z_a^{2,13}(\Phi^{2,1,3})^{-1}(R^{1,2})^{-1}\,, 
\label{eqn:gDbis} \\
& \on{Id}^{12,3} =\left(\tilde Z_a^{12,3}, (\tilde Z_b^{12,3})^{-1}\Phi^{1,2,3}\tilde Z_b^{1,23}R^{2,3}R^{3,2}(\Phi^{1,2,3})^{-1}   \right)\,,	 
\label{eqn:gNbis}  \\
& \Phi^{1,2,3}R^{2,3}R^{3,2}(\Phi^{1,2,3})^{-1}=\left((\tilde A_a^{12,3})^{-1}, 
\tilde B_a^{12,3}\Phi^{1,2,3}(\tilde B_a^{1,23})^{-1}(\Phi^{1,2,3})^{-1} \right)\, 
\label{eqn:gE1bis} \\
& R^{1,2}R^{2,1}(F^{1,2})^{2(g-1)}=\prod_{a=1}^g\left(\tilde A_a^{1,2},(\tilde B_a^{1,2})^{-1}\right). \label{eqn:gE2bis} &	
\end{flalign}

\begin{proposition}\label{PaB:g:f:bis}
As $\PaB$-modules in groupoids having $\Pa$ as $\Pa$-module of objects, 
$\PaB_g^f$ and $\PaB^{bis}_{g,f}$ are isomorphic.
\end{proposition}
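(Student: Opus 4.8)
The plan is to present both sides as freely generated $\PaB^f$-modules with $\Pa$ as module of objects and to produce mutually inverse, object-fixing, $\PaB^f$-linear morphisms between them coming from an explicit change of generating arrows. The change of variables is the one already recorded in the Remark following Example~\ref{Exbgn2}: inside $\PaB^f_g$ one has, for $Z\in\{A,B\}$ and $1\le a\le g$,
\[
\tilde Z_a^{1,2} = (Z_a^{12})^{-1}\, Z_a^{1,2}\, R^{1,2} R^{2,1},
\]
where $Z_a^{12}=Z_a^{1,\emptyset}\circ_1\on{Id}^{1,2}$ is the doubling of the arity-one arrow $Z_a^{1,\emptyset}=Z_a^1$ obtained from $Z_a^{1,2}$ by erasing the second strand (the formula is stated for $Z=A$ in that Remark, and its $Z=B$ analogue holds by the same pictorial definition). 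First I would define $\Psi\colon\PaB^{bis}_{g,f}\to\PaB^f_g$ by sending the free generators $\tilde A_a^{1,2},\tilde B_a^{1,2}$ to the right-hand sides above, now read as genuine arrows of $\PaB^f_g(2)$. By the universal property of $\PaB^{bis}_{g,f}$ it then suffices to verify that these images satisfy relations \eqref{eqn:gRbis}--\eqref{eqn:gE2bis}.

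Each such relation should be deduced from the corresponding relation of Theorem~\ref{Thm:PaBfg} by substituting the change of variables and simplifying, using the hexagons \eqref{eqn:H1}, \eqref{eqn:H2} together with two structural facts: that the doubled arrows $Z_a^{12}$ are images of arity-one arrows, hence natural and commuting appropriately with the remaining strand data; and that erasing a strand sends $Z_a^{12}\mapsto Z_a^1$ and $R^{1,2}R^{2,1}\mapsto\on{Id}$. Concretely, \eqref{eqn:gRbis} follows from \eqref{eqn:gR} by erasing the second strand; the decagon \eqref{eqn:gDbis} is the transcription of the equivalent form \eqref{eqn:Nbis} of \eqref{eqn:gD} established in Lemma~\ref{LE1}; and \eqref{eqn:gNbis}, \eqref{eqn:gE1bis}, \eqref{eqn:gE2bis} come respectively from \eqref{eqn:gN}, \eqref{eqn:gE1}, \eqref{eqn:gE2}. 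The commutator-type relations \eqref{eqn:gNbis} and \eqref{eqn:gE2bis} are the delicate ones: the factors $Z_a^{12}$ carried by the change of variables must be commuted past the remaining arrows by naturality, and the two products of commutators must then be matched term by term, which is exactly where the centrality of the doubled arrows is used.

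For the inverse, solving the displayed identity gives $Z_a^{1,2}=Z_a^{12}\,\tilde Z_a^{1,2}\,(R^{1,2}R^{2,1})^{-1}$; here $Z_a^{12}$ depends only on the arity-one generator, which in $\PaB^{bis}_{g,f}$ is recovered from $\tilde Z_a^{1,2}$ by erasing the first strand. This defines an object-fixing, $\PaB^f$-linear morphism $\Psi'\colon\PaB^f_g\to\PaB^{bis}_{g,f}$, subject to checking (by the same substitution, run backwards) that \eqref{eqn:gR}--\eqref{eqn:gE2} are preserved. Since $\Psi$ and $\Psi'$ are mutually inverse on the generating arrows --- an immediate consequence of the algebraic invertibility of the change of variables --- and both are $\PaB^f$-module morphisms that are the identity on the common object module $\Pa$, they are mutually inverse on all of $\PaB^f_g$ and $\PaB^{bis}_{g,f}$, establishing the isomorphism. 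I expect the main obstacle to be precisely the verification of the decagon \eqref{eqn:gDbis} and of the commutator relations \eqref{eqn:gNbis}, \eqref{eqn:gE1bis}, \eqref{eqn:gE2bis}: these demand repeatedly pushing the doubled arrows $Z_a^{12}$ through the hexagon identities and invoking their naturality, the same bookkeeping as in Lemma~\ref{LE1} but carried out simultaneously for both families of generators and for their doublings.
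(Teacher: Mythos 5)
Your overall strategy coincides with the paper's: exhibit an explicit change of generating arrows in arity $2$ and show that under this substitution the relations \eqref{eqn:gR}--\eqref{eqn:gE2} and \eqref{eqn:gRbis}--\eqref{eqn:gE2bis} are equivalent, so that the universal properties of the two presentations produce mutually inverse $\PaB^f$-module morphisms fixing $\Pa$. There are, however, two concrete problems. First, the change of variables itself. You take $\tilde Z_a^{1,2}=(Z_a^{12})^{-1}Z_a^{1,2}R^{1,2}R^{2,1}$ from the Remark after Example \ref{Exbgn2}; using the consequence $Z_a^{1,2}R^{1,2}Z_a^{2,1}R^{2,1}=Z_a^{12}$ of \eqref{eqn:gD}, this element equals $(R^{2,1})^{-1}(Z_a^{2,1})^{-1}R^{2,1}$. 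The paper's proof instead works with $\tilde Z_a^{1,2}=R^{1,2}(Z_a^{2,1})^{-1}(R^{1,2})^{-1}$, which differs from yours by conjugation by the full twist $R^{1,2}R^{2,1}$, and the full twist does not commute with $Z_a^{2,1}$. This is not cosmetic: already in \eqref{eqn:gE2bis} your substitution turns $\prod_a\left(\tilde A_a^{1,2},(\tilde B_a^{1,2})^{-1}\right)$ into $(R^{2,1})^{-1}\left[\prod_a\left((A_a^{2,1})^{-1},B_a^{2,1}\right)\right]R^{2,1}$, and after applying \eqref{eqn:gE2} with permuted indices you are left with an $F$-factor conjugated by $R^{2,1}$ that must still be identified with the $(F^{1,2})^{2(g-1)}$ of \eqref{eqn:gE2bis}; with the paper's choice the conjugations cancel cleanly. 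So you must first pin down which arrow of $\PaB_g^f(2)$ you are actually naming $\tilde Z_a^{1,2}$ and check it is the one for which the bis relations hold verbatim.

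Second, the verification you defer is the proof. Deducing \eqref{eqn:gDbis} from \eqref{eqn:gD} (even via the massaged form of Lemma \ref{LE1}), and \eqref{eqn:gE1bis} from \eqref{eqn:gE1}, requires a long chain of hexagon substitutions, index permutations and naturality moves; this occupies most of the paper's argument and none of it is routine simplification. Moreover, the doubled arrows $Z_a^{12}=Z_a^1\circ_1\on{Id}^{1,2}$ are not central: by the interchange law they commute with arrows coming from $\PaB^f(2)$ (such as $R^{1,2}$, $F^{1,2}$ and the associativity constraints), but not with $A_b^{1,2}$ or $B_b^{1,2}$, so the ``term by term matching of commutators'' you invoke must be justified by these specific commutation relations rather than by centrality. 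The skeleton of your argument is the right one, but as written it postpones exactly the steps where the proof could fail.
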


\begin{proof}
In $\PaB_g^f$ there are morphisms $\tilde A^{1,2}=R^{1,2}(A^{2,1})^{-1}
(R^{1,2})^{-1},\tilde B^{1,2}=R^{1,2}(B^{2,1})^{-1}(R^{1,2})^{-1}$. These 
correspond topologically to moving the point indexed by 2 in the direction of the generating cycles of $\Sigma_g$.
The fact that the assignment $\Pa \mapsto \Pa$, $(\tilde A^{1,2},\tilde B^{1,2}) 
\mapsto (\tilde A^{1,2},\tilde B^{1,2})$ defines a morphism of $\PaB$-modules 
$\PaB^{bis}_{1} \longrightarrow \PaB_{1}$ is justified topologically by the fact 
that, in $\PaB_g^f$, the paths representing the l.h.s. and r.h.s. of relations 
\eqref{eqn:gRbis}, \eqref{eqn:gDbis}, \eqref{eqn:gNbis},  \eqref{eqn:gE1bis} and 
\eqref{eqn:gE2bis} are homotopic.

In order to prove that this morphism is an isomorphism, let us show that 
\eqref{eqn:gR}, \eqref{eqn:gD}, \eqref{eqn:gN},  \eqref{eqn:gE1} and 
\eqref{eqn:gE2} are equivalent to \eqref{eqn:gRbis}, \eqref{eqn:gDbis}, \eqref{eqn:gNbis},  \eqref{eqn:gE1bis} and 
\eqref{eqn:gE2bis}.

First of all, as we have $\tilde Z_a^{1,2}=R^{1,2}(Z_a^{2,1})^{-1}(R^{1,2})^{-1}$, 
then one can readily see that \eqref{eqn:gR} is equivalent to \eqref{eqn:gRbis}.

Second of all, let's prove that \eqref{eqn:gD} is equivalent to \eqref{eqn:gDbis}. 

On the one hand, in Lemma \ref{LE1}
we showed that relation \eqref{eqn:gD} is equivalent to 
\begin{equation}
Z_a^{12, 3}= \Phi^{1, 2, 3} Z_a^{1, 23} (\Phi^{1, 2, 3})^{- 1} R^{1, 2}
   \Phi^{2, 1, 3} Z_a^{2, 13}  (\Phi^{2, 1, 3})^{- 1} R^{2, 1}.
\end{equation}

On the other hand, doubling one of the strands, applying a permutation in the 
equality $\tilde Z_a^{1,2}=R^{1,2}(Z_a^{2,1})^{-1}(R^{1,2})^{-1}$ and plugging 
the result into relation \eqref{eqn:gDbis}, one obtains
$$
(Z_a^{123})^{-1}R^{12,3} (Z_a^{3,12})^{-1}(R^{12,3})^{-1}=\Phi^{1,2,3}R^{1,23} 
(Z_a^{23,1})^{-1}(R^{1,23})^{-1}(\Phi^{1,2,3})^{-1} 
$$
$$
(R^{2,1})^{-1}\Phi^{2,1,3}R^{2,13} (Z_a^{13,2})^{-1}(R^{2,13})^{-1}(\Phi^{2,1,3})^{-1}(R^{1,2})^{-1}.
$$
The inverse of this equation reads
$$
R^{12,3} Z_a^{3,12}(R^{12,3})^{-1}Z_a^{123}
$$
$$
=R^{1,2}\Phi^{2,1,3}R^{2,13}Z_a^{13,2}(R^{2,13})^{-1}(\Phi^{2,1,3})^{-1}R^{2,1}
\Phi^{1,2,3}R^{1,23}Z_a^{23,1}(R^{1,23})^{-1}(\Phi^{1,2,3})^{-1},
$$
which is equivalent to
$$
Z_a^{123}=R^{12,3} (Z_a^{3,12})^{-1}(R^{12,3})^{-1}R^{1,2}\Phi^{2,1,3}R^{2,13}Z_a^{13,2}(R^{2,13})^{-1}
$$
$$
(\Phi^{2,1,3})^{-1}R^{2,1}\Phi^{1,2,3}R^{1,23}Z_a^{23,1}(R^{1,23})^{-1}(\Phi^{1,2,3})^{-1}.
$$
Now, using 
\begin{itemize}
\item $R^{12,3} (Z_a^{3,12})^{-1}(R^{12,3})^{-1}R^{1,2}=R^{1,2}R^{21,3} (Z_a^{3,21})^{-1}(R^{21,3})^{-1}$,
\item $(Z_a^{3,12})^{-1} = R^{3,21} (Z_a^{213})^{-1}Z_a^{21,3}R^{21,3} $,
\item $Z_a^{13,2} = (R^{2,13})^{-1} (Z_a^{2,13})^{-1}Z_a^{213}(R^{13,2})^{-1}$,
\item $Z_a^{23,1} = (R^{1,23})^{-1} (Z_a^{1,23})^{-1}Z_a^{123}(R^{23,1})^{-1}$,
\end{itemize}
we deduce 
$$
Z_a^{123}=R^{1,2}R^{21,3}R^{3,21} (Z_a^{213})^{-1}Z_a^{21,3}\Phi^{2,1,3}(Z_a^{2,13})^{-1}
Z_a^{213}(R^{13,2})^{-1}(R^{2,13})^{-1}
$$
$$
(\Phi^{2,1,3})^{-1}R^{2,1}\Phi^{1,2,3}(Z_a^{1,23})^{-1}Z_a^{123}(R^{23,1})^{-1}(R^{1,23})^{-1}(\Phi^{1,2,3})^{-1}.
$$
Now, the elements $Z_a^{123}$ and $Z_a^{213}$ can, after switable permutations 
and conjugation with associators, be moved to the rightmost part of the r.h.s. 
of the above equation and cancel out. Using $R^{1,2}R^{21,3}R^{3,21} (Z_a^{213})^{-1}
=Z_a^{12,3}R^{1,2}R^{21,3}R^{3,21}$, we obtain the following equation
$$
Z_a^{12,3}=\Phi^{1, 2, 3} R^{1,23}R^{23,1} (\Phi^{1, 2, 3})^{- 1} \Phi^{1, 2, 3} Z_a^{1, 23}
 (\Phi^{1, 2, 3})^{- 1}(R^{2,1})^{-1}\Phi^{2,1,3}
$$
$$
R^{2,13}R^{13,2}Z_a^{2,13}(\Phi^{2,1,3})^{-1}(R^{1,2})^{-1}(R^{3,12})^{-1} (R^{12,3})^{-1}.
$$
Now by using $R^{2,13}R^{13,2}Z_a^{2,13}=Z_a^{2,13}R^{2,13}R^{13,2}$ and $\Phi^{1, 2, 3} 
(R^{23,1})^{-1}(R^{1,23})^{-1} (\Phi^{1, 2, 3})^{- 1}Z_a^{12,3}=Z_a^{12,3}\Phi^{1, 2, 3}
(R^{23,1})^{-1}(R^{1,23})^{-1}(\Phi^{1, 2, 3})^{- 1}$, we now  left to compute the expression
\begin{equation}\label{T123}
R^{2,13}R^{13,2}(\Phi^{2,1,3})^{-1}(R^{1,2})^{-1}(R^{3,12})^{-1} (R^{12,3})^{-1}
\Phi^{1, 2, 3} R^{1,23}R^{23,1} (\Phi^{1, 2, 3})^{- 1}.
\end{equation}
By using suitable hexagon relations we obtain
\begin{itemize}
\item $R^{2,13}R^{13,2} =(\Phi^{2,1,3})^{-1}R^{2,1}\Phi^{1, 2, 3}R^{2,3}R^{3,2}
(\Phi^{1, 2, 3})^{- 1}R^{1,2}\Phi^{2,1,3}$,
\item $(R^{3,12})^{-1} (R^{12,3})^{-1} =$
$$\Phi^{1, 2, 3} (R^{3,2})^{-1}(R^{2,3})^{-1}(\Phi^{1, 2, 3})^{- 1}(R^{2,1})^{-1}
\Phi^{2,1,3} (R^{3,1})^{-1}(R^{1,3})^{-1}(\Phi^{2,1,3})^{-1}R^{2,1},$$
\item $ \Phi^{1, 2, 3} R^{1,23}R^{23,1} (\Phi^{1, 2, 3})^{- 1} =R^{1,2}\Phi^{2,1,3}
R^{1,3}R^{3,1}(\Phi^{2,1,3})^{-1}R^{2,1} $.
\end{itemize}
By using the above relations we deduce that \eqref{T123} equals $
(\Phi^{2,1,3})^{-1}R^{2,1}R^{1,2}R^{2,1},
$ so that 
$
Z_a^{12,3}=\Phi^{1, 2, 3} Z_a^{1, 23} (\Phi^{1, 2, 3})^{- 1}(R^{2,1})^{-1}\Phi^{2,1,3}
Z_a^{2,13}(\Phi^{2,1,3})^{-1}R^{2,1}R^{1,2}R^{2,1}.
$
Now, in the automorphism group of $(21)3$, we have $$\Phi^{2,1,3}Z_a^{2,13}
(\Phi^{2,1,3})^{-1}R^{2,1}R^{1,2}=R^{2,1}R^{1,2}
\Phi^{2,1,3}Z_a^{2,13}(\Phi^{2,1,3})^{-1}.$$ In conclusion, we obtain 
$$
Z_a^{12,3}=\Phi^{1, 2, 3} Z_a^{1, 23} (\Phi^{1, 2, 3})^{- 1}R^{1,2}
\Phi^{2,1,3}Z_a^{2,13}(\Phi^{2,1,3})^{-1}R^{2,1},
$$
which is precisely equation \eqref{eqn:gD}. 

Third of all, let us assume relations \eqref{eqn:gRbis} and \eqref{eqn:gDbis} and let us 
prove that \eqref{eqn:gE1bis} is equivalent to \eqref{eqn:gE1} and that \eqref{eqn:gNbis} is equivalent to \eqref{eqn:gN}. 
Relation \eqref{eqn:gDbis} for $\tilde Z=\tilde B$ is equivalent to
\[ \Phi^{1, 2, 3}  (\tilde{B}^{1, 23})^{- 1}
   (\Phi^{1, 2, 3})^{- 1} \tilde{B}^{12, 3} = (R^{2,1})^{-1}\Phi^{2, 1, 3}  \tilde{B}^{2, 13} 
   (\Phi^{2, 1, 3})^{- 1} (R^{1,2})^{-1}(\tilde B^{123})^{-1}. \]
Thus, \eqref{eqn:gE1bis} is equivalent to 
\begin{eqnarray*}
\Phi R^{2,3}R^{3,2} \Phi^{- 1} & = & (\tilde A_a^{12,3})^{-1}(R^{2,1})^{-1}\Phi^{2, 1, 3}  \tilde{B}_a^{2, 13} 
   (\Phi^{2, 1, 3})^{- 1} (R^{1,2})^{-1}(\tilde B_a^{123})^{-1}\\
& & \tilde A_a^{12,3}R^{1,2}\Phi^{2, 1, 3}  (\tilde{B}_a^{2, 13} )^{-1}
   (\Phi^{2, 1, 3})^{- 1} R^{2,1}\tilde B_a^{123}
\end{eqnarray*}
Now, as $B_a^{123}$ commutes with all elements in this equation, we simplify it 
and, as we have $(R^{1,2})^{-1}\tilde A_a^{12,3}=\tilde A_a^{21, 3} (R^{1,2})^{-1}$, 
we deduce that \eqref{eqn:gE1bis} is equivalent to
$$
\Phi R^{2,3}R^{3,2} \Phi^{- 1}  =  (\tilde A_a^{12,3})^{-1} (R^{2,1})^{-1}\Phi^{2, 1, 3}
 \tilde{B}_a^{2, 13} (\Phi^{2, 1, 3})^{- 1} \tilde A_a^{21,3}\Phi^{2, 1, 3}   (\tilde{B}_a^{2, 13} )^{-1}
   (\Phi^{2, 1, 3})^{- 1} R^{2,1}.
$$
which is equivalent to
$$
 (\Phi^{2, 1, 3})^{- 1}\tilde A_a^{21,3} R^{2,1}\Phi R^{2,3}R^{3,2} \Phi^{- 1}(R^{2,1})^{-1} 
 \Phi^{2, 1, 3} = \tilde{B}_a^{2, 13} (\Phi^{2, 1, 3})^{- 1} \tilde A_a^{21,3}\Phi^{2, 1, 3}   (\tilde{B}_a^{2, 13} )^{-1}.
$$
Now, by plugging $ \tilde A_a^{21,3}=R^{21,3}(A_a^{3,21})^{-1}(R^{21,3})^{-1}$ and 
$\tilde{B}_a^{2, 13}=R^{2,13}(B_a^{13,2})^{-1}(R^{2,13})^{-1}$ in the above equation and using
\begin{itemize}
\item $  \Phi^{2, 1, 3}  =  R^{21,3}(\Phi^{3,2,1})^{-1}
(R^{2,3})^{-1} \Phi^{2, 3, 1} (R^{1,3})^{-1}  $,
\item $R^{2,1}\Phi R^{2,3}R^{3,2} \Phi^{- 1}(R^{2,1})^{-1} \Phi^{2, 1, 3}  =R^{21,3}(\Phi^{3,2,1})^{-1}
R^{3,2} \Phi^{2, 3, 1} (R^{1,3})^{-1} $,
\item $ B_a^{13,2}=(R^{2,13})^{-1}(B_a^{2,13})^{-1}B_a^{132} (R^{13,2})^{-1}$,
\end{itemize}
we obtain
$$
R^{1,3}(\Phi^{2, 3, 1})^{-1}R^{2,3}\Phi^{3,2,1}(A_a^{3,21})^{-1}(\Phi^{3,2,1})^{-1}
R^{3,2} \Phi^{2, 3, 1} (R^{1,3})^{-1}=R^{2,13}R^{13,2}B_a^{2,13}R^{1,3}(\Phi^{2, 3, 1})^{-1}
$$
$$
R^{2,3}\Phi^{3,2,1}
(A_a^{3,21})^{-1}(\Phi^{3,2,1})^{-1}
(R^{2,3})^{-1} \Phi^{2, 3, 1} (R^{1,3})^{-1} (B_a^{2,13})^{-1}(R^{13,2})^{-1}(R^{2,13})^{-1}.
$$
After suitable permutations of the indices and conjugations with associators, one 
can move the $R^{21,3}R^{13,2}$ term in the r.h.s of the above equation to the
rightmost part, where it cancels out.
We obtain, by performing the permutation $(123) \mapsto (312)$ that
$$
R^{3,2}(\Phi^{1,2,3})^{-1}R^{1,2}\Phi^{2,1,3}(A_a^{2,13})^{-1}(\Phi^{2,1,3})^{-1}
R^{2,1} \Phi^{1,2,3} (R^{3,2})^{-1}
$$
$$
=B_a^{1,32}R^{3,2}(\Phi^{1,2,3})^{-1}R^{1,2}\Phi^{2,1,3}
(A_a^{2,13})^{-1}(\Phi^{2,1,3})^{-1}
(R^{1,2})^{-1} \Phi^{1,2,3} (R^{3,2})^{-1} (B_a^{1,32})^{-1}.
$$
Now, using $B_a^{1,32}R^{3,2}=R^{3,2}B_a^{1,23}$ this equation can be rewritten
$$
R^{1,2}\Phi^{2,1,3}(A_a^{2,13})^{-1}(\Phi^{2,1,3})^{-1}
R^{2,1} 
$$
$$
=\Phi^{1,2,3}B_a^{1,23}(\Phi^{1,2,3})^{-1}R^{1,2}\Phi^{2,1,3}
(A_a^{2,13})^{-1}(\Phi^{2,1,3})^{-1}
(R^{1,2})^{-1} \Phi^{1,2,3}  (B_a^{1,23})^{-1}(\Phi^{1,2,3})^{-1}.
$$
This is equivalent to 
$$
R^{1,2}\Phi^{2,1,3}A_a^{2,13}(\Phi^{2,1,3})^{-1}(R^{1,2})^{-1} \Phi^{1,2,3}  
(B_a^{1,23})^{-1}(\Phi^{1,2,3})^{-1}R^{1,2}\Phi^{2,1,3}
$$
$$
(A_a^{2,13})^{-1}(\Phi^{2,1,3})^{-1}
R^{2,1} \Phi^{1,2,3}B_a^{1,23}(\Phi^{1,2,3})^{-1}=\on{Id}_{(12)3}.
$$
Taking the inverse of this relation and multiplying it by $R^{1,2}R^{2,1}$ in the 
rightmost part of each side we obtain
$$
R^{1,2}R^{2,1}=\Phi^{1,2,3}  (B_a^{1,23})^{-1}\Phi^{1,2,3})^{-1}\Phi^{2,1,3}A_a^{2,13}(\Phi^{2,1,3})^{-1}(R^{1,2})^{-1} \Phi^{1,2,3}B_a^{1,23}
$$
$$
(\Phi^{1,2,3})^{-1}R^{1,2}\Phi^{2,1,3}(A_a^{2,13})^{-1}(\Phi^{2,1,3})^{-1}
R^{2,1}.
$$
Finally, as $\Phi^{1,2,3}  (B_a^{1,23})^{-1}(\Phi^{1,2,3})^{-1}$ comutes with
 $R^{1,2}R^{2,1}$ in the automorphism group of $(12)3$, we obtain 
$$
R^{1,2}\Phi^{2,1,3}A_a^{2,13}(\Phi^{2,1,3})^{-1}R^{2,1}\Phi^{1,2,3}B_a^{1,23}(\Phi^{1,2,3})^{-1}
$$
$$
(R^{1,2})^{-1}(\Phi^{2,1,3})^{-1}(A_a^{2,13})^{-1}\Phi^{2,1,3}(R^{2,1})^{-1}\Phi^{1,2,3}
(B_a^{1,23})^{-1}(\Phi^{1,2,3})^{-1}=R^{1,2}R^{2,1},
$$
which is precisely equation \eqref{eqn:gE1}. 
One can then eventually obtain the equivalence between equations 
\eqref{eqn:gNbis} and \eqref{eqn:gN} using the same procedure

Last of all, after plugging $ (A_a^{1,2})^{-1}=(R^{2,1})^{-1} \tilde A_a^{2,1} R^{2,1} $ and 
$B_a^{1,2} =(R^{2,1})^{-1} (\tilde B_a^{2,1})^{-1} R^{2,1}$ into equation \eqref{eqn:gE2}, we obtain
\begin{eqnarray*}
R^{1,2}R^{2,1} & = & \left((R^{2,1})^{-1} \tilde A_a^{2,1} R^{2,1},(R^{2,1})^{-1} (\tilde B_a^{2,1})^{-1} R^{2,1}  \right)\\
& = & (R^{2,1})^{-1} \left( \tilde A_a^{2,1},  (\tilde B_a^{2,1})^{-1}  \right)R^{2,1}\\
R^{2,1}R^{1,2} & = & (R^{1,2})^{-1} \left( \tilde A_a^{1,2} , (\tilde B_a^{1,2})^{-1}  \right)R^{1,2}\\
R^{1,2}R^{2,1} & = & \left( \tilde A_a^{1,2},  (\tilde B_a^{1,2})^{-1}  \right)
\end{eqnarray*}
which is precisely equation \eqref{eqn:gE2bis}.
\end{proof}

\begin{remark}
One can also notice that for $g=1$, removing the third strand in relation \eqref{eqn:gE1} 
implies relation  \eqref{eqn:gE2bis} and removing the first strand in relation \eqref{eqn:gE1bis} implies relation \eqref{eqn:gE2}.
\end{remark}

\subsection{Genus $g$ Grothendieck--Teichm\"uller groups}

Let us finish this section by defining Grothendieck-Teichm\"uller groups in 
genus $g$ operadicly, and then making explicit descriptions of this groups.

\begin{definition}
The ($\kk$-prounipotent version of the) \textit{genus $g$ 
Grothendieck--Teichm\"uller group
} is defined as the group 
$$
\widehat\GT^f_g(\kk):=\on{Aut}_{\tmop{OpR}\mathbf{Grpd}_\kk}^+(\widehat\PaB^f(\KK),\widehat\PaB^f_g(\KK))
$$ 
 of couples $(F,G)$ where $F\in\widehat\GT^f(\kk)$ and $G$ is an automorphism of the $\widehat\PaB^f(\KK)$-module $\widehat\PaB^f_g(\KK)$ 
 which is the identity on objects and which is compatible with $G$.
\end{definition}

The presentation of $\PaB^f_g$ then implies the following: each 
automorphism $F$ of $\PaB^f_g$ compatible with an automorphism 
$G$ of $\PaB^f$ is uniquely determined by $(\lambda,f)\in \widehat {\on{GT}}(\KK)$ such that 
\begin{itemize}
\item $F(R^{1,2})=(R^{1,2}R^{2,1})^{ \nu}R^{1,2}$,
\item$ F(\Phi^{1,2,3})=f(x,y) \cdot \Phi^{1,2,3}  $,
\item $G(A_a^{1,2})=g_a^{1,2}(X_1^a,X_2^a,Y_1^a,Y_2^a,f_1, f_2 ; 1 \leq a \leq g)$,
\item $G(B_a^{1,2})=h_a^{1,2}(X_1^a,X_2^a,Y_1^a,Y_2^a,f_1, f_2 ; 1 \leq a \leq g)$,
\end{itemize}
where  $\nu=\frac{\lambda-1}{2}$ and $g_a^{1,2},h_a^{1,2}\in \widehat{\on{PB}}^f_{g,2}(\KK)$, for $1\leq a\leq g$. 
These elements satisfy the following relations, induced by \eqref{eqn:gR}, 
\eqref{eqn:gD}, \eqref{eqn:gN}, \eqref{eqn:gE1} and 
\eqref{eqn:gE2}:
\begin{equation} \label{def:GTg:0}
g_a^{\emptyset,1}=1 \quad h_a^{\emptyset,1}=1,
\end{equation}
\begin{equation} \label{def:GTg:1}
(f(\tau_1^2,\tau_2^2)
g_a^{1,2}(\tau_1\tau_2^2\tau_1)^{ \frac{\lambda-1}{2}}\tau_2\tau_1)^3=g_a^{(12)3}
\end{equation}
\begin{equation} \label{def:GTg:2}
(f(\tau_1^2,\tau_2^2)
h_a^{1,2}(\tau_1\tau_2^2\tau_1)^{ \frac{\lambda-1}{2}}\tau_2\tau_1)^3=h_a^{(12)3}
\end{equation}
\begin{equation} \label{def:GTg:3}
u^{2} = (ug_a^{1,2}u, h_a^{1,2}) 
\end{equation}
\begin{equation} \label{def:GTg:N}
1 = (g_a^{1,2}, ug_b^{1,2}u)=(g_a^{1,2}, uh_b^{1,2}u)=(h_a^{1,2}, uh_b^{1,2}u)
=(h_a^{1,2}, ug_b^{1,2}u)
\end{equation}
(identities in $\widehat{\on{B}}^f_{g,3}(\kk)$) where $u=f(\tau_1^2,
\tau_2^2)^{-1}\tau_1^{\lambda} f(\tau_1^2,\tau_2^2)$ and
\begin{equation} \label{def:GTg:4}
\tau_1^{2\lambda}f_1^{2\lambda(g-1)} = \prod_{a=1}^g ((g_a^{1,2})^{-1},h_a^{1,2}).
\end{equation}
The image of the composition in $\widehat\GT^f_g(\kk)$  is given by
\begin{equation*}
(\lambda_1, f_1)(\lambda_2, f_2)=
(\lambda_1\lambda_2, 
 f_1(x^{\lambda_2},f_2(x, y)y^{\lambda_2} f_2(x, y)^{-1}) f_2(x, y))
\end{equation*}
and, for $\nu:=\frac{\lambda-1}{2}$, by
\begin{eqnarray*}
g_a^{1,2} \cdot \tilde{g}_a^{1,2} &=&
g_a^{1,2}(\tilde{g}_1^{1,2},\tau_1^{\nu}\tilde{g}_1^{1,2}\tau_1^{\nu},\tilde{h}_2^{1,2},
\tau_1^{\nu}\tilde{h}_1^{1,2}\tau_1^{\nu},\ldots, \tilde{h}_g^{1,2},\tau_1^{\nu}
\tilde{h}_g^{1,2}\tau_1^{\nu}, \tau_1^{\lambda},\tau_2^{\lambda}).
\end{eqnarray*}

\subsection{Horizontal framed genus $g$ chord diagrams and genus $g$ associators}

\subsubsection{The $\CD^f(\kk)$-module of genus $g$ framed chord diagrams}

Let $\mathfrak{t}^f_{g,n}(\kk)$ denote the graded Lie algebra over $\kk$ generated by 
$t_{ij}$, $1\leq i, j\leq n$, $x_i^a,y_i^a$ for $1\leq i\leq n, 1\leq a\leq g$ with relations  
\eqref{eqn:fS},  \eqref{eqn:fL},  \eqref{eqn:f4T} and the following additional genus 
$g$  relations
\begin{flalign}
& [x_i^a,y_j^b]=\delta_{ab}t_{ij} \quad \text{ for all } i\neq j,   \tag{S$_g$} \label{eqn:Sg} \\
& [x_i^a,x_j^b]=0= [y_i^a,y_j^b] \quad \text{ for all } i\neq j,   \tag{N$_g$} \label{eqn:Ng} \\
& \underset{a=1}{\overset{g}{\sum}} [x_i^a,y_i^a]=-\sum_{j:j\neq i}t_{ij}-2(g-1)t_{i i}, \tag{FT$_g$} \label{eqn:FTg} \\
& [x_k^a,t_{ij}] = [y_k^a,t_{ij}]=0 \quad  \text{ if }\{i,j\} \cap \{k\}=\emptyset,    \tag{FL$_g$} \label{eqn:FLg} \\
& [x_i^a+x_j^a,t_{ij}] = [y_i^a+y_j^a,t_{ij}] =0 \quad  \text{ for all } i, j.    &    \tag{F4T$_g$} \label{eqn:F4Tg} 
\end{flalign}

The Lie algebra $\t^f_{g,n}$ is acted on by the symmetric group $\mathfrak{S}_n$. 
One can show that the $\mathfrak{S}$-module in $grLie_\kk$ 
$$
\mathfrak{t}^f_g (\kk):=\{\mathfrak{t}^f_{g,n} (\kk)\}_{n\geq0}
$$
is a $\t^f(\KK)$-module in $grLie_{\KK}$. Partial compositions are defined as follows: 
for $I,J$ two finite sets and $k\in I$, 

$$
\begin{array}{ccccccc}
\circ_k : & \t^f_{g,I}(\KK) \oplus \t^f_J(\KK)  & \longrightarrow & \t^f_{g,J\sqcup I-\{i\}}(\KK) \\
    & (0,t_{\alpha \beta}) & \longmapsto & t_{\alpha\beta} \\
 & (t_{ij},0) & \longmapsto & 
 \begin{cases}
  \begin{tabular}{ccccc}
  $t_{ij}$ & if & $ k\notin\{i,j\} $ \\
  $\sum\limits_{p\in J} t_{pj}$ & if & $k=i$ \\
  $\sum\limits_{p\in J} t_{ip}$ & if & $j=k$ 
  \end{tabular}
  \end{cases}\\
   & (x^a_i,0) & \longmapsto & 
 \begin{cases}
  \begin{tabular}{ccccc}
  $x^a_i$ & if & $ k \neq i $ \\
  $\sum\limits_{p\in J} x^a_{p}$ & if & $k=i$ 
  \end{tabular}
  \end{cases}\\
     & (y^a_i,0) & \longmapsto & 
 \begin{cases}
  \begin{tabular}{ccccc}
  $y^a_i$ & if & $ k \neq i $ \\
  $\sum\limits_{p\in J} y^a_{p}$ & if & $k=i$ 
  \end{tabular}
  \end{cases}
\end{array}
$$  

We call $\t^f_g({\KK})$ the module of \textit{infinitesimal genus $g$ framed braids}.

\medskip

We also define the $\CD^f(\kk)$-module $\CD^f_g(\kk) := \mathcal{\hat U}(\t^f_g(\kk))$ 
of \textit{genus $g$ framed chord diagrams} whose morphisms can be pictured as 
chords on $n$ vertical strands with extra chords correponding to the generators
 $x^a_i$ and $y^a_i$ as follows

\begin{center}
\tik{\node[point, label=above:$i$] at (1,0) {}; \tell{0}{1}{A_a^+} } 
$ \qquad \text{and} \qquad $ \tik{\node[point, label=above:$i$] at (1,0) {}; \tell{0}{1}{A_a^-} }
\end{center}
The relations introduced in the definition of $\t^f_{g,n}$ can then be pictured as follows: 
\begin{center}

\begin{align}\tag{\ref{eqn:Sg}}
\tik{\node[point, label=above:$i$] at (1,0) {};\node[point, label=above:$j$] at (2,0) {};
\tell{0}{1}{A_a^{-}} \tell[->]{1}{2}{A_a^{+}}\straight[->]{1}{1}\straight{2}{0}}-
\tik{\node[point, label=above:$i$] at (1,0) {};\node[point, label=above:$j$] at (2,0) {};
\tell{0}{2}{A_a^{+}} \tell[->]{1}{1}{A_a^{-}}\straight[->]{2}{1}\straight{1}{0}}
= \tik{\node[point, label=above:$i$] at (1,0) {};\node[point, label=above:$j$] at (2,0) {};
\tell{0}{1}{A_a^{+}} \tell[->]{1}{2}{A_a^{-}}\straight[->]{1}{1}\straight{2}{0}}-
\tik{\node[point, label=above:$i$] at (1,0) {};\node[point, label=above:$j$] at (2,0) {};
\tell{0}{2}{A_a^{-}} \tell[->]{1}{1}{A_a^{+}}\straight[->]{2}{1}\straight{1}{0}}
 =\ \tik{\node[point, label=above:$i$] at (0,-1) {}; \node[point, label=above:$j$] at (1,-1) {};
 \hori[->]{0}{1}{1}{}{}}
\end{align}

\begin{align}\tag{\ref{eqn:Ng}}
\tik{\tell{0}{1}{A_a^{\pm}} \tell[->]{1}{2}{A_a^{\pm}}\straight[->]{1}{1}\straight{2}{0}
\node[point, label=above:$i$] at (1,0) {};\node[point, label=above:$j$] at (2,0) {};}=
\tik{\tell{0}{2}{A_a^{\pm}} \tell[->]{1}{1}{A_a^{\pm}}\straight[->]{2}{1}\straight{1}{0}
\node[point, label=above:$i$] at (1,0) {};\node[point, label=above:$j$] at (2,0) {};}
\end{align}

\begin{align}\tag{\ref{eqn:FTg}}
\underset{a=1}{\overset{g}{\sum}}  \tik{\tell{0}{1}{A_a^{+}} \tell[->]{1}{1}{A_a^{-}}
\straight[->]{1}{1} \node[point, label=above:$i$] at (1,0) {};}-\tik{\tell{0}{1}{A_a^{-}} 
\tell[->]{1}{1}{A_a^{+}}\node[point, label=above:$i$] at (1,0) {};}
=\ - \sum_{j;j\neq i} \tik{ \hori[->]{0}{1}{1} \node[point, label=above:$i$] at (0,-1) {}; 
\node[point, label=above:$j$] at (1,-1) {};} -(2g-1) 
\tik{  \node[point, label=above:$i$] at (0,-1) {}; 
  \draw[->,thick] (0,-1) to (0,-2); 
  \draw[-,thick] (-0.1,-1.5) to (0.1,-1.5)  ;
  }
\end{align}

\begin{align*}\tag{\ref{eqn:FLg}}
\tik{\node[point, label=above:$k$] at (3,0) {};\node[point, label=above:$i$] 
at (1,0) {};\node[point, label=above:$j$] at (2,0) {};
\tell{0}{1}{A_a^{\pm}}
\draw[zell] (0,-1)--(0,-2); 
\hori[->]{2}{1}{1}
\straight[->]{1}{1}
\straight{2}{0}
\straight{3}{0}
}
=
\tik{\node[point, label=above:$k$] at (3,0) {};\node[point, label=above:$i$] 
at (1,0) {};\node[point, label=above:$j$] at (2,0) {};\tell[->]{1}{1}{A_a^{\pm}}
\draw[zell] (0,0)--(0,-1); \hori{2}{0}{1}\straight{1}{0}
\straight[->]{2}{1}\straight[->]{3}{1}} ;
\end{align*}
\begin{align*}
\tik{\node[point, label=above:$i$] at (1,0) {};
\node[point, label=above:$j$] at (2,0) {};
\tell{0}{1}{A_a^{\pm}}
\draw[zell] (0,-1)--(0,-2); 
\straight[->]{1}{1}
 \draw[->,thick] (2,0) to (2,-2); 
  \draw[-,thick] (1.9,-1.5) to (2.1,-1.5)  ;
}
=
\tik{\node[point, label=above:$i$] at (1,0) {};\node[point, label=above:$j$] at (2,0) {};
\tell[->]{1}{1}{A_a^{\pm}}
\draw[zell] (0,0)--(0,-1); 
\straight{1}{0}
\straight[->]{1}{1}
 \draw[->,thick] (2,0) to (2,-2); 
  \draw[-,thick] (1.9,-0.5) to (2.1,-0.5)  ;
} 
\end{align*}

\begin{align*}\tag{\ref{eqn:F4Tg}}
\tik{\node[point, label=above:$i$] at (1,0) {};\node[point, label=above:$j$] at 
(2,0) {};\tell{0}{1}{A_a^{\pm}} \draw[zell] (0,-1)--(0,-2);\hori[->]{1}{1}{1}
\straight[->]{1}{1}\straight{2}{0}}+\tik{\node[point, label=above:$i$] at (1,0) {};
\node[point, label=above:$j$] at (2,0) {};\tell{0}{2}{A_a^{\pm}} 
\draw[zell] (0,-1)--(0,-2);\hori[->]{1}{1}{1}\straight[->]{1}{1}\straight{2}{0}\straight{1}{0}}
= \tik{\node[point, label=above:$i$] at (1,0) {};\node[point, label=above:$j$] at
 (2,0) {};\tell[->]{1}{1}{A_a^{\pm}} \draw[zell] (0,0)--(0,-1);\hori{1}{0}{1} 
 \straight[->]{2}{1}}+\tik{\node[point, label=above:$i$] at (1,0) {};
 \node[point, label=above:$j$] at (2,0) {};\tell[->]{1}{2}{A_a^{\pm}} 
 \draw[zell] (0,0)--(0,-1);\hori{1}{0}{1} \straight[->]{2}{1}\straight[->]{1}{1}}
\end{align*}
\begin{align*}
\tik{\node[point, label=above:$i$] at (1,0) {};
\tell{0}{1}{A_a^{\pm}}
\draw[zell] (0,-1)--(0,-2); 
\straight[->]{1}{1}
  \draw[-,thick] (0.9,-1.5) to (1.1,-1.5)  ;
}
=
\tik{\node[point, label=above:$i$] at (1,0) {};
\tell[->]{1}{1}{A_a^{\pm}}
\draw[zell] (0,0)--(0,-1); 
\straight{1}{0}
  \draw[-,thick] (0.9,-0.5) to (1.1,-0.5)  ;
} 
\end{align*}
\end{center}

\subsubsection{The $\PaCD^f(\kk)$-module of parenthesized framed genus $g$ chord diagrams}

As in the framed genus 0 situation, the module of objects 
$\on{Ob}(\CD^f_g(\kk))$ of $\CD^f_g(\kk)$ is terminal. 
Thus, we have a morphism of modules 
$\omega_2:\Pa=\on{Ob}(\mathbf{Pa}(\kk)\to\on{Ob}(\CD^f_g(\kk))$ over the morphism 
of operads $\omega_1$ from \S\ref{sec-pacdf}, and thus we can define 
the $\PaCD^f(\kk)$-module
\[
\PaCD^f_g(\kk):=\omega_2^\star \CD^f_g(\kk)\,,
\]
in $\mathbf{Cat(CoAss_\KK)}$, of so-called \textit{parenthesized 
genus $g$ framed chord diagrams}. 
We have 
\begin{itemize}
\item $\on{Ob}(\PaCD_g^f(\KK)):=\Pa$,
\item $\on{Mor}_{\PaCD_g^f(\KK)(n)}(p,q):=\on{End}_{\CD_g^f(\KK)(n)}(pt)$.
\end{itemize}

\begin{example}[Notable arrows in  $\mathbf{PaCD}_g(\kk)$]
We have the following arrows $X_a$, $Y_a$ in $\mathbf{PaCD}_g(\kk)(1)$ 
\begin{center}
$X_a=x_1^a \cdot$
\begin{tikzpicture}[baseline=(current bounding box.center)]
\tikzstyle point=[circle, fill=black, inner sep=0.05cm]
 \node[point, label=above:$1$] at (1,1) {};
 \node[point, label=below:$1$] at (1,-0.25) {};
 \draw[->,thick] (1,1) .. controls (1,0) and (1,0).. (1,-0.20); 
\end{tikzpicture}
\qquad\qquad $Y_a=y_1^a \cdot$
\begin{tikzpicture}[baseline=(current bounding box.center)]
\tikzstyle point=[circle, fill=black, inner sep=0.05cm]
 \node[point, label=above:$1$] at (1,1) {};
 \node[point, label=below:$1$] at (1,-0.25) {};
 \draw[->,thick] (1,1) .. controls (1,0) and (1,0).. (1,-0.20); 
\end{tikzpicture}
\end{center} 
and $X^{1,2}_a$, $Y^{1,2}_a$ in $\mathbf{PaCD}_g(\kk)(2)$
\begin{center}
$X^{1,2}_a=x_1^a\cdot$
\begin{tikzpicture}[baseline=(current bounding box.center)]
\tikzstyle point=[circle, fill=black, inner sep=0.05cm]
 \node[point, label=above:$1$] at (1,1) {};
 \node[point, label=below:$1$] at (1,-0.25) {};
 \node[point, label=above:$2$] at (2,1) {};
 \node[point, label=below:$2$] at (2,-0.25) {};
 \draw[->,thick] (1,1) .. controls (1,0) and (1,0).. (1,-0.20); 
 \draw[->,thick] (2,1) .. controls (2,0.25) and (2,0.5).. (2,-0.20);
\end{tikzpicture}
\qquad\qquad $Y^{1,2}_a=y_1^a \cdot$
\begin{tikzpicture}[baseline=(current bounding box.center)]
\tikzstyle point=[circle, fill=black, inner sep=0.05cm]
 \node[point, label=above:$1$] at (1,1) {};
 \node[point, label=below:$1$] at (1,-0.25) {};
 \node[point, label=above:$2$] at (2,1) {};
 \node[point, label=below:$2$] at (2,-0.25) {};
 \draw[->,thick] (1,1) .. controls (1,0) and (1,0).. (1,-0.20); 
 \draw[->,thick] (2,1) .. controls (2,0.25) and (2,0.5).. (2,-0.20);
\end{tikzpicture}
\qquad\qquad $\tilde{X}^{1,2}_a=x_2^a \cdot$
\begin{tikzpicture}[baseline=(current bounding box.center)]
\tikzstyle point=[circle, fill=black, inner sep=0.05cm]
 \node[point, label=above:$1$] at (1,1) {};
 \node[point, label=below:$1$] at (1,-0.25) {};
 \node[point, label=above:$2$] at (2,1) {};
 \node[point, label=below:$2$] at (2,-0.25) {};
 \draw[->,thick] (1,1) .. controls (1,0) and (1,0).. (1,-0.20); 
 \draw[->,thick] (2,1) .. controls (2,0.25) and (2,0.5).. (2,-0.20);
\end{tikzpicture}\\
$\tilde{Y}^{1,2}_a=y_2^a \cdot$
\begin{tikzpicture}[baseline=(current bounding box.center)]
\tikzstyle point=[circle, fill=black, inner sep=0.05cm]
 \node[point, label=above:$1$] at (1,1) {};
 \node[point, label=below:$1$] at (1,-0.25) {};
 \node[point, label=above:$2$] at (2,1) {};
 \node[point, label=below:$2$] at (2,-0.25) {};
 \draw[->,thick] (1,1) .. controls (1,0) and (1,0).. (1,-0.20); 
 \draw[->,thick] (2,1) .. controls (2,0.25) and (2,0.5).. (2,-0.20);
\end{tikzpicture}
\qquad\qquad $X^{12}_a=(x_1^a+ x_2^a) \cdot$
\begin{tikzpicture}[baseline=(current bounding box.center)]
\tikzstyle point=[circle, fill=black, inner sep=0.05cm]
 \node[point, label=above:$1$] at (1,1) {};
 \node[point, label=below:$1$] at (1,-0.25) {};
 \node[point, label=above:$2$] at (2,1) {};
 \node[point, label=below:$2$] at (2,-0.25) {};
 \draw[->,thick] (1,1) .. controls (1,0) and (1,0).. (1,-0.20); 
 \draw[->,thick] (2,1) .. controls (2,0.25) and (2,0.5).. (2,-0.20);
\end{tikzpicture}
\qquad\qquad $X^{12}_a=(y_1^a+ y_2^a) \cdot$
\begin{tikzpicture}[baseline=(current bounding box.center)]
\tikzstyle point=[circle, fill=black, inner sep=0.05cm]
 \node[point, label=above:$1$] at (1,1) {};
 \node[point, label=below:$1$] at (1,-0.25) {};
 \node[point, label=above:$2$] at (2,1) {};
 \node[point, label=below:$2$] at (2,-0.25) {};
 \draw[->,thick] (1,1) .. controls (1,0) and (1,0).. (1,-0.20); 
 \draw[->,thick] (2,1) .. controls (2,0.25) and (2,0.5).. (2,-0.20);
\end{tikzpicture}
\end{center}
\end{example}

\begin{remark}
One can write the elements $X^{12}_a,Y^{12}_a , \tilde{X}^{1,2}_a$ and 
$\tilde{Y}^{1,2}_a$ in terms of $X^{1,2}_a$ and $Y^{1,2}_a$ by means of the 
following relations:
\begin{itemize}
\item $\tilde{X}^{1,2}_a=X^{12}_a -X^{1,2}_a$, $\tilde{Y}^{1,2}_a=Y^{12}_a -Y^{1,2}_a$;
\item $X^{12}_a =X^{12, \emptyset}_a$, $Y^{12}_a =Y^{12, \emptyset}_a$.
\end{itemize}
\end{remark}

\begin{remark}\label{PaCD:fg:rel} 
There is a map of $\mathfrak{S}$-modules $\PaCD^f(\kk) \longrightarrow \PaCD^f_g(\kk)$ 
and we abusively denote $P^{1,2}$, $X^{1,2}$, $H^{1,2}$ and $a^{1,2,3}$ the images 
in $\PaCD^f_g(\kk)$ of the corresponding arrows in $\PaCD^f(\kk)$.  
 The elements $X^{1,2}_a$ and $, Y^{1,2}_a$ are generators of the 
 ${\PaCD}^f(\kk)$-module $\mathbf{PaCD}^f_g(\kk)$
 and satisfy the following relations for all $1\leq a \leq g$:
\begin{itemize}
\item $X_a^{2,1}=(X^{1,2})^{-1}X_a^{1,2}X^{1,2}$, $Y_a^{2,1}=(X^{1,2})^{-1}Y_a^{1,2}X^{1,2}$,
\item $X_a^{\emptyset,2}=Y_a^{\emptyset,2}=0$, $X_a^{1, \emptyset}=
X_a$, $Y_a^{1, \emptyset}=Y_a$,
\item $\tilde{X}_a^{12,3}+a^{1,2,3}X^{1,23}\tilde{X}_a^{23,1}(a^{1,2,3}
X^{1,23})^{-1}+X^{12,3}(a^{3,1,2})^{-1}\tilde{X}_a^{31,2}(X^{12,3}(a^{3,1,2})^{-1})^{-1}=
X_a^{(12)3}$,
\item $\tilde{X}_a^{12,3}+a^{1,2,3}X^{1,23}\tilde{X}_a^{23,1}
(a^{1,2,3}X^{1,23})^{-1}+X^{12,3}(a^{3,1,2})^{-1}
\tilde{X}_a^{31,2}(X^{12,3}(a^{3,1,2})^{-1})^{-1}=Y_a^{(12)3}$,
\item $H^{1,2}=  \left[a^{1,2,3}X_a^{1,23} (a^{1,2,3})^{-1}, X^{1,2}a^{2,1,3}
Y_a^{2,13}(a^{2,1,3})^{-1}
X^{2,1}\right],$
\item $H^{1,2}+(P^1)^{2(g-1)}= \sum_{a=1}^g \left[Y_a^{1,2},X_a^{1,2}
 \right].$
\end{itemize}
\end{remark}

\subsubsection{Genus $g$ associators}

\begin{definition}
  A genus $g$ associator over $\kk$ is couple $(F,G)$ where
  $F\in\Ass^f(\kk)$ is a $\kk$-associator and $G$ is an isomorphism between
  the $\widehat{\PaB}^f(\kk)$-module
  $\widehat{\PaB}^f_g(\kk)$ and the
  $G\mathbf{P}\mathbf{a}\mathbf{C}\mathbf{D}^f(\kk)$-module
  $G\mathbf{P}\mathbf{a}\mathbf{C}\mathbf{D}^f_g(\kk)$ which is the
  identity on objects and which is compatible with $F$. We denote its set
  by
  \[ \mathbf{A}\mathbf{s}\mathbf{s}_g (\kk) \assign 
  \on{Iso}^+_{\tmop{OpR}\mathbf{Grpd}_\kk}\Big((\widehat{\PaB}^f(\kk),
  \widehat{\PaB}^f_g(\kk)),(G\mathbf{P}\mathbf{a}\mathbf{C}\mathbf{D}^f(\kk),
    G\mathbf{P}\mathbf{a}\mathbf{C}\mathbf{D}^f_g(\kk))\Big). \]
\end{definition}

We have a morphism of short exact 
sequences
\begin{equation}
\xymatrix{
1 \ar[r] & \kk^n \ar[r]\ar[d] & \widehat{\on{PB}}^f_{g,n}(\kk) \ar[d] \ar[r] 
& \widehat{\on{PB}}_{g,n}(\kk) \ar[d] \ar[r] & 1 \\
1 \ar[r] & \kk^n \ar[r] & \on{exp}(\hat{\t}^f_{g,n}(\kk)) \ar[r] 
& \on{exp}(\hat{\t}_{g,n}(\kk))  \ar[r] & 1
}
\end{equation}
where the right vertical arrow was constructed in \cite{En4}. This shows 
that the map $ \widehat{\on{PB}}^f_{g,n}(\kk) \to \on{exp}(\hat{\t}^f_{g,n}(\kk))$ 
is a $\kk$-pro-unipotent group isomorphism. We will derive this result from the 
flatness of a connection defined over $\Conff (\Sigma_g,n)$ in a future work.

\begin{theorem}\label{Assg}
  There is a one-to-one correspondence between elements of
  $\mathbf{A}\mathbf{s}\mathbf{s}_g (\kk)$ and elements of the 
  set $\on{Ass}_g(\kk)$ consisting on tuples $(\mu,
 \varphi,A^{1,2}_{1,\pm}, \ldots, A^{1,2}_{g,\pm})$ where $(\mu,
 \varphi) \in \on{Ass}(\kk)$ and $ A^{1,2}_{a,\pm} \in \exp
  (\hat{\mathfrak{t}}^f_{g, 2})$, for $a=1,...,g$, satisfying
   the following equations in $\exp
  (\hat{\mathfrak{t}}^f_{g, 1}(\kk))$:
    \begin{equation} \label{def:g:ass:0}
A_{a,\pm}^{\emptyset,1}=1
\end{equation}
  the following equations in $\exp
  (\hat{\mathfrak{t}}^f_{g, 3}(\kk))$:
  \begin{equation} \label{def:g:ass:1}
\alpha_a^{1,2,3}\alpha_a^{2,3,1} \alpha_a^{3,1,2}=A_{a,\pm} ^{(12)3}, 
\text{\ where\ }   \alpha_a =\varphi^{1,2,3}
A_{a,\pm} ^{1,23} e^{\mu(t_{12}+t_{13})/2}, 
\end{equation}
\begin{equation}\label{def:g:ass:2}
e^{\mu t_{12}} =  \big(e^{\mu t_{12}/2}\varphi^{2,1,3} A_{a,+}^{2,13}
(\varphi^{2,1,3})^{-1}e^{\mu t_{12}/2}, \varphi^{1,2,3}A_{a,-}^{1,23}
(\varphi^{1,2,3})^{-1}\big),
\end{equation}
for all $1\leq a \leq g$ 
\begin{equation}\label{def:g:ass:3}
1=  \big( \varphi^{1,2,3}A_{a,\pm}^{1,23}(\varphi^{1,2,3})^{-1},
e^{\mu t_{12}/2}\varphi^{2,1,3} A_{a,\pm}^{2,13}(\varphi^{2,1,3})^{-1}
e^{\mu t_{12}/2}\big),
\end{equation}
for all $1\leq b<a \leq g$ and the following equation in $\exp
  (\hat{\mathfrak{t}}^f_{g, 2}(\kk))$:
\begin{equation} \label{def:g:ass:4}
e^{\mu( t_{12}+2(g-1) t_1)} =\sum_{a=1}^g \left((A^{1,2}_{a,+} )^{-1},A^{1,2}_{a,-}
 \right).  
\end{equation}
\end{theorem}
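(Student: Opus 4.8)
The plan is to unwind the operadic module definition of $\mathbf{A}\mathbf{s}\mathbf{s}_g(\kk)$ into the explicit cocycle-type equations, exactly as the bitorsor isomorphism \eqref{bitorsor:cl} unwinds $\on{Ass}(\kk)$ in genus $0$. An element of $\mathbf{A}\mathbf{s}\mathbf{s}_g(\kk)$ is a couple $(F,G)$, and I would treat the two coordinates separately. First I would use the Proposition identifying $\Ass^f(\kk)$ with $\on{Ass}(\kk)$ to record that the associator $F$ is the same datum as a pair $(\mu,\varphi)\in\on{Ass}(\kk)$; concretely $F$ sends the generators of $\widehat{\PaB}^f(\kk)$ to $F(R^{1,2})=e^{\mu t_{12}/2}X^{1,2}$, $F(\Phi^{1,2,3})=\varphi\cdot a^{1,2,3}$ and $F(F^{1,2})$ an explicit exponential of $t_1$ (the scalar relation between $\lambda$ and $\mu$ leaving only $\mu$ free). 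This disposes of the first coordinate of the correspondence.

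For the second coordinate I would invoke the presentation of $\widehat{\PaB}^f_g(\kk)$ from Theorem \ref{Thm:PaBfg}: as a module over $\widehat{\PaB}^f(\kk)$ it is freely generated by $A^{1,2}_a$ and $B^{1,2}_a$ ($1\le a\le g$) subject to \eqref{eqn:gR}, \eqref{eqn:gD}, \eqref{eqn:gN}, \eqref{eqn:gE1} and \eqref{eqn:gE2}. Hence a module morphism $G$ compatible with $F$ and identity on objects is completely determined by the group-like elements
$$
A^{1,2}_{a,+}:=G(A^{1,2}_a),\qquad A^{1,2}_{a,-}:=G(B^{1,2}_a)\ \in\ \on{Aut}_{G\PaCD^f_g(\kk)(2)}(12)=\exp(\hat{\mathfrak{t}}^f_{g,2}(\kk)),
$$
and, conversely, prescribing such elements defines a compatible module morphism precisely when they satisfy the images under $F$ of the five defining relations. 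Since all groupoids involved are connected, it suffices to work on automorphism groups of a chosen object in each arity, so the correspondence $(F,G)\mapsto(\mu,\varphi,A^{1,2}_{a,\pm})$ is a bijection onto the set of tuples for which these translated relations hold.

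The core step is then to translate each relation through $F$ and verify it becomes the asserted equation. Applying $F$ and $G$ term by term, absorbing the symmetric-group generators $X^{i,j}$ and the associativity generators $a^{i,j,k}$ into the conjugations and collecting the scalars $e^{\mu t/2}$, I expect \eqref{eqn:gR} to give \eqref{def:g:ass:0}; \eqref{eqn:gD} to give \eqref{def:g:ass:1}, with the braiding $R^{i,jk}\mapsto e^{\mu(t_{ij}+t_{ik})/2}X^{i,jk}$ producing exactly the factor $\alpha_a=\varphi^{1,2,3}A^{1,23}_{a,\pm}e^{\mu(t_{12}+t_{13})/2}$ and its cyclic images; \eqref{eqn:gE1} to give \eqref{def:g:ass:2} (using $R^{1,2}R^{2,1}\mapsto e^{\mu t_{12}}$); \eqref{eqn:gN} to give \eqref{def:g:ass:3} up to the harmless relabelling of indices; and \eqref{eqn:gE2} to give \eqref{def:g:ass:4}, the factor $(F^{1,2})^{2(g-1)}$ contributing the central term $e^{2(g-1)\mu t_1}$. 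Lemma \ref{LE1} and the hexagon rewritings already carried out in the proof of Proposition \ref{PaB:g:f:bis} supply the templates for these manipulations.

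Finally I would argue that a compatible, identity-on-objects module morphism $G$ built from a solution of \eqref{def:g:ass:0}--\eqref{def:g:ass:4} is automatically an isomorphism, so that the bijection lands in $\on{Iso}^+$ and not merely in morphisms. This follows from the prounipotent group isomorphisms $\widehat{\on{PB}}^f_{g,n}(\kk)\simeq\exp(\hat{\mathfrak{t}}^f_{g,n}(\kk))$ recorded in the morphism of short exact sequences preceding the statement: on every automorphism group $G$ is a map of prounipotent groups sending the module generators to elements with the correct leading (weight one) terms $x^a_i,y^a_i$, hence it induces the identity on the associated graded and is therefore an isomorphism. The main obstacle is the bookkeeping in the decagon translation \eqref{eqn:gD}$\to$\eqref{def:g:ass:1}: the three braided-associator blocks $R^{i,jk}\Phi^{j,k,i}$ must be reorganized into the clean cyclic product $\alpha_a^{1,2,3}\alpha_a^{2,3,1}\alpha_a^{3,1,2}$, which is precisely where the hexagon identities do the real work.
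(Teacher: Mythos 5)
Your proposal follows the paper's proof essentially verbatim: both reduce to the presentation of $\widehat{\PaB}^f_g(\kk)$ from Theorem \ref{Thm:PaBfg}, record the images of the generators $A_a^{1,2},B_a^{1,2}$ under $G$ as group-like elements of $\exp(\hat{\mathfrak{t}}^f_{g,2}(\kk))$, and read off equations \eqref{def:g:ass:0}--\eqref{def:g:ass:4} as the images under $(F,G)$ of the five defining relations \eqref{eqn:gR}, \eqref{eqn:gD}, \eqref{eqn:gN}, \eqref{eqn:gE1}, \eqref{eqn:gE2}. The only differences are cosmetic: the paper normalizes by writing $G(A_a^{1,2})=A^{1,2}_{a,+}\cdot X_a^{1,2}$ rather than taking the image outright, and it leaves implicit the closing step (which you spell out via the prounipotent isomorphisms $\widehat{\on{PB}}^f_{g,n}(\kk)\simeq\exp(\hat{\mathfrak{t}}^f_{g,n}(\kk))$) that the resulting module morphism is automatically an isomorphism.
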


\begin{proof}
Let $\tilde{F}$ be a framed $\kk$-associator 
$\widehat{\PaB}^f(\KK) \longrightarrow G \PaCD^f(\KK)$ and let $\tilde{G}$ 
be an isomorphism $$\widehat{\PaB}_g^f(\KK) \longrightarrow G \PaCD_g^f(\KK)$$ of 
$(\widehat{\PaB}^f(\KK),G \PaCD^f(\KK))$-modules 
which is the identity on objects and which is compatible 
with $\tilde{F}$. It corresponds to a unique morphism 
$G:\PaB_g^f\longrightarrow G \PaCD_g^f(\KK)$. 
From the presentation of $\PaB_g^f$, we know that $G$ is 
uniquely determined by the images 
of $A_{a,+}^{1},B_a^{1}\in  \on{Hom}_{\PaB_g^f(\KK)(1)}(1)$ 
and $A_{a,+}^{1,2},B_a^{1,2}\in  \on{Hom}_{\PaB_g^f(\KK)(2)}(12)$, 
for all $1\leq a\leq g$ at the morphisms level. 
Thus, there are elements $ A^{1,2}_{a,\pm}\in \exp
  (\hat{\mathfrak{t}}^f_{g, 2})$, for $a=1,...,g$, such that 
\begin{itemize}
\item $G(A_a^{1,2})=A^{1,2}_{a,+} \cdot X_a^{1,2}$, 
\item $G(B_a^{1,2})=A^{1,2}_{a,-}\cdot Y_a^{1,2}$. 
\end{itemize}
These elements must satisfy the following relations 
\eqref{def:g:ass:0}, \eqref{def:g:ass:1}, \eqref{def:g:ass:2}, \eqref{def:g:ass:3} and 
\eqref{def:g:ass:4}, which are the images of relations \eqref{eqn:gR}, \eqref{eqn:gD}, \eqref{eqn:gN}, 
\eqref{eqn:gE1} and \eqref{eqn:gE2}.
\end{proof}

\subsection{Graded genus $g$ Grothendieck--Teichm\"uller groups}

\begin{definition}
The graded genus $g$ Grothendieck-Teichm\"uller group is the 
group $$\GRT_g(\KK):=\on{Aut}^+_{\tmop{OpR}\mathbf{Grpd}_\kk}(G\PaCD^f(\kk),G\PaCD^f_g(\KK))$$ 
of automorphisms of 
the $G\PaCD^f(\KK)$-module $G\PaCD^f_g(\KK)$ which are the 
identity on objects.  
\end{definition}
Notice that there is an isomorphism 
$$
\on{Aut}^+_{\tmop{OpR}\mathbf{Cat}(\mathbf{CoAlg}_\kk)}(\PaCD^f(\kk),\PaCD^f_g(\KK))\simeq 
\on{Aut}^+_{\tmop{OpR}\mathbf{Grpd}_\kk}(G\PaCD^f(\kk),G\PaCD^f_g(\KK)).
$$
To any element $(F,G)$ in $\GRT_g(\KK)$ one can associate tuples 
$(\mu,g,u^{1,2}_{1,\pm},\ldots,u^{1,2}_{g,\pm})$, such that $(\mu,g)\in\on{GRT}(\kk)$ and
\begin{itemize}
\item $G(X_a^{1,2})=u^{1,2}_{a,+} \cdot X_a^{1,2}$, 
\item $G(Y_a^{1,2})=u^{1,2}_{a,-}\cdot Y_a^{1,2}$. 
\end{itemize}
Here $u^{1,2}_{1,\pm},
\ldots,u^{1,2}_{g,\pm}\in\hat{\t}^f_{g,2}(\kk)$ 
satisfy, for $1 \leq a \leq g$, 
    \begin{equation} \label{def:grt:g:0}
u_{a,\pm} ^{1,\emptyset}=u_{a,\pm} ^1, \quad u_{a,\pm}^{\emptyset,1}=1
\end{equation}
\begin{equation} \label{def:grt:g:1}
\on{Ad}(g^{1,2,3})((u^a_\pm)^{1,23})+
\on{Ad}(g^{2,1,3})((u^a_\pm)^{2,13})+(u^a_\pm)^{3,12}=
x_1^{a,\pm}+x_2^{a,\pm}+x_3^{a,\pm}, 
\end{equation}
\begin{equation} \label{def:grt:g:2}
[\on{Ad}(g^{1,2,3})((u^a_\pm)^{1,23}),(u^a_\pm)^{3,12}]=0, 
\end{equation}
\begin{equation} \label{def:grt:g:N}
[\on{Ad}(g^{2,1,3})((u^b_\pm)^{2,13}), 
\on{Ad}(g^{1,2,3})((u^a_\pm)^{1,23})]]=0, 
\end{equation}
\begin{equation} \label{def:grt:g:3}
[\on{Ad}(g^{2,1,3})((u^a_+)^{2,13}), 
\on{Ad}(g^{1,2,3})((u^a_-)^{1,23})]=t_{12},
\end{equation}
as relations in $\hat{\t}^f_{g,3}({\kk})$ and
\begin{equation} \label{def:grt:g:4}
\sum_{a=1}^g [u^a_+,
u^a_-]=t_{12}+2(g-1)t_1.
\end{equation}
Let us denote by $\on{GRT}_1^{g}(\kk)$ the set of such tuples.
Set $(\mu,g,u^{1,2}_{1,\pm},\ldots,u^{1,2}_{g,\pm})*(\tilde 
\mu,\tilde g,\tilde u^{1,2}_{1,\pm},\ldots,\tilde u^{1,2}_{g,\pm}):= 
(\tilde{\tilde{\mu}},\tilde{\tilde{g}},\tilde{\tilde{u}}^{1,2}_{1,\pm},
\ldots,\tilde{\tilde{u}}^{1,2}_{g,\pm})$, 
where $\tilde{\tilde{\mu}},\tilde{\tilde{g}}$ are as in
subsection \ref{Rators} and, for all $1\leq a \leq g$,
\begin{eqnarray}\label{pdt:upm:2}
\tilde{\tilde{u}}^{1,2}_{a,\pm} & := &  u^{1,2}_{a,\pm}
\Big(\tilde u^{1,2}_{1,\pm}(x_1^a,x^a_2,y_1^a,y^a_2;1\leq a \leq g),
\tilde u^{2,1}_{1,\pm}(x_1^a,x^a_2,y_1^a,y^a_2;1\leq a \leq g)) \\
\nonumber & &  \ldots ,\tilde u^{1,2}_{g,\pm}(x_1^a,x^a_2,y_1^a,
y^a_2;1\leq a \leq g),\tilde u^{2,1}_{g,\pm}
(x_1^a,x^a_2,y_1^a,y^a_2;1\leq a \leq g))\Big).
\end{eqnarray}

The group $\kk^\times$ acts on $\on{GRT}_1^{g}(\kk)$ by rescaling. 
We then set 
$\on{GRT}_{g}(\kk):= \on{GRT}_1^{g}(\kk)\rtimes \kk^\times$. 

\subsubsection{The non framed case for chord diagrams}

Let us consider $g > 0$ and $n \geq 0$ and define $\mathfrak{t}_{g, n}
(\kk)$ as the $\kk$-Lie algebra with generators $x_i^a,y_i^a,t_{ij}$ for $i\neq j\in [n], 1\leq a \leq g$ 
satisfying relations \eqref{eqn:S}, \eqref{eqn:L}, \eqref{eqn:4T} and
\begin{flalign}
& [x_i^a,y_j^b]=\delta_{ab}t_{ij} \quad \text{ for all } i\neq j,   \tag{S$_g$} \label{eqn:Sg1} \\
& [x_i^a,x_j^b]=0= [y_i^a,y_j^b] \quad \text{ for all } i\neq j,   \tag{N$_g$} \label{eqn:Ng1} \\
& \underset{a=1}{\overset{g}{\sum}} [x_i^a,y_i^a]= - \sum_{j:j\neq i}t_{ij}, \tag{T$_g$} \label{eqn:Tg} \\
& [x_k^a,t_{ij}]= [y_k^a,t_{ij}]=0 \quad \textrm{if }\#\{i,j,k\}=3\,,   \tag{L$_g$} \label{eqn:Lg} \\
& [y_i^a+y_j^a,t_{ij}] = [x_i^a+x_j^a,t_{ij}] =0\quad\textrm{for }i\neq j\,.   &   \tag{4T$_g$} \label{eqn:4Tg}
\end{flalign}

The Lie algebra $\mathfrak{t}_{g,
n} (\kk)$ is equipped with a grading given by $deg(x^a_i)  = (1, 0)$,
$deg(y^a_i ) = (0, 1)$. The total degree defines a positive grading on
$\mathfrak{t}_{g, n} (\kk)$; we denote by $\hat{\mathfrak{t}}_{g, n}
(\kk)$ the corresponding completion. If $\kk=\mathbbm{C}$,
we will denote $\mathfrak{t}_{g, n} (\kk) \assign \mathfrak{t}_{g,
n}$.

The Lie algebra $\t_{g, n}(\KK)$ is acted on by the symmetric group $\mathfrak{S}_n$, 
and one can show that the $\mathfrak{S}$-module in $grLie_\kk$ 
$$
\t_{g}({\KK}):=\{\t_{g, n}(\KK) \}_{n \geq 0}
$$
is a $\t(\KK)$-module in $grLie_{\KK}$. 

The collection of the Lie algebras $\t_{g, n}(\KK)$, for $n \geq 1$ is provided with 
the structure of an module over
  the operad $\mathfrak{t}$ in (positively graded finite dimensional) 
Lie algebras over $\kk$, denoted $\t_{g,}(\KK)$. Partial compositions are defined 
as follows: for $I$ a finite set and $i\in I$, 
  $$
  \begin{array}{cccccc}
\circ_k : & \t_{g,I}(\KK) \oplus \t_J(\KK)  & \longrightarrow & \t_{g,J\sqcup I-\{i\}}(\KK) \\
    & (0,t_{\alpha \beta}) & \longmapsto & t_{\alpha\beta} \\
 & (t_{ij},0) & \longmapsto & 
 \begin{cases}
  \begin{tabular}{ccccc}
  $t_{ij}$ & if & $ k\notin\{i,j\} $ \\
  $\sum\limits_{p\in J} t_{pj}$ & if & $k=i$ \\
  $\sum\limits_{p\in J} t_{ip}$ & if & $j=k$ 
  \end{tabular}
  \end{cases}\\
   & (x^a_i,0) & \longmapsto & 
 \begin{cases}
  \begin{tabular}{ccccc}
  $x^a_i$ & if & $ k \neq i $ \\
  $\sum\limits_{p\in J} x^a_{p}$ & if & $k=i$ 
  \end{tabular}
  \end{cases}\\
     & (y^a_i,0) & \longmapsto & 
 \begin{cases}
  \begin{tabular}{ccccc}
  $y^a_i$ & if & $ k \neq i $ \\
  $\sum\limits_{p\in J} y^a_{p}$ & if & $k=i$ 
  \end{tabular}
  \end{cases}
\end{array}
$$ 

Since we are in possession of operad modules $\mathbf{Pa}(\kk)$ and $\widehat{\CD}_g(\kk)$ in 
$\bf{Cat(CoAss_\KK)}$ and of an operad module morphism $f:\Pa\to\on{Ob}(\widehat{\CD}_g(\kk))$, 
we are ready to define the $\PaCD(\kk)$-module 
$$\PaCD_g(\KK):=f^\star \widehat{\CD}_g(\kk)$$ in $\mathbf{Cat(CoAss_\KK)}$ of 
parenthesized genus $g$ chord diagrams. We have $\on{Ob}(\PaCD_g(\KK)):=\Pa$ 
and $\on{Mor}_{\PaCD_g(\KK)(n)}(p,q):=\hat{\mathcal{U}}(\hat\t_{g,n}({\kk}))$.

\subsection{Towards the genus $g$ KZB associator}
Recall the following result.
\begin{theorem}
  \label{formality:thm}(Bezrukavnikov, Enriquez) There is a monodromy morphism
  $\tmop{PB}_{g, n} \to \exp (\hat{\mathfrak{t}}_{g, n})$ inducing an
  isomorphism of Lie algebras $\tmop{Lie} (\tmop{PB}_{g, n})^{\mathbbm{C}}
  \overset{\sim}{\to} \hat{\mathfrak{t}}_{g, n}$.
\end{theorem}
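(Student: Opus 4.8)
The statement is a graded-formality result for the pure surface braid group, so the plan is to combine an explicit transcendental construction of the monodromy morphism with an algebraic identification of $\hat\t_{g,n}$ as the holonomy Lie algebra of $\Conf(\Sigma_g,n)$. Concretely, I would (i) realize $\hat\t_{g,n}$ as the degree completion of the holonomy Lie algebra of the configuration space, (ii) produce the monodromy morphism $\PB_{g,n}\to\exp(\hat\t_{g,n})$ as the parallel transport of a suitable flat connection valued in $\hat\t_{g,n}$, and (iii) upgrade the resulting filtered map to an isomorphism by proving $1$-formality of $\PB_{g,n}$.

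For step (i), I would compute the cohomology ring $\on{H}^\ast(\Conf(\Sigma_g,n))$ together with its cup-product structure. The holonomy Lie algebra of a finite-type, torsion-free, $1$-formal space $X$ is the quotient of the free Lie algebra generated by $\on{H}_1(X)$ in degree $1$ by the ideal generated by the image of the transpose cup product $\on{H}_2(X)\to\Lambda^2\on{H}_1(X)$. I would check that the classes dual to the loops around the diagonals $z_i=z_j$ give the $t_{ij}$, that the classes coming from the $2g$ one-cycles of the $i$-th copy of $\Sigma_g$ give the $x_i^a,y_i^a$, and that the cup-product relations reproduce exactly (S), (L), (4T) together with the genus relations (S$_g$), (N$_g$), (T$_g$), (L$_g$) and (4T$_g$). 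This is a finite homological computation.

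For step (ii), I would use the genus $g$ universal KZB connection of \cite{En4}: a flat connection $\nabla=\on{d}-\omega$ on the trivial $\exp(\hat\t_{g,n})$-bundle over $\Conf(\Sigma_g,n)$, where $\omega$ pairs each $t_{ij}$ with a closed $1$-form having the correct residue along $z_i=z_j$ and pairs $x_i^a,y_i^a$ with the pull-backs to the $i$-th factor of a harmonic basis of $\on{H}^1(\Sigma_g)$. The flatness condition $[\omega,\omega]=0$ is equivalent, term by term, to the defining relations of $\hat\t_{g,n}$, exactly as the flatness of the framed KZ connection was reduced to (FT1) and (FT2) earlier in the paper; the parallel transport of $\nabla$ then yields the morphism $\PB_{g,n}=\pi_1(\Conf(\Sigma_g,n))\to\exp(\hat\t_{g,n})$, and hence an induced map $\Lie(\PB_{g,n})^{\C}\to\hat\t_{g,n}$ of (pro)nilpotent Lie algebras.

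For step (iii), I would argue that this morphism is filtered for the lower-central-series filtration on $\Lie(\PB_{g,n})^{\C}$ and the weight filtration on $\hat\t_{g,n}$, and that the induced map on associated graded objects is, by construction of $\omega$, the canonical surjection from the holonomy Lie algebra identified in (i); a completeness argument then promotes the filtered map to an isomorphism once this graded map is. The crux, and the main obstacle, is precisely the injectivity of the surjection $\on{Hol}(\Conf(\Sigma_g,n))\twoheadrightarrow\on{gr}\Lie(\PB_{g,n})^{\C}$, that is, the $1$-formality of $\PB_{g,n}$. Here I would invoke Bezrukavnikov's theorem that the cohomology algebras of these configuration spaces are Koszul and the spaces formal, or equivalently a Hodge-theoretic purity argument \emph{à la} Morgan ensuring that the weights on $\on{H}^1$ force the Malcev Lie algebra to coincide with its holonomy model. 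A clean way to close the argument is to compare the Hilbert series of $\U(\t_{g,n})$ with that of $\on{gr}\,\C[\PB_{g,n}]$ and observe that they agree, so the graded surjection is an isomorphism in each weight.
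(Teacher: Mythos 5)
The paper does not actually prove this theorem: it is recalled with attribution to Bezrukavnikov and Enriquez, the proofs living in \cite{BZ} and \cite{En4} (the paper only goes on to recall the \emph{construction} of the genus $g$ KZB connection, not its flatness or the induced isomorphism). Your three-step outline — identify $\hat{\mathfrak{t}}_{g,n}$ with the completed holonomy Lie algebra, realize the morphism as the monodromy of a flat $\hat{\mathfrak{t}}_{g,n}$-valued connection, and conclude by $1$-formality — is indeed the strategy of those references, so as a road map it is sound.

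However, step (ii) as written would fail. For $g\geq 1$ there is no flat connection $\mathrm{d}-\omega$ with $\omega$ purely of weight one, pairing $x_i^a,y_i^a$ with harmonic representatives and $t_{ij}$ with a closed form of residue $1$ along $z_i=z_j$: the curvature contribution $\sum_a \alpha_a^{(i)}\wedge\beta_a^{(i)}\,[x_i^a,y_i^a]$ must cancel against the coefficient of $\sum_{j}t_{ij}$ via relation (T$_g$), and this cancellation holds only at the level of cohomology classes, not at the level of forms. This is precisely why the form $\alpha^{\on{KZB}}_{g,n}$ recalled in the paper is only required to equal $\sum_a\omega_a^{(i)}y_a^i$ \emph{modulo} $\hat{\oplus}_{q\geq 2}\mathfrak{t}_{g,n}[1,q]$: the connection carries correction terms in every weight, and constructing them while preserving flatness is the main content of \cite{En4}; it is emphatically not a ``term by term'' verification as in the genus $0$ or framed KZ cases. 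Two smaller issues: in step (i), the holonomy Lie algebra is by definition quadratically presented (generators $H_1=\langle x_i^a,y_i^a\rangle$, relations the image of $H_2\to\Lambda^2H_1$), whereas the presentation of $\mathfrak{t}_{g,n}$ contains relations of weight $3$ and $4$ (e.g.\ $[x_k^a,t_{ij}]=0$ with $t_{ij}$ of weight $2$), so you must check these are consequences of the quadratic ones before identifying the two; and in step (iii) the Hilbert series comparison is circular as stated, since the Hilbert series of $\on{gr}\,\C[\on{PB}_{g,n}]$ is normally \emph{derived from} the formality statement — the honest closing move is the Koszulness of $\on{H}^\ast(\Conf(\Sigma_g,n))$ from \cite{BZ}.
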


Let us recall the construction from \cite{En4} of the universal genus $g$ KZB 
connection (defined over the configuration spaces). Endow the surface 
$\Sigma_g$ with a complex structure and denote $C$ the resulting smooth 
closed complex curve. For any $z\in C$, the fundamental group of $C$ 
based at $z$ is isomorphic to the group generated by $X^a,Y^a,1\leq a \leq g$, 
such that $\prod_{a=1}^g (X^a,Y^a)=1$ and
$
\on{PB}_{g,n}:=\pi_1(\on{Conf}(C,n),\zz)
$
where $\zz:=(z_1,\ldots,z_n)\in \on{Conf}(C,n)$.

Define a map $\rho : \on{PB}_{g,n} \longrightarrow \on{exp}(\hat\f_g^{\oplus n}) $ 
by means of the following composite
$$\on{PB}_{g,n}\to \pi_1(C^n,\zz)\to \pi_g^n \to F_g^n \to \on{exp}(\hat\f_g)^n, $$ where 
\begin{itemize}
\item $F_g$ is the free group with generators $\gamma_a, 1\leq a \leq g$, 
\item $\pi_g\to F_g$ is the composite
$$
\pi_g\to \pi_g/N \to F_g
$$
where $\pi_g\to \pi_g/N$ is the quotient morphism, 
where $N$ is the normal subgroup generated by the $X^a$, $1\leq a \leq g$,
\item $\pi_g/N\to F_g$, $\bar Y^a\mapsto \gamma_a$ is the isomorphism induced from 
the presentation of $\pi_g/N$, where $F_g\to \on{exp}(\hat\f_g)$ is the assignment
$\gamma_a\mapsto \on{exp}(x_a)$.
\end{itemize}

The principal $\on{exp}(\hat\t_{g,n})$-bundle with flat connection on $\on{Conf}(C,n)$ 
corresponding 
to $\rho_0$  is then 
$i^*({\mathcal{P}}_n)$, where $i : \on{Conf}(C,n)\to C^n$ is the inclusion and 
$$({\mathcal{P}}_n\to C^n) = ({\mathcal{P}}^0_1\to C)^n\times_{\on{exp}(\hat \f_g)^n}
\on{exp}(\hat\t_{g,n}),$$ where $({\mathcal{P}}^0_1\to C)$ is the principal 
$\on{exp}(\hat\f_g)$-bundle with flat connection corresponding to the above 
morphism $\pi_g\to F_g\to\on{exp}(\hat\f_g)$. 

Denote the set of flat connections of degree 1 by
$$
\cF_1 = \{\alpha\in\Omega^1(C^n - \on{Diag}, {\mathcal{P}}_n\times_{\on{ad}}
\hat\t_{g,n}[1]) | d\alpha = \alpha\wedge\alpha = 0\}
$$
and denote its subset of holomorphic flat connections by
$$
\cF_1^{hol} = \{\alpha\in H^0(C^n, \Omega^{1,0}_{C^n}\otimes 
( {\mathcal{P}}_n \times_{\on{ad}}\hat\t_{g,n}[1])(*\on{Diag})) | d\alpha = \alpha\wedge\alpha = 0\}
$$
with $\on{Diag} = \sum_{i<j}\on{Diag}_{ij}$ and $\on{Diag}_{ij}\subset C^n$ is the diagonal
corresponding to $z_i=z_j$. Then, Enriquez showed that
there is an element $\alpha_{KZ}\in \cF_1^{hol}$ given by
\begin{equation}
\alpha^{\on{KZB}}_{g,n} = \sum_{i=1}^n \alpha_i,  
\end{equation}
where $\alpha_i \in H^0(C,K_C^{(i)} \otimes ( {\mathcal{P}}_n \times_{\on{ad}}\hat\t_{g,n}[1])
(\sum_{j:j\neq i}\Delta_{ij}))$ expands as $\alpha_i \equiv \sum_{1\leq a \leq g} \omega_a^{(i)}y_a^i$ 
modulo $\hat\oplus_{q\geq 2}\t_{g,n}[1,q]$.

As in \cite{En4},  $K_C^{(i)} = \cO_C^{\boxtimes i-1}
\boxtimes K_C \boxtimes \cO_C^{\boxtimes n-i}$, $\omega_a^{(i)} = 1^{\otimes i-1}
\otimes \omega_a \otimes 1^{\otimes n-i}$, where $(\omega_a)_{1\leq i \leq g}$ are the holomorphic 
differentials such that $\int_{\cA_a}\omega_b = \delta_{ab}$ and the images of $X^a$ 
and $Y^a$ under $\pi_g \to \pi_g^{ab} \simeq H_1(C,\Z)$ are $\mathcal{X}^a$ and 
$\mathcal{Y}^a$ respectively.

Consider integers $(g,n)$ in hyperbolic position (i.e. $2-2g-n<0$) and let $S$ be a 
genus $g$ topological compact oriented surface, $x_1,...,x_n$ $n$ marked points 
on it. Now let $X$ be a Riemann surface modeled on $S$ with genus $g$ and $n$ 
marked points. As $X$ is hyperbolic, the Uniformisation Theorem says that $X$ is
 isomorphic to a quotient $\h/\Gamma$ of the Poincaré half-plane $\h$ by a discrete 
 subgroup $\Gamma$ of $\on{PSL}(2,\RR)$. Fix $\tau\in\h$ and consider a 
 uniformization $\Sigma_{g}$ of $X$. This corresponds to a point $\kappa$ in the 
 moduli space $\mathcal{M}_{g,n}$. Such a point can be described by $3g+n-3$ 
 parameters. Enriquez chowed that, under this uniformization, the one form 
 $\alpha_{KZ}$ induces a flat connection 
$$
\nabla^{\on{KZB}}_{g,n,\kappa}:=\on{d}-\alpha^{\on{KZB}}_{g,n,\kappa}
$$
over $\on{Conf}(\Sigma_{g,\kappa},n)$.
Now, the fundamental group $\pi_1(\Sigma^\times_{g,\kappa},\zz_0)$ of 
$\Sigma^\times_{g,\kappa}:=\Sigma_{g,\kappa}-{p_0}$, where $p_0$ is a puncture 
in the curve, is the nothing but the free group $F(x^1,y^1,x^2,y^2,...,x^g,y^g)$ on 
$2g$ generators. Now choose a non-zero tangent vector $\overrightarrow{v}_0$ of 
$\Sigma_{g,\kappa}$ at $0$. Then, flatness of $\nabla^{KZB}_{g,n,\kappa}$ implies 
the existence of a $\Q$-algebra map
\begin{eqnarray*}
T^{g,\on{KZB}}_{-\overrightarrow{v}_0,\overrightarrow{v}_0} : 
\Q[\pi_1(\Sigma^\times_{g,\tau},-\overrightarrow{v}_0,\overrightarrow{v}_0) ] 
& \longrightarrow & \Q\llangle x^1,y^1,x^2,y^2,...,x^g,y^g \rrangle \\
\gamma & \longmapsto & T^{\on{g,KZB}}_{-\overrightarrow{v}_0,\overrightarrow{v}_0}(\gamma)
:= \sum_{k=0}^{\infty} Reg \int_{\gamma} \alpha^{\on{KZB}}_{g,n,\kappa}
\end{eqnarray*}
\begin{definition}
The non-framed genus $g$ KZB associator is the tuple $$e_g(\kappa)\assign 
(A_1(\kappa),B_1(\kappa),\ldots,A_g(\kappa),B_g(\kappa))$$ where 
\begin{eqnarray*}
A_a(\kappa) & := &  T^{g,\on{KZB}}_{-\overrightarrow{v}_0,\overrightarrow{v}_0}(X_1^a)\\
B_a(\kappa) &: = & T^{g,\on{KZB}}_{-\overrightarrow{v}_0,\overrightarrow{v}_0}(Y_1^a)
\end{eqnarray*}
where $X_1^a$ and $Y_1^a$, $1\leq a \leq g$ are the generating elements in 
$\pi_1(\on{Conf}^f(\Sigma_{g,\kappa},[2]))$.
\end{definition}

We do not know what kind of monodromy relations these associators may have. 
In particular, if we want to relate them to our operadic definition of genus $g$ 
associators we need to extend the universal KZB connection to its framed version.
In this direction, we propose the following:
\begin{conjecture}\label{Conj1}
  There is a flat universal framed KZB connection $\nabla_{g,n,\kappa}^{f\on{KZB}}$ 
  defined on the principal $\on{exp}(\hat{\t}_{g,n}^f)$-bundle over $\on{Conf}^f(C,n)$ 
  constructed as above such that
\begin{itemize}
\item its pullback of $\nabla_{g,n,\kappa}^{f\on{KZB}}$ to the associated 
$\on{exp}(\hat{\t}_{g,n}^f)$-bundle over $C^n$ is 
$$
\nabla_{g,n,\kappa}^{f\on{KZB}}:= \on{d} - \alpha^{f\on{KZB}}_{g,n}
$$
where 
$$
\alpha^{f\on{KZB}}_{g,n}:= \alpha^{\on{KZB}}_{g,n}+ \underset{1 \leqslant i  \leqslant n}{\sum} t_i \on{d}log(\lambda_i);
$$
\item  the 1-form $\alpha^{f\on{KZB}}_{g,n}$ is $(\C^\times)^n$-basic and the 
induced connection on the $\on{exp}(\hat{\t}_{g,n})$-bundle over $\on{Conf}(C,n)$ 
given above coincides with the universal genus $g$ KZB connection defined by Enriquez.
\end{itemize}  
\end{conjecture}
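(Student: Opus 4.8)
The plan is to reduce the flatness of $\nabla^{f\on{KZB}}_{g,n,\kappa}$ to the flatness of Enriquez's connection together with the purely abelian behaviour of the framing directions, in direct analogy with the genus $0$ framed case already treated above (the flatness of $\nabla^{f\on{KZ}}_{n}$). Granting that the framed bundle over $\Conf^f(\Sigma_g,n)$ has been constructed so that its pullback to $C^n$ carries the $1$-form
\[
\alpha^{f\on{KZB}}_{g,n}=\alpha^{\on{KZB}}_{g,n}+\sum_{1\leq i\leq n} t_i\,\on{d}log(\lambda_i),
\]
I would set $\alpha:=\alpha^{\on{KZB}}_{g,n}$ and $\beta:=\sum_i t_i\,\on{d}log(\lambda_i)$ and expand the curvature as
\[
\on{d}\alpha^{f\on{KZB}}_{g,n}-\alpha^{f\on{KZB}}_{g,n}\wedge\alpha^{f\on{KZB}}_{g,n}
=\big(\on{d}\alpha-\alpha\wedge\alpha\big)+\big(\on{d}\beta-\beta\wedge\beta\big)-[\alpha,\beta].
\]

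The first parenthesis vanishes because $\alpha^{\on{KZB}}_{g,n}\in\cF_1^{hol}$ is Enriquez's flat connection recalled from \cite{En4}. In the second, $\on{d}\beta=0$ since each $\on{d}log(\lambda_i)$ is closed and the $t_i$ are constant, whereas $\beta\wedge\beta=\tfrac12\sum_{i,j}[t_i,t_j]\,\on{d}log(\lambda_i)\wedge\on{d}log(\lambda_j)=0$ because the framing generators commute among themselves. Thus everything collapses to the cross term $[\alpha,\beta]=\sum_i[\alpha^{\on{KZB}}_{g,n},t_i]\wedge\on{d}log(\lambda_i)$, and the whole computation turns on a single algebraic fact: the framing generators $t_i=t_{ii}$ are central in $\hat\t^f_{g,n}(\kk)$. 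This is the infinitesimal shadow of the statement, recalled in \eqref{eq:pure framed braid and pure braid}, that $\on{PB}^f_{g,n}$ is a \emph{central} extension of $\on{PB}_{g,n}$ by $\Z^{n}$; passing to $\kk$-prounipotent completions, the kernel $\kk^{n}=\bigoplus_i\kk t_i$ of the surjection $\on{exp}(\hat\t^f_{g,n}(\kk))\to\on{exp}(\hat\t_{g,n}(\kk))$ is central, so each $t_i$ commutes with \emph{every} element of $\hat\t^f_{g,n}(\kk)$. In particular $[\alpha^{f\on{KZB}}_{g,n},t_i]=0$, which forces the curvature to vanish. (Note that relation \eqref{eqn:FTg} merely expresses the central element $t_{ii}$ through $\sum_a[x_i^a,y_i^a]$ and the $t_{ij}$, and is consistent with this centrality.)

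The two remaining clauses are comparatively formal once the bundle is in place. For the $(\C^\times)^n$-basic property I would observe that $\alpha^{\on{KZB}}_{g,n}$ does not involve the fibre coordinates $\lambda_i$ and that $\sum_i t_i\,\on{d}log(\lambda_i)$ is the canonical invariant form on $(\C^\times)^n$ valued in the central subalgebra $\bigoplus_i\kk t_i$; invariance under the fibrewise $(\C^\times)^n$-action is then immediate. Restricting along a section on which the $\lambda_i$ are constant annihilates $\beta$ and returns $\alpha^{\on{KZB}}_{g,n}$, so the connection induced on the $\on{exp}(\hat\t_{g,n})$-bundle over $\Conf(C,n)$ is exactly Enriquez's $\nabla^{\on{KZB}}_{g,n,\kappa}$.

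The genuine obstacle --- and the reason the statement is left as a conjecture --- is the \emph{existence} clause: one must actually build the framed principal bundle and its connection over $\Conf^f(\Sigma_g,n)$ rather than manipulate forms in the coordinate $\lambda_i$ on $C^n$. The framing is an $\on{SO}(2)^{\times n}$-bundle assembled from the frame bundle $\on{SO}(\Sigma_g)\to\Sigma_g$, which is nontrivial since the tangent bundle of a genus $g$ curve has Euler number $\chi(\Sigma_g)=2-2g$; hence $\on{d}log(\lambda_i)$ is only a local expression and globally must be replaced by the connection form of a chosen connection on $\on{SO}(\Sigma_g)$, whose curvature is a nonzero multiple of the area form. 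The delicate step will be to check that this curvature contribution to $\on{d}\beta$ is cancelled precisely by the coefficient $2(g-1)=-\chi(\Sigma_g)$ occurring in \eqref{eqn:FTg}, so that global flatness survives. Supplying this compatibility between the geometry of the surface and the algebra of $\hat\t^f_{g,n}(\kk)$ is exactly the content of the promised future construction of the framed universal KZB connection.
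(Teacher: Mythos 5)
The statement you set out to prove is Conjecture \ref{Conj1}: the paper offers \emph{no} proof of it. It is explicitly left open, with the construction of the framed genus~$g$ KZB connection deferred to future work, so there is no proof to compare yours against, and a complete argument would have to supply exactly what the paper does not. Judged on its own terms, your sketch handles the routine part correctly: the splitting $\alpha^{f\on{KZB}}_{g,n}=\alpha+\beta$ and the curvature expansion are the precise genus~$g$ analogue of the paper's flatness proof for $\nabla^{f\on{KZ}}_n$ (there the form is split as $w_1+w_2$), and the centrality of the $t_{ii}$ in $\hat{\t}^f_{g,n}(\kk)$ does follow from the defining relations (take $i=j$ in \eqref{eqn:F4Tg} and combine with \eqref{eqn:FLg} and the diagonal cases of \eqref{eqn:fL}, \eqref{eqn:f4T}), consistently with the central extension \eqref{eq:pure framed braid and pure braid}. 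Your reading of the coefficient $2(g-1)=-\chi(\Sigma_g)$ in \eqref{eqn:FTg} as the algebraic shadow of the Euler class of $\on{SO}(\Sigma_g)$ is also the right heuristic for why a global flat extension should exist at all.

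The genuine gap --- which, to your credit, you flag yourself --- is the existence clause, and it is not a formality. For $g\neq 1$ the frame bundle of $C$ is nontrivial, so the fibre coordinates $\lambda_i$ and the term $\sum_i t_{ii}\,\on{d}\log(\lambda_i)$ exist only locally; your curvature computation is therefore a local statement. Globalizing it forces a choice of connection on $\on{SO}(C)$, whose curvature makes $\on{d}\beta$ a nonzero multiple of (pullbacks of) an area form, and cancelling this requires a genuinely new correction to $\alpha^{\on{KZB}}_{g,n}$ that re-enters the nonabelian part of $\hat{\t}^f_{g,n}$ through \eqref{eqn:FTg}; verifying that flatness survives this correction is the actual content of the conjecture, and neither you nor the paper establishes it. A smaller imprecision: $\beta$ is not horizontal for the fibrewise $(\C^\times)^n$-action, so $\alpha^{f\on{KZB}}_{g,n}$ is not literally a basic form; the basicness assertion only makes sense modulo the central subalgebra $\bigoplus_i\kk\, t_{ii}$, i.e.\ for the induced form on the quotient $\on{exp}(\hat{\t}_{g,n})$-bundle, which is presumably what the conjecture intends but which you should say explicitly rather than declare ``immediate.'' In sum, your proposal is a correct conditional verification --- \emph{if} the framed bundle with the stated local connection form exists, it is flat, central, and induces Enriquez's connection --- but the construction itself remains open, so the conjecture stands unproved.
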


Let $\kappa$ represent a point in the moduli space $\mathcal{M}_{g,n}$. 
If conjecture \ref{Conj1} holds, then the monodromy of the connection 
$\nabla_{g,n,\kappa}^{f\on{KZB}}$ induces a tuple $$e_g(\kappa)\assign 
(A^f_1(\kappa),B^f_1(\kappa),\ldots,A^f_g(\kappa),B^f_g(\kappa))$$ where 
\begin{eqnarray*}
A^f_a(\kappa) & := &  T^{g,\on{KZB}}_{-\overrightarrow{v}_0,\overrightarrow{v}_0}(X_1^a)\\
B^f_a(\kappa) &: = & T^{g,\on{KZB}}_{-\overrightarrow{v}_0,\overrightarrow{v}_0}(Y_1^a)
\end{eqnarray*}
where $X_1^a$ and $Y_1^a$ are the inverse generating loops in $\pi_1(\on{Conf}^f(\Sigma_{g},2))$.
Let $(2 i \pi, \Phi^f_{\tmop{KZ}})$ be the framed KZ associator coming 
from the framed universal KZ connection defined above.
\begin{conjecture}
The data $(2 i \pi, \Phi^f_{\tmop{KZ}}, e^f_g (\kappa))$, where $$e^f_g (\kappa) = (A^f_1
  (\kappa), B^f_1 (\kappa), \ldots, A^f_g (\kappa), B^f_g (\kappa))$$ is a genus $g$ $\C$-associator.
\end{conjecture}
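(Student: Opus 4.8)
The plan is to establish the statement conditionally on Conjecture~\ref{Conj1}, mimicking the argument that shows the genus~$0$ framed KZ associator lies in $\Ass^f(\C)$ (Theorem~\ref{fKZ}). By Theorem~\ref{Assg} it suffices to produce a tuple $(2i\pi,\Phi^f_{\tmop{KZ}},e^f_g(\kappa))$ whose first two entries form an element of $\on{Ass}(\C)$ — which is exactly the content of Theorem~\ref{fKZ} — and then to check that the monodromy data $A^f_a(\kappa)=:A^{1,2}_{a,+}$ and $B^f_a(\kappa)=:A^{1,2}_{a,-}$ obey equations \eqref{def:g:ass:0}–\eqref{def:g:ass:4}. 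The conceptual point is that, by the proof of Theorem~\ref{Assg}, these equations are precisely the images under regularized monodromy of the defining relations \eqref{eqn:gR}, \eqref{eqn:gD}, \eqref{eqn:gN}, \eqref{eqn:gE1}, \eqref{eqn:gE2} of the module $\PaB^f_g$ given in Theorem~\ref{Thm:PaBfg}. Hence, once I promote the pointwise monodromy to a morphism of $\widehat{\PaB}^f(\C)$-modules $\widehat{\PaB}^f_g(\C)\to G\PaCD^f_g(\C)$ compatible with $\Phi^f_{\tmop{KZ}}$, the relations transport by functoriality.

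First I would use flatness of $\nabla^{f\on{KZB}}_{g,n,\kappa}$ to define parallel transport along paths in $\Conff(\Sigma_g,n)$ depending only on homotopy classes, and regularize it at the tangential base points $\pm\overrightarrow{v}_0$ as in Drinfeld's construction; since $\alpha^{f\on{KZB}}_{g,n}$ has only logarithmic singularities along the diagonals and differs from Enriquez's connection by the abelian framing term $\sum_i t_{ii}\,\on{d}\!\log(\lambda_i)$, the same regularization producing $\Phi^f_{\tmop{KZ}}$ in arity~$3$ yields group-like transports in every arity, valued in $\exp(\hat{\t}^f_{g,n})$. Next I would analyze the asymptotics of the connection near the boundary strata $\partial_{J_1,\dots,J_k}\overline{\Conf}^f(\Sigma_g,n)$ of the ASFM compactification: the restriction to such a stratum is governed by the insertion–coproduct morphisms of $\t^f_g$, so the collection of regularized transports assembles into an operad-module morphism lying over the genus~$0$ framed monodromy map, whose compatibility with $\Phi^f_{\tmop{KZ}}$ is Theorem~\ref{fKZ}. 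From the $n=1,2$ cases one then reads off that the generators $A_a^{1,2},B_a^{1,2}$ are sent to $A^{1,2}_{a,+}\cdot X_a^{1,2}$ (resp.\ to $A^{1,2}_{a,-}\cdot Y_a^{1,2}$), matching the normalization in the proof of Theorem~\ref{Assg}.

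The technical heart is the local computation near the boundary where the marked points traverse the generating cycles of $\Sigma_g$, which controls relations \eqref{eqn:gE1} and \eqref{eqn:gE2}. The delicate point is matching the topological framing count: relation \eqref{eqn:gE2} carries the factor $(F^{1,2})^{2(g-1)}$, and under monodromy this must reproduce the term $2(g-1)t_1$ appearing in \eqref{def:g:ass:4}, mirroring the relation \eqref{eqn:FTg} in $\hat{\t}^f_{g,n}$. Concretely I would show that the holonomy of the framing coordinate around $\prod_a(X^a,Y^a)$ equals $e^{2i\pi(t_{12}+2(g-1)t_1)}$, which should follow by reading the residue of $\alpha^{f\on{KZB}}_{g,n}$ along the relevant boundary divisor together with the identity $\sum_a[x_i^a,y_i^a]=-\sum_{j\neq i}t_{ij}-2(g-1)t_{ii}$. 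The main obstacle, however, is Conjecture~\ref{Conj1} itself: the whole argument is conditional on the existence, flatness, and correct boundary behavior of $\nabla^{f\on{KZB}}_{g,n,\kappa}$, the analytic input still missing. I expect the hardest step to be extending Enriquez's genus~$g$ KZB connection by the framing $1$-form while preserving flatness and the prescribed residues, since this is exactly what forces the numerical coefficient $2(g-1)$ to appear on both the topological and the infinitesimal sides.
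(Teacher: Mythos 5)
The statement you are trying to prove is not proved in the paper: it is stated as a \emph{conjecture}, and the author explicitly writes just before it that ``we do not know what kind of monodromy relations these associators may have.'' There is therefore no paper proof to compare against, and your proposal should be assessed on its own terms as an attempted resolution of an open problem. Viewed that way, it is a reasonable roadmap --- it is essentially the strategy the author signals (extend Enriquez's KZB connection by the framing term as in Conjecture~\ref{Conj1}, take regularized monodromy at tangential base points, and match the resulting tuple against the equations \eqref{def:g:ass:0}--\eqref{def:g:ass:4} of Theorem~\ref{Assg}) --- but it is not a proof, and the gaps are not merely technical.

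Concretely: (1) your entire argument is conditional on Conjecture~\ref{Conj1}, which you acknowledge, so at best you would obtain a conditional statement, not the conjecture itself. (2) Even granting Conjecture~\ref{Conj1}, the step where you ``promote the pointwise monodromy to a morphism of $\widehat{\PaB}^f(\C)$-modules $\widehat{\PaB}^f_g(\C)\to G\PaCD^f_g(\C)$'' is where the real content lives, and your appeal to ``functoriality'' is circular: the relations \eqref{eqn:gD}, \eqref{eqn:gN}, \eqref{eqn:gE1}, \eqref{eqn:gE2} transport to \eqref{def:g:ass:1}--\eqref{def:g:ass:4} \emph{once you have} a module morphism, but establishing that the regularized holonomies respect the presentation of Theorem~\ref{Thm:PaBfg} requires a genuine asymptotic analysis of $\alpha^{f\on{KZB}}_{g,n}$ near every relevant boundary stratum of $\overline{\on{Conf}}^f(\Sigma_g,n)$, including the strata where points traverse the handles. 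Nothing analogous to the explicit genus~$1$ computation of Section~\ref{genus 1 associators} (which relies on the concrete CEE solutions $F^H$, $F^V$ and their transformation behavior) is available for $g\geq 2$: Enriquez's connection for higher genus is constructed abstractly, its regularized holonomies along the $X^a_1,Y^a_1$ are not computed in the literature, and the relations they satisfy are precisely what is unknown. (3) Your claim that the framing holonomy around $\prod_a(X^a,Y^a)$ equals $e^{2i\pi(t_{12}+2(g-1)t_{11})}$ by ``reading the residue'' presupposes that the conjectural framing $1$-form has residues matching \eqref{eqn:FTg}; this is part of what Conjecture~\ref{Conj1} would have to deliver and cannot be asserted. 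In short, your proposal is a sensible plan of attack consistent with the author's intentions, but it does not constitute a proof, and you should present it as a conditional strategy rather than as a completed argument.
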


\section{Torsor comparisons in the elliptic case}
\label{genus 1 associators}

\subsection{Four modules of genus 1 parenthesized braidings}
\label{The genus 1 case}

Since our base space $\mathbb{T}$ is parallelizable and has a translation action, there are four variants of the module of parenthesized elliptic braids corresponding to the framed/unframed and the reduced/non-reduced situations. This subsection is devoted into 
comparing these four operadic modules. 

On the one hand, the above subsection applied to $g=1$ gives a $\PaB^f$-module
\begin{eqnarray*}
\PaB^f_1& :=& \pi_1(\overline{\textrm{Conf}}^f(\mathbb{T},-), \Pa).
\end{eqnarray*}
As $\overline{\textrm{Conf}}(\mathbb{T},-)$ is a module over $\overline{\textrm{C}}(\C,-)$,
we obtain a $\PaB$-module
$$\PaB_1:=\pi_1(\overline{\textrm{Conf}}(\mathbb{T},-), \Pa).$$ 
The operadic pointings are chosen 
to be the unit of of $\PaB^f_1(1)$ and $\PaB_1(1)$ respectively.

On the other hand, as constructed in \cite{CaGo2}, to any finite set $I$ we associate the  ASFM 
 compactification $\overline{\textrm{C}}(\mathbb{S}^1,I)$ of the reduced configuration space $\textrm{C}(\mathbb{S}^1,I):=\textrm{Conf}(\mathbb{S}^1,I)/\mathbb{S}^1$ 
 of $\mathbb{S}^1$.
The inclusion of boundary components provide $\overline{\rm C}(\mathbb{S}^1,-)$ with the structure of a module over the operad 
$\overline{\rm C}(\mathbb{R},-)$ in $\mathbf{Top}$. 

Thus, we can construct a $\PaB^f$-module
$$
\PaB^f_{e\ell\ell}:=\pi_1(\overline{\textrm{Conf}}^f(\mathbb{T},-)/\mathbb{T}, \Pa),
$$
and a $\PaB$-module
$$
\PaB_{e\ell\ell}:=\pi_1(\overline{\textrm{Conf}}(\mathbb{T},-)/\mathbb{T}, \Pa),
$$
Here, the action of $\mathbb{T}$ on 
the configuration space is given by global translation of the marked points.

In \cite{CaGo2}, we showed that, as a $\mathbf{PaB}$-module in groupoids having 
$\mathbf{Pa}$ as $\mathbf{Pa}$-module of objects, $\mathbf{PaB}_{e\ell\ell}$ is freely 
generated by morphisms two morphisms satisfying certain relations.
For each $n\geq 1$ and each $p\in \PaB_{e\ell\ell}(n)$, the group 
$\on{Aut}_{\PaB_{e\ell\ell}(n)}(p)$ is isomorphic to the reduced pure 
braid group $\overline{\on{PB}}_{1,n}$ with $n$ strands on the torus. In \cite{CaGo2}, 
we give a presentation of this group conjugated to the one we use in here 
(i.e. \cite[Definition 5]{BeG}).

As a $\PaB$-module in
    groupoids having $ \textbf{\tmop{\Pa}}$ as
    $ \textbf{\tmop{\Pa}}$-module of objects, $\PaB_1$ is
  isomorphic freely generated by 
$A^{1, 2}$ and $B^{1, 2}$ in arity 2,
together with relations
\begin{flalign}
& A^{\emptyset,2}=\on{Id}_{1}, B^{\emptyset,2}=\on{Id}_{1},		\label{eqn:Pab:1:0}		 \\
& \Phi^{1,2,3}A^{1,23}R^{1,23}\Phi^{2,3,1}A^{2,31}R^{2,31}\Phi^{3,1,2}A^{3,12}R^{3,12} =A^{(12)3},		\label{eqn:Pab:1:1}		\\
& \Phi^{1,2,3}B^{1,23}R^{1,23}\Phi^{2,3,1}B^{2,31}R^{2,31}\Phi^{3,1,2}B^{3,12}R^{3,12} =B^{(12)3},			\label{eqn:Pab:1:2} \\		
& R^{1,2}R^{2,1}= \left(R^{1,2}\Phi^{2,1,3}A^{2,13}(\Phi^{2,1,3})^{-1}R^{2,1},\Phi^{1,2,3}B^{1,23}(\Phi^{1,2,3})^{-1}\right) \label{eqn:Pab:1:3}	\\
&R^{1,2}R^{2,1}=\left((A^{1,2})^{-1},B^{1,2}\right). \label{eqn:Pab:1:4} &	
\end{flalign}

The quotient map $ \on{PB}_{1,n} \to \overline{\on{PB}}_{1,n}=\on{PB}_{1,n}/(X_1
\ldots X_n,Y_1\ldots Y_n)$ induces a unique $\PaB$--module morphism 
$$
F: \PaB_1 \longrightarrow \PaB_{e\ell\ell}
$$
given by the identity on objects and
\begin{itemize}
\item $F(A^{1,2})= (A^{1,2})^{-1}$, $F(B^{1,2})=(B^{1,2})^{-1}$.
\end{itemize}

Recall from \cite{Ho2} that we have a short exact sequence of operads in groupoids
\begin{equation}\label{f}
1 \longrightarrow \PaB \longrightarrow \PaB^f \longrightarrow \mathbf{Z} \longrightarrow 1,
\end{equation}
where $\mathbf{Z}$ is viewed as the operad in groupoids with a single object in each 
arity $n$ and $\Z^n$ as endomorphism of the object. One can also show that we have 
an isomorphism of operads $\PaB^f \simeq \PaB \rtimes \mathbf{Z}$ (see \cite{Wahl} 
for more details).
Then the inclusion $\PaB_1 \to \PaB_1^f$ (which topologically sends the marked points
 to the same marked points with all framings attached to them aligned to the right on the real line) induces a short exact sequence 
\begin{equation}\label{f:1}
1 \longrightarrow \PaB_1 \longrightarrow \PaB_1^f \longrightarrow \Z \longrightarrow 1
\end{equation}
of modules over \eqref{f}.
Then, we have an isomorphism
\begin{eqnarray*}
\PaB^f_1 \simeq & \PaB_1  \rtimes  \mathbf{Z}.
\end{eqnarray*}
This leads to an isomorphism of Drinfeld torsors between the framed and non 
framed genus $1$ non-reduced situations as we will see below.

 Then, one can eventually obtain a further short exact sequence 
\begin{equation}\label{f:ell}
1 \longrightarrow \PaB_{e\ell\ell} \longrightarrow \PaB_{e\ell\ell}^f \longrightarrow \bar\Z \longrightarrow 1
\end{equation}
of modules over \eqref{f}. Here $\bar\Z$ is viewed as the operad in groupoids 
with a single object in each arity $n$ and $\Z^n/\Z$ as endomorphism of the 
object, with diagonal action of $\Z$ on $\Z^n$.

\subsection{Reminders on elliptic associators}

In the genus 1 case, the $\sum_i x_i$ and $\sum_i y_i$ are central 
in $\t_{1,n}({\KK})$, and we also consider the quotient 
$$
\bar\t_{1,n}({\KK}):=\t_{1,n}({\KK})/(\sum_i x_i,\sum_i y_i)\,.
$$ 
In particular, 
$\bar\t_{1,2}(\KK)$ is equal to the free Lie $\kk$-algebra $\f_2(\kk)$ 
on two generators $x=x_1$ and $y=y_2$. 
The Lie algebra $\bar\t_{1,n}$ is acted on by the symmetric group 
$\mathfrak{S}_n$, and one can show that the $\mathfrak{S}$-module 
in $grLie_\kk$ 
$$
\bar\t_{e\ell\ell}({\KK}):=\{\bar\t_{1,n}(\KK) \}_{n \geq 0}
$$
actually is a $\t(\KK)$-module in $grLie_{\KK}$. 

The same formula defines a $\bar\t(\kk)$-module structure on $\bar\t_{e\ell\ell}(\kk)$.
We call $\bar{\t}_{e\ell\ell}({\KK})$ the module of \textit{infinitesimal reduced elliptic braids} and we define the $\CD(\kk)$-module $\CD_{e\ell\ell}(\kk) := 
\mathcal{\hat U}(\bar\t_{e\ell\ell}(\kk))$ of \textit{elliptic chord diagrams}. 
As in the genus zero case, the module of objects 
$\on{Ob}(\CD_{e\ell\ell}(\kk))$ of $\CD_{e\ell\ell}(\kk)$ is terminal. 
Hence we have a morphism of modules $\omega_2:\Pa=\on{Ob}
(\mathbf{Pa}(\kk))\to\on{Ob}(\CD_{e\ell\ell}(\kk))$ over the morphism 
of operads $\omega_1$, and thus we can define the $\PaCD(\kk)$-module
\[
\PaCD_{e\ell\ell}(\kk):=\omega_2^\star \CD_{e\ell\ell}(\kk)\,,
\]
in $\mathbf{Cat(CoAss_\KK)}$, of so-called 
\textit{parenthesized elliptic chord diagrams}. 
There is a map of $\mathfrak{S}$-modules $\PaCD(\kk) 
\longrightarrow \PaCD_{e\ell\ell}(\kk)$ and we abusively denote 
$X^{1,2}$, $H^{1,2}$ and $a^{1,2,3}$ the images in 
$\PaCD_{e\ell\ell}(\kk)$ of the corresponding arrows in $\PaCD(\kk)$. 
We have elements $X^{1,2}_{e\ell\ell}$, $Y^{1,2}_{e\ell\ell}$ in 
$\mathbf{PaCD}_{e\ell\ell}(\kk)(2)$
which are generators of the ${\PaCD}(\kk)$-module 
$\mathbf{PaCD}_{e\ell\ell}(\kk)$ and satisfy a certain number of relations.
The elliptic Drinfeld torsor over $\kk$ is the torsor
$(\widehat{\GT}_{e\ell\ell}(\kk),\Ell(\kk),\GRT_{e\ell\ell}(\kk) )$ defined by 
\begin{eqnarray*}
\Ell(\kk)&:=&\on{Iso}^+_{\tmop{OpR}\mathbf{Grpd}_\kk}
\Big(\big(\widehat{\PaB}(\KK),\widehat{\PaB}_{e\ell\ell}(\KK)\big),\big(G \PaCD(\KK),G \PaCD_{e\ell\ell}(\KK)\big)\Big)\\
\widehat{\GT}_{e\ell\ell}(\kk)&:= & \on{Aut}_{\tmop{OpR}\mathbf{Grpd}_\kk}^+
\big({\widehat\PaB(\KK)},\widehat\PaB_{e\ell\ell}(\KK)\big) \\
\GRT_{e\ell\ell}(\kk)&:=&\on{Aut}^+_{\tmop{OpR}\mathbf{Cat}(\mathbf{CoAlg}_\kk)}\big(\PaCD(\kk),\PaCD_{e\ell\ell}(\KK)\big).
\end{eqnarray*}

There is a torsor isomorphism
\begin{equation}\label{bitorsor:ell}
(\widehat{\GT}_{e\ell\ell}(\kk),\Ell(\kk),\GRT_{e\ell\ell}(\kk)) \longrightarrow 
(\widehat{\on{GT}}_{e\ell\ell}(\kk),\on{Ell}(\kk),\on{GRT}_{e\ell\ell}(\kk)),
\end{equation}
where $(\widehat{\on{GT}}_{e\ell\ell}(\kk),\on{Ell}(\kk),\on{GRT}_{e\ell\ell}(\kk))$ 
is the torsor constructed in \cite[Definition 3.12, Definition 4.1, Subsection 5.2]{En2}.

\subsection{Torsor comparisons}

Let $(\widehat{\GT}_1(\kk),\tilde{\Ell}(\kk),\GRT_{1}(\kk))$ be the Drinfeld 
$\kk$-torsor associated to $\PaB_1$ and $\PaCD_1(\kk)$. 

As we saw before, the genus $g$ Drinfeld torsor is independent of the 
framing data so there are obvious torsor isomorphisms.
\begin{eqnarray}
(\widehat{\GT}^f_1(\kk),\tilde{\Ell}^f(\kk),\GRT^f_{1}(\kk))& \to &(\widehat{\GT}_1(\kk),\tilde{\Ell}(\kk),\GRT_{1}(\kk)) \\
(\widehat{\GT}^f_{e\ell\ell}(\kk),\Ell^f(\kk),\GRT^f_{e\ell\ell}(\kk))&\to & (\widehat{\GT}_{e\ell\ell}(\kk),\Ell(\kk),\GRT_{e\ell\ell}(\kk)).
\end{eqnarray}

It remains to compare 
the reduced and non-reduced versions of the genus 1 Drinfeld torsor.

 There is a one-to-one correspondence between elements of
  $\tilde{\Ell}(\kk)$ and elements of the set 
  $\on{Ass}_1(\kk)$ consisting on tuples $(\mu,
 \varphi, A^{1,2}_{\pm})$ where $(\mu,
 \varphi) \in \on{Ass}(\kk)$ and $ A^{1,2}_{\pm} \in \exp
  (\hat{\mathfrak{t}}_{1, 2})$, satisfying
   the following equations in $\exp
  (\hat{\mathfrak{t}}_{1,1}(\kk))$:
    \begin{equation} \label{def:1:ass:0}
 \quad A_{\pm}^{\emptyset,1}=1
\end{equation}
  the following equations in $\exp
  (\hat{\mathfrak{t}}_{1, 3}(\kk))$:
  \begin{equation} \label{def:1:ass:1}
\alpha^{1,2,3}\alpha^{2,3,1} \alpha^{3,1,2}=A_{\pm} ^{(12)3}, 
\text{\ where\ }   \alpha =\varphi^{1,2,3}
A_{\pm} ^{1,23} e^{\mu(t_{12}+t_{13})/2}, 
\end{equation}
\begin{equation}\label{def:1:ass:2}
e^{\mu t_{12}} =  \big(e^{\mu t_{12}/2}\varphi^{2,1,3} A_{+}^{2,13}
(\varphi^{2,1,3})^{-1}e^{\mu t_{12}/2}, \varphi^{1,2,3}A_{-}^{1,23}(\varphi^{1,2,3})^{-1}\big),
\end{equation}
and the following equation in $\exp
  (\hat{\mathfrak{t}}_{1, 2}(\kk))$:
\begin{equation} \label{def:1:ass:3}
e^{\mu t_{12}} = \left((A^{1,2}_{+} )^{-1},A^{1,2}_{-}
 \right).  
\end{equation}

\begin{proof}
This is a straightforward application of Theorem \ref{Assg} for $g=1$ and forgetting about the framing.
\end{proof}

\begin{proposition}\label{Pres1:1}
The set $\on{Ass}_1(\kk)$ is isomorphic to the set consisting on tuples $(\mu,
 \varphi,\tilde A^{1}_{\pm}, \tilde A^{1,2}_{\pm})$ where $(\mu,
 \varphi) \in \on{Ass}(\kk)$and $ \tilde A^{1,2}_{\pm} \in \exp
  (\hat{\mathfrak{t}}_{1, 2})$, satisfying
   the following equations in $\exp
  (\hat{\mathfrak{t}}_{1,1}(\kk))$:
\begin{equation}\label{AS1}
\tilde A_{\pm}^{1,\emptyset}=1,
\end{equation}
\begin{equation}\label{AS2}
  \tilde A_{\pm}^{12, 3}  \tilde A_{\pm}^{123} =  
  \varphi^{1, 2, 3}   \tilde A_{\pm}^{1, 23}(\varphi^{1, 2, 3})^{- 1} e^{-\mu t_{12}/2}
  \varphi^{2, 1, 3}   \tilde A_{\pm}^{2, 13}  (\varphi^{2, 1, 3})^{- 1} e^{-\mu t_{12}/2},
\end{equation}
\begin{equation}\label{AS3}
( (\tilde A_+^{12,3})^{-1}, (\tilde A_-^{12,3})^{-1}\varphi^{-1} \tilde A_-^{1,23}\varphi) =
 \varphi e^{\mu t_{23}}\varphi^{-1},
\end{equation}
and the following equation in $\exp
  (\hat{\mathfrak{t}}_{1, 2}(\kk))$:
\begin{equation}\label{AS4}
e^{\mu t_{12}} = \left(\tilde A^{1,2}_{+} ,(\tilde A^{1,2}_{-})^{-1}
 \right).  
\end{equation}
\end{proposition}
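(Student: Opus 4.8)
The plan is to obtain Proposition \ref{Pres1:1} as the graded (chord-diagram) shadow of Proposition \ref{PaB:g:f:bis}, specialized to $g=1$ and forgetting the framing, by transporting the equivalence of the two presentations of $\PaB_1$ through a genus $1$ associator $(\mu,\varphi)$. First I would fix the change of variables. On the braid side the passage between the generators of $\PaB_1$ and those of $\PaB^{bis}_1$ is governed by $\tilde Z^{1,2}=R^{1,2}(Z^{2,1})^{-1}(R^{1,2})^{-1}$ for $Z\in\{A,B\}$ (see the proof of Proposition \ref{PaB:g:f:bis}). Applying the associator, which sends $R^{1,2}\mapsto e^{\mu t_{12}/2}X^{1,2}$, $\Phi\mapsto\varphi$ and $A^{1,2}\mapsto A^{1,2}_{+}\cdot X_{\cdot}^{1,2}$, $B^{1,2}\mapsto A^{1,2}_{-}\cdot Y_{\cdot}^{1,2}$, the conjugation by the braiding $X^{1,2}$ implements the $\mathfrak S_2$-action (using $X^{1,2}Z^{2,1}(X^{1,2})^{-1}=Z^{1,2}$ from Remark \ref{PaCD:fg:rel}), so that the image of $\tilde A^{1,2}$ is purely pro-unipotent. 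This dictates the change of variables on the chord-diagram side, namely $\tilde A^{1,2}_{\pm}:=e^{\mu t_{12}/2}(A^{1,2}_{\pm})^{-1}e^{-\mu t_{12}/2}\in\exp(\hat{\mathfrak t}_{1,2})$. Being inversion followed by conjugation, this is visibly a bijection on tuples $(\mu,\varphi,A^{1,2}_{\pm})\leftrightarrow(\mu,\varphi,\tilde A^{1,2}_{\pm})$, so it only remains to match the two systems of relations.

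Next I would verify that, under this substitution, the defining equations \eqref{def:1:ass:0}--\eqref{def:1:ass:3} of $\on{Ass}_1(\kk)$ are equivalent to \eqref{AS1}--\eqref{AS4}. This is carried out by repeating, in the group $\exp(\hat{\mathfrak t}_{1,n})$, the chain of hexagon and pentagon manipulations of the proof of Proposition \ref{PaB:g:f:bis}, with every occurrence of $R^{1,2}$ replaced by $e^{\mu t_{12}/2}X^{1,2}$ and every $\Phi$ by $\varphi$; the hexagon and pentagon identities needed for these substituted symbols hold precisely because $(\mu,\varphi)\in\on{Ass}(\kk)$. Concretely, \eqref{AS1} is the image of \eqref{eqn:gRbis} and matches \eqref{def:1:ass:0}; the equivalence of \eqref{def:1:ass:1} with \eqref{AS2} is the graded image of the equivalence of \eqref{eqn:gD} with \eqref{eqn:gDbis}, which itself passes through Lemma \ref{LE1}; the equivalence of \eqref{def:1:ass:2} with \eqref{AS3} is the graded image of the equivalence of \eqref{eqn:gE1} with \eqref{eqn:gE1bis}; and the equivalence of \eqref{def:1:ass:3} with \eqref{AS4} is the graded image of the equivalence of \eqref{eqn:gE2} with \eqref{eqn:gE2bis}, where at $g=1$ the factor $(F^{1,2})^{2(g-1)}$ disappears and the product reduces to a single commutator. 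The naturality relation \eqref{eqn:gNbis} is vacuous for $g=1$. The apparent reordering of $\tilde A^{12,3}$ and the total class $\tilde A^{123}$ between \eqref{eqn:gDbis} and \eqref{AS2} is justified by the centrality of $\sum_i x_i$ and $\sum_i y_i$ in $\hat{\mathfrak t}_{1,n}$, which makes the total classes central.

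The hard part will be the bookkeeping of the two graded layers. Each element on the braid side decomposes into its pro-unipotent part, living in $\exp(\hat{\mathfrak t}_{1,n})$, and its symmetric-group part, the group-like braidings $X^{i,j}$; every manipulation transported from Proposition \ref{PaB:g:f:bis} must be split accordingly. One checks that the $X$-parts cancel exactly as the corresponding permutations do in $\on{B}_{1,n}$ — so that no spurious braiding survives — while the pro-unipotent parts reassemble into the stated relations. The central exponential factors $e^{\pm\mu t_{ij}/2}$ require special care: one must track where they accumulate along the hexagon rewritings and use that each $t_{ij}$ is $\mathfrak S$-invariant (so conjugation by $X^{i,j}$ fixes it) in order to see that the central contributions combine into exactly the exponents $e^{-\mu t_{12}/2}$, $\varphi e^{\mu t_{23}}\varphi^{-1}$ and $e^{\mu t_{12}}$ appearing in \eqref{AS2}, \eqref{AS3} and \eqref{AS4}. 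Once these two kinds of terms are cleanly separated and matched, the equivalence of the relation systems — and hence the claimed bijection — follows.
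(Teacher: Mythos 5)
Your proposal is correct and follows essentially the same route as the paper: the paper's own proof likewise specializes Proposition \ref{PaB:g:f:bis} to $g=1$, forgets the framing to obtain the alternative presentation of $\PaB_1$, and then declares the translation of the two systems of relations through the associator ``straightforward'' --- the step you flesh out with the change of variables and the two-layer bookkeeping. The only cosmetic caveat is that the image of $\tilde A^{1,2}$ is not literally pro-unipotent but a pro-unipotent coefficient times the group-like generator $\tilde X^{1,2}_a$ (so your displayed formula for $\tilde A^{1,2}_{\pm}$ holds only after the index swap and the central $e^{x_1^a+x_2^a}$ factor are absorbed), which your subsequent separation of the $X$-parts from the pro-unipotent parts already accounts for.
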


\begin{proof}
By using proposition \ref{PaB:g:f:bis} applied to $g=1$ and forgetting about the framing, one can show that $\PaB_1$ has the following alternative presentation
\begin{flalign}
& \tilde Z^{1,\emptyset}=\on{Id}_{1}\,, 
\label{eqn:Rbis} \\
& \tilde Z^{123} \tilde Z^{12,3}=\Phi^{1,2,3}\tilde Z^{1,23}(\Phi^{1,2,3})^{-1} 
(R^{2,1})^{-1}\Phi^{2,1,3}\tilde Z^{2,13}(\Phi^{2,1,3})^{-1}(R^{1,2})^{-1}\,, 
\label{eqn:Dbis} \\
& \Phi^{1,2,3}R^{2,3}R^{3,2}(\Phi^{1,2,3})^{-1}=\left((\tilde A^{12,3})^{-1}, 
\tilde B^{12,3}\Phi^{1,2,3}(\tilde B^{1,23})^{-1}(\Phi^{1,2,3})^{-1} \right)\, 
\label{eqn:E1bis} \\
& R^{1,2}R^{2,1}=\left(\tilde A^{1,2},(\tilde B^{1,2})^{-1}\right). \label{eqn:E2bis} &	
\end{flalign}
Then the equivalence between equations \eqref{def:1:ass:0}--\eqref{def:1:ass:3} and equations \eqref{AS1}--\eqref{AS4} follow straightforwardly.
\end{proof}

One can prove 
using the monodromy of the non-reduced version of the universal elliptic 
KZB connection contained in \cite{CEE} that $\tilde{\Ell}(\C)$ is not empty.  

\begin{theorem}
The set $\tilde{\Ell}(\C)$ is not empty.
\end{theorem}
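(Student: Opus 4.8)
The plan is to reduce the statement to the existence of a single complex point of $\on{Ass}_1(\C)$ and then to produce such a point by the monodromy of the non-reduced universal elliptic KZB connection of \cite{CEE}. By the correspondence $\tilde{\Ell}(\kk)\cong\on{Ass}_1(\kk)$ recorded just before the theorem (Theorem \ref{Assg} with $g=1$, forgetting the framing), and by its reformulation in Proposition \ref{Pres1:1}, it suffices to exhibit a tuple $(\mu,\varphi,\tilde A^{1,2}_{+},\tilde A^{1,2}_{-})$ over $\C$ satisfying \eqref{AS1}, \eqref{AS2}, \eqref{AS3}, \eqref{AS4}. First I would fix the genus $0$ data by taking $(\mu,\varphi)=(2\pi\i,\Phi_{\tmop{KZ}})$, the Drinfeld KZ associator, which is a standard element of $\on{Ass}(\C)$; this is exactly the restriction of the elliptic data to the divisor where the two marked points collide.

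Next I would construct the elliptic part geometrically. Fix $\tau\in\mathbb{H}$ and let $E_\tau$ be the corresponding elliptic curve. Following \cite{CEE} (the non-reduced version, with values in the full $\hat{\t}_{1,2}(\C)$ rather than the reduced quotient $\hat{\bar\t}_{1,2}$ used in \cite{En2}), consider the universal elliptic KZB connection $\nabla^{\on{KZB}}_{1,2}$ on the trivial $\on{exp}(\hat{\t}_{1,2}(\C))$-principal bundle over $\Conf(E_\tau,2)$, which is flat by the flatness theorem of \cite{CEE}. I would then define $\tilde A^{1,2}_{+}(\tau)$ and $\tilde A^{1,2}_{-}(\tau)$ as the regularized parallel transports of $\nabla^{\on{KZB}}_{1,2}$ along the paths in which the point indexed by $2$ traverses the $A$-cycle, respectively the $B$-cycle, of $E_\tau$ while the point indexed by $1$ is held fixed, computed with respect to the tangential base point $(-\overrightarrow{v}_0,\overrightarrow{v}_0)$. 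Flatness guarantees that these are well-defined group-like elements of $\on{exp}(\hat{\t}_{1,2}(\C))$ and that parallel transport defines a homomorphism out of $\pi_1\big(\Conf(E_\tau,2)\big)$.

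To check the relations I would use that $\pi_1\big(\Conf(E_\tau,2)\big)$ is the genus $1$ pure braid group whose presentation underlies Proposition \ref{PaB:g:f:bis} (it is the one recalled from \cite{BeG}), so that the defining relations of this group are sent by the monodromy homomorphism precisely to the relations \eqref{AS1}--\eqref{AS4}. Concretely: \eqref{AS1} is the triviality of the arity-one transport before both points are present; the degeneration of $\nabla^{\on{KZB}}_{1,2}$ near the diagonal produces the factors $\Phi_{\tmop{KZ}}$ and $e^{\pi\i t_{12}}$, which yields \eqref{AS2} with the normalization $\mu=2\pi\i$; the single surface relation $(X,Y)=1$ of $\pi_1(E_\tau)$ maps to the commutator identity \eqref{AS4} with value $e^{2\pi\i t_{12}}$; and the remaining naturality/commutation relation gives \eqref{AS3}. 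Transporting through Proposition \ref{Pres1:1} back to $(\mu,\varphi,A^{1,2}_{\pm})$ satisfying \eqref{def:1:ass:0}--\eqref{def:1:ass:3} then shows $\on{Ass}_1(\C)\neq\emptyset$, hence $\tilde{\Ell}(\C)\neq\emptyset$.

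The main obstacle will be the bookkeeping needed to reconcile the conventions of \cite{CEE} with the non-reduced normalizations \eqref{AS1}--\eqref{AS4}. In particular one must lift the elliptic associator of \cite{CEE,En2} from the reduced Lie algebra $\hat{\bar\t}_{1,2}$ to $\hat{\t}_{1,2}$, tracking the monodromy of the central classes $x_1+x_2$ and $y_1+y_2$ so that it is compatible with \eqref{AS1}, and one must carry out the regularization of the transport at the tangential base points carefully enough to obtain honest group-like series and the correct factor $e^{2\pi\i t_{12}}$ in \eqref{AS4}. Once this dictionary between the \cite{CEE} elliptic associator and $(\tilde A^{1,2}_{+},\tilde A^{1,2}_{-})$ is established, the identities \eqref{AS2}, \eqref{AS3}, \eqref{AS4} follow from flatness together with the monodromy relations already proved in \cite{CEE}.
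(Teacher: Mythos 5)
Your proposal follows essentially the same route as the paper: reduce via Theorem \ref{Assg} (at $g=1$, forgetting the framing) and Proposition \ref{Pres1:1} to exhibiting a tuple satisfying \eqref{AS1}--\eqref{AS4}, then produce it as the monodromy of the non-reduced universal elliptic KZB connection of \cite{CEE}, with $(\mu,\varphi)=(2\pi\i,\Phi_{\tmop{KZ}})$, checking the relations by pushing the defining relations of the genus $1$ (framed-forgotten) braid group presentation through the monodromy morphism. The paper carries out the bookkeeping you flag as the main obstacle — explicit solutions $F^H$, $F^V$ on the domains $H_n$, $D_n$, the morphisms $\gamma_n:\on{B}_{1,n}\to\on{exp}(\hat\t_{1,n})\rtimes\mathfrak{S}_n$, and the arity-$3$ braid relations yielding \eqref{AS2}--\eqref{AS3} — but the strategy is identical.
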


\begin{proof}
The proof goes along the same lines as the one in \cite{CEE} but since our 
conventions for the fundamental group generators and monodromy actions 
differ from it, we give the proof in full detail.
Recall our conventions for monodromy actions from \cite[Appendix A.]{CG}.
In \cite{CEE}, it was shown that there is a flat universal elliptic KZB connection 
over $\Conf(E_\tau,n)$, where $\tau\in\h$ and $E_\tau$ is a normalized 
elliptic curve extending to the non reduced moduli space of marked elliptic 
curves.
For $\tau\in\mathfrak H$, let $U_{\tau,n}\subset\C^n-{\rm Diag}_{\tau,n}$ be the 
open subset of all $\zz=(z_1,\dots,z_n)$ of the form $z_i=a_i+\tau b_i$, where $0<a_1<\cdots<a_n<1$ 
and $0<b_n<\cdots<b_1<1$. If $\zz_0\in U_{\tau,n}$, then it defines a point both in the ordered and 
unordered configuration spaces $\textrm{Conf}(E_{\tau},n)$ and $\textrm{Conf}(E_{\tau},[n])$.
As $U_{\tau,n}$
is simply connected, a solution of the elliptic KZB system in 
\cite[Subsection 4.1]{CEE} on this domain is then unique. Then there is a unique solution $F^{(n)}(\zz)$ 
with the prescribed expansion of \cite[Subsection 4.1]{CEE} to this system 
on $U_{\tau,n}$. The domains $H_n:= \{\zz\in\C^n | z_i = a_i + b_i \tau, 
a_i,b_i\in\RR, 0<a_1<a_2<...<a_n<1\}$ and $D_n := \{\zz\in
\C^n | z_i = a_i + b_i \tau, a_i,b_i\in\RR, 
0<b_n<\cdots<b_1<1\}$ are also simply connected and invariant, 
and we denote by $F^H(\zz)$ and $F^V(\zz)$ the prolongations
of $F^{(n)}(\zz)$ to these domains.
Then we have solutions of the elliptic (non-reduced)
KZB system on $H_n$ and $D_n$ given by $\zz\mapsto F^H(\zz + \sum_{j=i}^n \delta_i)$
and $\zz\mapsto e^{2\pi\i( x_i + ... + x_n)}F^V(\zz + 
\tau(\sum_{j=i}^n \delta_i))$ respectively. We define $A_i^F,B_i^F
\in \on{exp}(\hat{{\t}}_{1,n})$ by 
$$
F^H(\zz)=
A_i^F F^H\left(\zz + \sum_{j=i}^n \delta_j\right),
$$
$$
F^V(\zz)=
B_i^Fe^{2\pi\i( x_i + ... +x_n)}F^V\left(\zz + \tau(\sum_{j=i}^n \delta_j)\right).
$$
Let denote by $\gamma_n : \on{B}_{1,n} \to \on{exp}(\hat{{\t}}_{1,n})\rtimes \mathfrak{S}_{n}$ 
the unique morphism induced by the solution $F^{(n)}(\zz)$ and, 
for all $1\leq i \leq n$, denote $A_i$ for the class of the projection of the path 
$[0,1]\ni t\mapsto (\zz_0 + t\sum_{j=i}^n \delta_j)$, 
$B_i$ for the class of the projection of $[0,1]\ni t \mapsto 
(\zz_0 + t\tau \sum_{j=i}^n \delta_j)$. The paths $A_i, B_i$ are generators 
of $\on{B}_{1,n}$.

Let us also denote $\tilde A^{1}_{KZB} := \gamma_1(A_1)$, $\tilde B_{KZB}^{1} :=
 \gamma_1(B_1)$  $\tilde A_{KZB}^{1,2} := \gamma_2(A_2)$, $\tilde B_{KZB}^{1,2} :=
  \gamma_2(B_2)$ and denote by $ \varphi_{KZ}$ the KZ associator. 
It is then clear that 
$$
\tilde Z^{1,\emptyset}=1, \quad \tilde Z^{\emptyset,1}=Z^1.
$$
for $ Z$ any of the letters $\tilde A_{KZB},\tilde B_{KZB}$. Then, our presentation 
of $\on{B}_{1,n}$ implies that we have relation $A_{3}^{-1} A_{2}= 
\sigma_{1}A_{2}^{-1}\sigma_{1}A_1$ and 
$B_{3}^{-1}B_{2} = \sigma_{1}B_{2}^{-1}\sigma_{1}B_1$. 
Then the image by $\gamma_3$ of these relations yield 
\begin{equation}\label{eqn:D1}
  Z^{12, 3}Z^{123}  =  
  \varphi_{KZ}^{1, 2, 3}   Z^{1, 23}(\varphi_{KZ}^{1, 2, 3})^{- 1} e^{-\pi\i t_{12}}
  \varphi_{KZ}^{2, 1, 3} Z^{2, 13}  (\varphi_{KZ}^{2, 1, 3})^{- 1} e^{-\pi\i t_{12}}
\end{equation}
for $ Z$ any of the letters $\tilde A_{KZB},\tilde B_{KZB}$. 
Next, the image by $\gamma_{3}$
of $(A^{-1}_{3},B^{-1}_{3}B_{2})= P_{23}$ then gives 
\begin{equation} \label{eqnE1}
((\tilde A_{KZB}^{12,3})^{-1},(\tilde B_{KZB}^{12,3})^{-1}
\varphi_{KZ}^{-1}\tilde B_{KZB}^{1,23}\varphi_{KZ}) =
 \varphi_{KZ} e^{2\pi\i t_{23}}(\varphi_{KZ})^{-1}.
\end{equation}
Finally,  the image by $\gamma_{2}$
of $(A_{2},B_2^{-1})= P_{12}$ then gives 
$$
e^{\mu t_{12}} = \left(\tilde A_{KZB}^{1,2} ,(\tilde B_{KZB}^{1,2})^{-1}
 \right).  
$$
By Proposition \ref{Pres1:1}, we obtain that $(2\pi\i, \varphi_{KZ},  A_{KZB},  B_{KZB})$, 
where $A_{KZB}=\gamma_2(X_1),B_{KZB}=\gamma_2(Y_1)$ satisfy relations 
\eqref{def:1:ass:0}, \eqref{def:1:ass:1}, \eqref{def:1:ass:2} and \eqref{def:1:ass:3} so that it is in $\on{Ass}_1(\C)$.
\end{proof}
We then get a surjective map of torsors 
$$
(\widehat{\GT}_1(\kk),\tilde{\Ell}(\kk),\GRT_{1}(\kk))\longrightarrow 
(\widehat{\GT}_{e\ell\ell}(\kk),\Ell(\kk),\GRT_{e\ell\ell}(\kk)).
$$
By functoriality and by the fact that the operadic pointing both in $\PaB_1$ and 
$\PaB_{e\ell\ell}$ have been chosen to be the unit, we retrieve maps of torsors  
$$
(\widehat{\GT}(\kk),\Ass(\kk),\GRT(\kk))\to
(\widehat{\GT}_1(\kk),\tilde{\Ell}(\kk),\GRT_{1}(\kk))\to (\widehat{\GT}_{e\ell\ell}(\kk),
\Ell(\kk),\GRT_{e\ell\ell}(\kk)).
$$
The composite of these maps is an operadic version of the map first 
constructed in \cite{En2}.


\begin{thebibliography}{EnGh}

\bibitem{KVg} A.~Alekseev, N.~Kawazumi, Y.~Kuno \& F.~Naef,
 \textit{The Goldman--Turaev Lie bialgebra and the Kashiwara--Vergne problem in higher genera}, 
preprint available at \url{https://arxiv.org/abs/1804.09566}, (2018).

\bibitem{BN}
D.~Bar-Natan, 
\textit{On associators and the Grothendieck--Teichm\"uller Group I},
Selecta Mathematica New Series \textbf{4} (1998) 183--212. 

\bibitem{Bell} P.~Bellingeri, 
\textit{On presentations of surface braid groups}
J. Algebra 274, \textbf{2}, (2004) 543-563.

\bibitem{BeG} P.~Bellingeri \& S.~Gervais, 
\textit{Surface framed braids}
Geometricae Dedicata 159, \textbf{1}, (2012) 51--69.

\bibitem{BZ} R.~Bezrukavnikov, \textit{Koszul DG-algebras arising from configuration
spaces}, Geom. and Funct. Analysis, 4, \textbf{2} (1992), 119--135. 

\bibitem{Bir}
J.~Birman, 
\textit{On braid groups}, 
Commun. Pure Appl. Math. \textbf{12} (1969), 41--79. 

\bibitem{Bi2} J. Birman, \textit{Mapping class groups
and their relationship to braid groups,}
Commun. Pure Appl. Math. \textbf{12} (1969), 213--38.

\bibitem{Ho2} P.~Boavida de Brito, G.~Horel \& M.~Robertson
\textit{Operads of genus zero curves and the Grothendieck--Teichm{\"u}ller group},
Geometric Topology \textbf{23} (2019) 299-346.

\bibitem{CEE}
D.~Calaque, B.~Enriquez \& P.~Etingof, 
\textit{Universal KZB equations: the elliptic case}, 
Algebra, Arithmetic and Geometry Vol I: in honor of Yu. I. 
Manin, Progress in Mathematics \textbf{269} (2010), 165--266. 

\bibitem{CG}
D.~Calaque \& M.~Gonzalez, 
\textit{On the universal ellipsitomic KZB connection}, 
\url{https://arxiv.org/abs/1908.03887}, (2019).

\bibitem{CaGo2}
D.~Calaque \& M.~Gonzalez, 
\textit{Ellipsitomic associators}, 
To appear soon.

\bibitem{CaGo3}
D.~Calaque \& M.~Gonzalez, 
\textit{A moperadic approach to cyclotomic associators}, 
\url{https://arxiv.org/abs/2004.00572}.

\bibitem{CIW}
R.~Campos \& N.~Idrissi \& T.~Willwacher, 
\textit{Configuration spaces of surfaces}, 
\url{https://arxiv.org/abs/1911.12281}, (2019).

\bibitem{DrGal}
V.~Drinfeld, 
\textit{On quasitriangular quasi-Hopf algebras and a group closely connected with 
$\on{Gal}(\bar\Q/\Q)$}, 
Leningrad Math. J. \textbf{2} (1991), 829-860.

\bibitem{En}
B.~Enriquez, 
\textit{Quasi-reflection algebras and cyclotomic associators}, 
Selecta Mathematica (NS) \textbf{13} (2008), no. 3, 391--463. 

\bibitem{En2}
B.~Enriquez
\textit{ Elliptic associators}, 
Selecta Mathematica (NS), \textbf{20}, \textbf{2} (2014), 491--584. 

\bibitem{En4}
B.~Enriquez, 
\textit{Flat connections on configuration spaces and formality of braid groups of surfaces}, 
Advances in Mathematics, \textbf{252} (2014) 204–226. 

\bibitem{FreMod}
B.~Fresse, 
\textit{Modules over operads and functors}, 
Lecture Notes in Mathematics, Springer, \textbf{1967} (2009). 

\bibitem{Fresse}
B.~Fresse, 
\textit{Homotopy of operads and Grothendieck--Teichmüller groups: 
Part 1: The algebraic theory and its topological background}, 
Mathematical Surveys and Monographs, American Mathematical 
Society, \textbf{217} (2017); 532 pp. 

\bibitem{Fresse2}
B.~Fresse, 
\textit{Homotopy of operads and Grothendieck--Teichmüller groups: 
Part 2: The applications of (rational) homotopy theory methods}, 
Mathematical Surveys and Monographs, American Mathematical 
Society, \textbf{217} (2017); 704 pp. 

\bibitem{FMP}
W.~Fulton \& R.~MacPherson, 
\textit{A compactification of configuration spaces}, 
Annals of Mathematics, \textbf{139} (1994), 183-225. 

\bibitem{GEsq} A.~Grothendieck, \textit{Esquisse d'un programme}, 
London Math. Soc. Lecture Note Ser., \textbf{242},  Geometric Galois actions, 1, 
5--48, Cambridge Univ. Press, Cambridge (1997). 

\bibitem{EH}
E.~Hoefel, 
\textit{Explicit Homotopy equivalences between some operads}, 
preprint available at \url{https://arxiv.org/abs/1110.3116} (2011). 

\bibitem{Horel} G.~Horel, 
\textit{Profinite completion of operads and the Grothendieck--Teichm{\"u}ller group},
Advances in Math. \textbf{321} (2017), pp. 326–390.

\bibitem{Kassel}C.~Kassel, 
\tmtextit{Quantum
  groups}, Springer Science \& Business Media, \textbf{ 155}
  (2012).

\bibitem{KS}
H.~Ko \& L.~Smolinsky, 
\textit{The framed braid group and 3-manifolds,} 
 Proc. Amer. Math. Soc. \textbf{115} (1992) 541–551.

\bibitem{Wahl}
N.~Wahl,\textit{ Ribbon braids and related operads}, PhD thesis, Oxford University, (2001).

\end{thebibliography}
\end{document}